\newtheorem{theorem}{Theorem}[section]
\newtheorem{proposition}[theorem]{Proposition}
\newtheorem{corollary}[theorem]{Corollary}
\newtheorem{question}[theorem]{Question}
\theoremstyle{definition}
\newtheorem{definition}[theorem]{Definition}
\newtheorem{examples}[theorem]{Example}
\theoremstyle{remark}
\newtheorem{remark}[theorem]{Remark}
\numberwithin{equation}{section}
\renewcommand{\epsilon}{\varepsilon}
\renewcommand{\phi}{\varphi}
\begin{document}
	
	{
		\title{Simplifying Weinstein Morse functions}
		\author{Oleg Lazarev}
		\address{Oleg Lazarev}
		\address{Columbia University}
		\email{olazarev@math.columbia.edu}  
	} 
	
	\maketitle
	\begin{abstract}
		We prove that the minimum number of critical points of a Weinstein Morse function on a Weinstein domain of dimension at least six is at most two more than the minimum number of critical points of a smooth Morse function on that domain; if the domain has non-zero middle-dimensional homology, these two numbers agree. There is also an upper bound on the number of gradient trajectories between critical points in smoothly trivial Weinstein cobordisms. As an application, we show that the number of generators for the Grothendieck group of the wrapped Fukaya category 
		is at most the number of generators for singular cohomology and hence vanishes for any Weinstein ball.  We also give a topological obstruction to the existence of finite-dimensional representations of the Chekanov-Eliashberg DGA of Legendrian spheres.  		
	\end{abstract}

	\section{Introduction}
	
	Weinstein domains are exact symplectic manifolds equipped with Morse functions compatible with their symplectic structures. These domains encompass a large class of symplectic manifolds, e.g. cotangent bundles, and are closely related to Stein manifolds in complex geometry \cite{CE12}. The Weinstein Morse function gives a symplectic handle-body presentation of the domain and allows one to study its symplectic geometry  via Legendrian knot theory. 
	This handle-body presentation is not unique and, like a smooth handlebody presentation, a Weinstein handle-body presentation can be modified by a series of moves, or   \textit{Weinstein homotopy}, that preserve the symplectic structure of the ambient domain; see Section \ref{sec: background}. In this paper, we study how these moves can be used to simplify an arbitrary Weinstein presentation.

	Abouzaid and Seidel \cite{Abouzaid_Seidel} introduced the \textit{complexity} $WCrit(W)$ of a Weinstein structure $W$ as the minimal number of critical points of a Weinstein Morse function on $W$, up to Weinstein homotopy. The corresponding notion for Stein domains was
	introduced by Eliashberg \cite{Eliashberg_plurisubharmonic_func}.
	Complexity is tautologically a Weinstein homotopy invariant. 
	The analog of $WCrit$ in the smooth setting is $Crit(M)$, the minimal number of critical points of \textit{any} Morse function on a smooth manifold $M$. This is a classical invariant of smooth manifolds and we will study the relationship between $WCrit(W)$ and $Crit(W)$ as a way of investigating the difference between symplectic and smooth topology and the corresponding handle-body moves. 
	
	We first recall some results about $Crit(M)$. A priori $Crit(M)$ is just a smooth invariant of $M$. Morse proved that there is a lower bound for $Crit(M)$ in terms of the integral homology $H_*(M;\mathbb{Z})$. 
	Smale \cite{smale_structure_manifolds} showed in the proof of the h-cobordism theorem that if $M^n$ is simply-connected and $n \ge 6$, then this lower bound is in fact sharp. More precisely, it is possible to simplify an arbitrary Morse function on $M^n$ to another Morse function whose number of critical points agrees with the homological lower bound. So in this case, $Crit(M)$ is actually a homotopy invariant of $M^{n}$. To simplify an arbitrary Morse function, Smale uses 
	certain moves called handle-slides and the Whitney trick, which requires $M^n$ to be simply-connected and $n \ge 6$. In the non-simply-connected case, $Crit(M)$ is not a homotopy invariant. By the s-cobordism theorem, it depends on the choice of an element of the Whitehead group $Wh(\pi_1(M))$. 
	As usual, the situation is different and more complicated in dimension 4. For example, it is unknown whether $Crit(S^4) = 2$ for any smooth structure on $S^4$; by Cerf's theorem \cite{cerf_diff_3sphere}, this question is equivalent to the smooth 4-dimensional Poincare conjecture. 
	
	In this paper, we will study how much of the simplification of smooth Morse functions can be done in the Weinstein setting. Since any Weinstein Morse function is a smooth Morse function, we have the inequality $WCrit(W) \ge Crit(W)$ and Eliashberg \cite{Eliashberg_plurisubharmonic_func} asked whether there are examples where $WCrit(W)$ and $Crit(W)$ differ.
	As first shown by Seidel and Smith \cite{Seidel_Smith_ramanujam_surface}, such examples do exist. For example, $Crit(B^{2n}) =1$ but any Weinstein structure $\Sigma^{2n}$ on $B^{2n}$ that is not symplectomorphic to (the completion of) $B^{2n}_{std}$ must have $WCrit(\Sigma^{2n}) \ge 2$; see Corollary 11.27 of \cite{CE12} or the h-principle for subcritical Weinstein domains.  In fact,  $WCrit(\Sigma^{2n}) \ge 3$ since the Euler characteristic of $B^{2n}$ is $1$. Seidel and Smith constructed such an exotic $\Sigma^{2n}$ and distinguished it from $B^{2n}_{std}$ by the presence of a Floer-theoretically essential Lagrangian torus. 
	Hence the inequality $WCrit(\Sigma) \ge Crit(\Sigma) + 2$ depends crucially on J-holomorphic curve type invariants. From a Weinstein homotopy point of view, $WCrit$ and $Crit$ differ because the Whitney trick, the key part of Smale's proof of the h-cobordism theorem, does not generally work in the symplectic setting; more precisely, smoothly isotopic Legendrian submanifolds are not necessarily Legendrian isotopic.
	
	Given that $Crit, WCrit$ can indeed be different, it is natural to ask how big this difference can be.  We first note that for domains of dimension at least six, there are infinitely many different Weinstein structures in the same \textit{almost} Weinstein class
	\cite{Abouzaid_Seidel, CE12, MM}. So in principle, $WCrit(W)$ can be arbitrarily larger than $Crit(W)$. The first construction of infinitely many exotic Weinstein structures is due to McLean \cite{MM}. He constructed a single exotic ball $\Sigma^{2n}_{1}$ and then 
	showed that $\Sigma^{2n}_{k}:= \natural_{i=1}^k \Sigma^{2n}_{1}$, the boundary connected sum of k copies of  $\Sigma^{2n}_{1}$, are pair-wise non-symplectomorphic, distinguished by a J-holomorphic curve invariant called symplectic homology. In particular, $\Sigma^{2n}_{k}$ has a natural Weinstein presentation with at least $4k-1$ handles ($3k$ handles for $\coprod_{i=1}^k \Sigma_{1}^{2n}$ and $k-1$ index 1 handles). So it was not clear whether these examples have bounded complexity. Later  
	Abouzaid and Seidel \cite{Abouzaid_Seidel} constructed infinitely many exotic Weinstein structures that do have bounded complexity. 
	
	On the other hand, recent work has shown that certain Weinstein structures have minimal complexity, i.e. $WCrit(W) = Crit(W)$. 
	Cieliebak and Eliashberg \cite{CE12} proved that \textit{flexible} Weinstein structures, which satisfy an h-principle that reduces their symplectic topology to the underlying algebraic topology, have minimal complexity. Later Eliashberg, Ganatra, and the author \cite{EGL} constructed infinitely many examples of exotic (non-flexible) Weinstein structures on $T^*S^n$ and showed that they also have minimal complexity. We will show that minimal complexity holds quite generally. 
	
	\subsection{Almost minimal Weinstein presentations}\label{ssec: almost_minimal_subdomains}
	The above examples due to Seidel-Smith and McLean show that in general
	$WCrit(W) \ge Crit(W) +2$. This lower bound comes from J-holomorphic curve invariants (and some mild use of h-principles). 
	Our main result shows that this is the only constraint on $WCrit$. In the following,  we say a smooth domain $W^{2n}$ (with the homotopy type of an n-dim CW complex) is \textit{smoothly critical} if every smooth proper Morse function has a critical point of index $n$; for example, this holds if $H_n(W; \mathbb{Z}) \ne 0$ or $H_{n-1}(W; \mathbb{Z})$ has torsion elements. A smooth domain $W^{2n}$ is \textit{smoothly subcritical} if $W^{2n}$ admits a smooth Morse function all of whose critical points have index strictly less than $n$. 
	A (smoothly subcritical) Weinstein domain is \textit{Weinstein subcritical} if it admits a Weinstein Morse function all of whose  critical points have index strictly less than $n$. Subcritical Weinstein domains are flexible and hence have minimal complexity as mentioned above \cite{CE12}; see Section \ref{subsection: loose} for details. 
	\begin{theorem}\label{thm: fewcrit}
		If $W^{2n}, n \ge 3,$ is a Weinstein domain, then $WCrit(W) \le Crit(W) +2$. Furthermore, if $W$ is smoothly critical, then $WCrit(W) = Crit(W)$. If $W$ is smoothly subcritical and $\pi_1(W) = 0$, then $WCrit(W) = Crit(W)$ if and only if $W$ is a subcritical Weinstein domain; otherwise, $WCrit(W) = Crit(W) + 2$.  
	\end{theorem}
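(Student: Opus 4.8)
The plan is to reduce the symplectic question to a smooth Morse-theoretic count and then to apply the flexibility h-principle to realize the smooth simplification Weinstein-ly, paying a controlled price of at most two extra critical points. I would begin by fixing an arbitrary Weinstein Morse function $f$ on $W^{2n}$ and recalling that, because $n \ge 3$, Cieliebak--Eliashberg's existence theory lets us freely manipulate the subcritical handles (indices $<n$) up to Weinstein homotopy; the only genuinely symplectic rigidity sits in the critical ($=n$) handles, whose attaching data are Legendrian spheres. To prove the master inequality $WCrit(W) \le Crit(W)+2$, I would take a smooth Morse function realizing $Crit(W)$ critical points and try to upgrade it to a Weinstein Morse function. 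The subcritical skeleton can always be realized symplectically by flexibility, so the obstruction is realizing the top-index handles with Legendrian (rather than merely smooth) attaching spheres. The key maneuver is to make the Weinstein structure \emph{flexible}: by stabilizing we can arrange the attaching Legendrians to be loose, and a loose Weinstein structure satisfies an h-principle so that its Weinstein Morse-theoretic count matches the smooth count. The ``$+2$'' accounts precisely for the stabilization: introducing loose behavior (or, equivalently, creating a canceling index-$(n{-}1)$/index-$n$ pair to free up the Whitney trick symplectically) costs at most one extra pair of critical points, i.e. two critical points.

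For the second assertion, suppose $W$ is smoothly critical, so every smooth Morse function has at least one index-$n$ critical point; I claim equality $WCrit(W)=Crit(W)$ holds. Here the strategy is to run Smale's smooth simplification but keep it compatible with the Weinstein structure. Since there is an unavoidable top-index handle, I can use it as a reservoir for the stabilization move: the extra loose-making modification can be absorbed into an existing critical handle rather than requiring a brand-new canceling pair. Concretely, I would take a smoothly minimal Morse function, flexibilize in a neighborhood of one index-$n$ handle, and then cancel any genuinely redundant handles via the Weinstein handle-slide and cancellation moves available once looseness is present. The net effect is that the ``$+2$'' penalty from the general bound is paid by recycling an already-present critical point, giving $WCrit(W)\le Crit(W)$; combined with the tautological $WCrit \ge Crit$, this yields equality.

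For the third assertion, assume $W$ is smoothly subcritical and simply connected. The forward implication is essentially a definition-chase: if $WCrit(W)=Crit(W)$, then the minimal Weinstein Morse function has the same (index-by-index) handle count as a smoothly subcritical one, and since smooth subcriticality forces all indices to be $<n$, the Weinstein function itself has no index-$n$ handles, i.e. $W$ is Weinstein subcritical. The reverse implication is the cited Cieliebak--Eliashberg result that subcritical (hence flexible) domains have minimal complexity. The ``otherwise $=Crit(W)+2$'' clause is the most delicate point and, I expect, the main obstacle: the upper bound $\le Crit(W)+2$ is the master inequality, but the matching \emph{lower} bound $WCrit(W)\ge Crit(W)+2$ must be extracted. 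Since $W$ is smoothly subcritical, any genuine index-$n$ handle in a Weinstein presentation is homologically/smoothly trivial, so it comes in a canceling pair; hence if $W$ is not Weinstein subcritical, the minimal Weinstein function must contain at least one such index-$(n{-}1)$/index-$n$ canceling pair beyond the smoothly minimal count, forcing exactly two extra critical points. Making this rigorous requires ruling out the possibility of a single extra critical point and controlling the Euler-characteristic parity; I would argue that critical points can only be created or destroyed in canceling pairs under Weinstein homotopy of a fixed smooth type, so the defect $WCrit-Crit$ is even, and simple connectivity plus the subcritical hypothesis pin it to exactly $2$.
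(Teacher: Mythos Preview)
Your proposal has the right shape---introduce a cancelling $(n{-}1,n)$ pair, make things loose, use flexibility to match the smooth count---but there is a genuine gap in the mechanism. You write that ``by stabilizing we can arrange the attaching Legendrians to be loose, and a loose Weinstein structure satisfies an h-principle so that its Weinstein Morse-theoretic count matches the smooth count.'' The problem is that stabilizing the attaching Legendrians \emph{changes the Weinstein domain}: you end up proving $WCrit(W_{flex}) \le Crit(W)+2$, not $WCrit(W) \le Crit(W)+2$. You never explain why your ``loosened'' domain is still Weinstein homotopic to the original $W$, and indeed for a generic exotic $W$ it will not be. The parenthetical about ``creating a cancelling index-$(n{-}1)$/index-$n$ pair'' is pointing at the right object, but creating that pair does nothing by itself---you must explain how its presence lets you loosen the \emph{other} attaching spheres without altering the ambient Weinstein homotopy type.

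The missing ingredient is a specific handle-slide. The paper introduces a cancelling pair $H^{n-1}\cup H^n_{\Lambda_0}$ and then \emph{handle-slides} each original attaching Legendrian $\Lambda_i$ over $\Lambda_0$ using a carefully chosen Darboux chart (the Casals--Murphy front-projection model). The point is that handle-slides are genuine Weinstein homotopies, so the domain is unchanged, yet with the right chart the slid Legendrians $h_{\Lambda_0}(\Lambda_i)$ acquire a visible loose chart. After a further Whitney-trick isotopy (realized Legendrianly, since the link is now loose) one arranges all the $h_{\Lambda_0}(\Lambda_i)$ to miss the belt sphere of $H^{n-1}$, giving the decomposition $W \simeq W_{flex} \cup (H^{n-1}\cup H^n_{\phi(\Lambda_0)})$; the ``$+2$'' is exactly this residual pair. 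Your sketch for the smoothly critical case (``absorb the stabilization into an existing index-$n$ handle'') is the right intuition, and the paper implements it by sliding the extra $\Lambda_1$ over a pre-existing loose $\Lambda_0$ in $W_{flex}$ so that the resulting $h_{\Lambda_0}(\Lambda_1)$ is loose and symplectically cancels $H^{n-1}$; but again the handle-slide is doing the work, not ``stabilization.'' Your treatment of the smoothly subcritical clause (Smale gives the index-by-index count when $\pi_1=0$, so equality forces no index-$n$ handles; Euler characteristic forces parity) is essentially correct and matches the paper.
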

	More precisely, let $WCrit_k(W^{2n})$ denote the minimum number of index $k$ critical points of a Weinstein Morse function on $W^{2n}$; let $Crit_k(W)$ denote the same for a smooth Morse function. Then the proof of Theorem \ref{thm: fewcrit} actually shows that $WCrit_k(W^{2n}) = Crit_k(W^{2n})$ for $k \le n-2$; if $Crit_n(W) \ne 0$, then either $WCrit_{n-1}(W^{2n}) = Crit_{n-1}(W^{2n})$ and $WCrit_n(W^{2n}) = Crit_n(W^{2n})$ or 
	$WCrit_{n-1}(W^{2n}) = Crit_{n-1}(W^{2n}) +1$ 
	and  $WCrit_n(W^{2n}) = 1$. The second case can only happen when $Crit_n(W^{2n}) = 0$, i.e. $W$ is smoothly subcritical. So we always have $WCrit_n(W^{2n}) \le \max\{1, Crit_n(W^{2n})\}$.

	We now explain the assumption that $W^{2n}$ is simply-connected in the smoothly subcritical part of Theorem \ref{thm: fewcrit}; the rest of our results do not require such an assumption. The point is that if $W$ is smoothly subcritical and simply-connected, all of the critical points of any minimizing smooth Morse function necessarily have index less than $n$. So if $WCrit(W) = Crit(W)$, the minimizing Weinstein Morse function must also have all critical points with index less than $n$ and so $W$ is Weinstein subcritical. However this is not true in the non-simply-connected case and it is possible to have $WCrit(W) = Crit(W)$ but for the minimal Weinstein decomposition to still have $n$ handles and not be Weinstein subcritical. 
	For example, we can start with a Weinstein subcritical domain $W_{sub}$ and attach a non-trivial h-cobordism $C$ with handles of index $n-1, n$ so that $WCrit(W_{sub} \cup C) = Crit(W_{sub}\cup C)$; then we can smoothly trade those handles to handles of index $2,3$, which shows that $W_{sub} \cup C$ is smoothly subcritical but not necessarily Weinstein subcritical, e.g. \cite{MM}.

	Now we give some examples illustrating Theorem \ref{thm: fewcrit}.
	\begin{examples}\label{ex: wcrit_cotangent}
		If $M^n, n \ge 3$, is a closed smooth manifold, then $WCrit(T^*M) = Crit(T^*M) \le Crit(M)$ for \textit{any} Weinstein structure on $T^*M$ since it is smoothly critical; if $n\ge 6$ and $\pi_1(M) = 0$, then the second inequality is also an equality. In particular, any Weinstein structure on $T^*S^n$ has $WCrit(T^*S^n) = 2$.
	\end{examples}
	\begin{examples}\label{ex: ball}
		Any Weinstein ball $\Sigma^{2n}$, which is smoothly subcritical with $Crit(\Sigma^{2n})=1$, has either $WCrit(\Sigma^{2n}) = 1$ or $3$. Since $\pi_1(\Sigma^{2n}) = 0$, the structure is Weinstein homotopic to the standard structure $B^{2n}_{std}$ if and only if $WCrit(\Sigma^{2n}) = 1$. In particular, McLean's exotic structures $\Sigma_k^{2n}$, which have natural presentations with at least $4k-1$ critical points, can be Weinstein homotoped to  presentations with just $3$ critical points, corresponding to handles of index $0, n-1$, and $n$. They are all non-standard structures and so $WCrit(\Sigma^{2n}_k) = 3$. 
	\end{examples}

	Our proof of Theorem \ref{thm: fewcrit} relies on Murphy's h-principle for loose Legendrians \cite{Murphy11} (and its consequences for flexible domains) as well as the smooth Whitney trick. Both of these results hold only for $n \ge 3$, hence our restriction on dimension. We do not know whether Theorem \ref{thm: fewcrit} holds if $n = 2$.
	\begin{question}
		Is $WCrit(W^4) \le Crit(W^4) + 2$ for any Weinstein domain $W^4$?
	\end{question}
	We point out that many smooth 4-dimensional domains have only finitely many Weinstein structures, e.g. $B^{4}, T^*T^2$ 
	\cite{CE12, eliashberg_3torus, wendl_fillings}, and it seems unknown whether any 4-dimensional domain admits infinitely many Weinstein structures in the same formal class (unlike in high dimensions when this is always true). A general finiteness result would automatically give a bound of the form $WCrit(W^4) \le Crit(W^4) + C$ for some constant $C$ possibly depending on the diffeomorphism type of $W^4$.

	\subsection{Flexible subdomains}\label{ssec: flex_subdomains}
	
	Our main result Theorem \ref{thm: fewcrit} essentially follows from the following theorem. For a Weinstein domain $W^{2n}, n \ge 3$, let $W_{flex}^{2n}$ be the unique flexible Weinstein structure almost symplectomorphic to $W^{2n}$; see Section \ref{subsection: loose}. 
	\begin{theorem}\label{thm: flexible_subdomain}
		Any Weinstein domain $W^{2n}, n \ge 3$, can be Weinstein homotoped to  $W_{flex}^{2n} \cup C^{2n}$, where $C^{2n}$ is a smoothly trivial Weinstein cobordism with two critical points of index $n-1, n$.  
	\end{theorem}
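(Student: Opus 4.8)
The plan is to reduce the statement to a handle-by-handle comparison of $W$ with its flexibilization and then to mediate the difference by a single cancelling pair. First I would Weinstein homotope $W$ to a handle decomposition with all handles of index at most $n$, and let $W_0 \subset W$ denote the subcritical skeleton consisting of the handles of index $\le n-1$, so that $W = W_0 \cup \bigcup_{i=1}^m H^n_{\Lambda_i}$, the critical handles $H^n_{\Lambda_i}$ being attached along Legendrian spheres $\Lambda_i \subset \partial W_0$. Since $W_0$ is a subcritical Weinstein domain it is already flexible (see Section \ref{subsection: loose}), so no modification of the subcritical handles is needed. Writing $\Lambda_i^{loose} \subset \partial W_0$ for a loose Legendrian formally Legendrian isotopic to $\Lambda_i$, Murphy's h-principle \cite{Murphy11} together with the uniqueness of flexible Weinstein structures in a fixed formal class \cite{CE12} identifies $W_0 \cup \bigcup_i H^n_{\Lambda_i^{loose}}$, up to Weinstein homotopy, with the flexibilization $W_{flex}$. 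Thus it suffices to exhibit a single smoothly trivial cobordism $C$ with one handle of index $n-1$ and one of index $n$ such that $W_{flex} \cup C$ is Weinstein homotopic to $W$.

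The core of the argument is the construction of $C$ together with the handle slides that convert the loose attaching spheres back into the original ones. I would attach a single subcritical $(n-1)$-handle $H^{n-1}$ to $\partial W_{flex}$ along an isotropic sphere whose cocore region serves as a common ``flexibility reservoir'': after Weinstein homotopy each $\Lambda_i^{loose}$ can be slid so as to pass through the cocore of $H^{n-1}$, and this single subcritical handle supplies a loose chart to all the top handles simultaneously. Capping off with the geometrically cancelling $n$-handle $H^n$ then produces $C = H^{n-1} \cup H^n$, which is smoothly trivial by construction since its two handles form a smoothly cancelling pair. The role of $C$ is precisely to mediate the passage from the loose representatives $\Lambda_i^{loose}$ to the original, possibly non-loose, $\Lambda_i$: this step cannot be performed by a Legendrian isotopy alone (that is the failure of the Legendrian Whitney trick), but it can be realized by handle slides across the cancelling pair, using that the $\Lambda_i^{loose}$ are loose so that Murphy's h-principle governs their isotopies.

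To organize these slides I would invoke the smooth Whitney trick, valid for $n \ge 3$, to arrange that each top handle meets the belt sphere of $H^{n-1}$ in a single transverse point and that the whole rearrangement can be carried out with the attaching spheres kept disjoint; this is also the step that fails in dimension four and accounts for the hypothesis $n \ge 3$ (the same hypothesis required by Murphy's h-principle). Keeping track of the formal data throughout, the Legendrians produced after the slides lie in the formal classes of the $\Lambda_i$, so the resulting domain is $W_0 \cup \bigcup_i H^n_{\Lambda_i} = W$, as desired.

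I expect the main obstacle to be concentrating the flexibilization into a single cancelling pair rather than one pair per top handle. Naively each $\Lambda_i$ would require its own stabilization, and hence its own cancelling pair; the content of the argument is that one subcritical handle, shared by all the $\Lambda_i$ after suitable slides, simultaneously creates the needed loose charts, and that this can be arranged while preserving the formal Legendrian isotopy classes so that Murphy's h-principle and the uniqueness of $W_{flex}$ both apply. Verifying that the slides are compatible with the symplectic structure, and that the single $H^{n-1}$ indeed supplies loose charts for every top handle at once, is the delicate point on which the whole reduction rests.
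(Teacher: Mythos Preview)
Your proposal has the right ingredients listed (cancelling pair, handle slides, Whitney trick, loose h-principle) but the logic runs in the wrong direction and the central mechanism is missing.

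You start from $W_{flex}$ and try to recover $W$ by attaching a cancelling pair $H^{n-1}\cup H^n$ and sliding the loose attaching spheres $\Lambda_i^{loose}$ ``back'' to the original $\Lambda_i$. There is no such mechanism. Handle-sliding a loose Legendrian over another Legendrian produces a Legendrian isotopic to the original one \emph{in the surgered manifold}; it does not let you manufacture a prescribed non-loose isotopy class. Your final sentence (``the Legendrians produced after the slides lie in the formal classes of the $\Lambda_i$, so the resulting domain is $W$'') is exactly the step that fails: matching formal classes only determines the Legendrian when it is loose, and here you need the result to be the specific non-loose $\Lambda_i$. Moreover, if your capping $H^n$ is genuinely \emph{geometrically} cancelling with $H^{n-1}$ (attaching sphere meeting the belt sphere once), then the pair cancels symplectically as well, and $W_{flex}\cup C$ is just $W_{flex}$ again.

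The paper's argument runs the other way. Starting from $W=W_{sub}\cup H^n_{\Lambda_1}\cup\cdots\cup H^n_{\Lambda_k}$, one first creates a symplectically cancelling pair $H^{n-1}\cup H^n_{\Lambda_0}$ inside $W$ in a small Darboux ball. The key technical point, absent from your outline, is that sliding each $\Lambda_i$ over $\Lambda_0$ using a \emph{specific} local chart (after connect-summing with unknots arranged as in the paper's Figure~\ref{fig: twounknots}) produces a Legendrian with an explicit loose chart; doing this twice with opposite orientations yields $h^2_{\Lambda_0}(\Lambda_i)$, which is loose and has algebraic intersection zero with the belt sphere of $H^{n-1}$. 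Now the Whitney trick is used not to get intersection one, as you suggest, but to make the loose link $\coprod_i h^2_{\Lambda_0}(\Lambda_i)$ \emph{disjoint} from that belt sphere; because the link is loose, this smooth isotopy upgrades to a Legendrian isotopy $\phi$. The handles $H^n_{\phi(h^2_{\Lambda_0}(\Lambda_i))}$ can then be attached \emph{before} $H^{n-1}$, giving a flexible subdomain $W''$ with $W = W''\cup H^{n-1}\cup H^n_{\phi(\Lambda_0)}$. The last cobordism is smoothly trivial because $\phi(\Lambda_0)$ is smoothly isotopic to $\Lambda_0$, and $W''$ is $W_{flex}$ by the uniqueness h-principle. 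The symplectic content of $W$ is encoded in how complicated $\phi(\Lambda_0)$ has become, not in any prescribed slide of loose spheres.
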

	This result implies that the smooth topology and the symplectic topology can be separated in the sense that all the smooth topology can be put into a symplectically trivial (flexible) domain while all the symplectic topology can be put into a smoothly trivial cobordism, which is a smooth collar of the boundary of $W^{2n}$. 
	In particular, Theorem \ref{thm: flexible_subdomain} shows that $W_{flex}$ is a Weinstein subdomain of $W$. This extends previous work of Eliashberg and Murphy \cite{EM} who proved that $W_{flex}$ is a \textit{Liouville} subdomain of $W$, i.e. $W\backslash W_{flex}$ is an exact symplectic cobordism, perhaps without a compatible Weinstein Morse function. The decomposition in Theorem \ref{thm: flexible_subdomain} has several applications, which are explored in \cite{Lazarev_maximal, Lazarev_reg_lag}; for example, it is used to prove an existence h-principle for regular Lagrangians with boundary in arbitrary Weinstein domains and construct `maximal' Weinstein domains.
	
	Theorem \ref{thm: flexible_subdomain} immediately implies most of Theorem \ref{thm: fewcrit}. The presentation in Theorem \ref{thm: flexible_subdomain} shows that $WCrit(W) \le WCrit(W_{flex}) + 2$. Since flexible structures have minimal complexity \cite{CE12},  $WCrit(W_{flex}) = Crit(W)$. Combining these results, we get $WCrit(W) \le Crit(W) + 2$, the first claim in Theorem \ref{thm: fewcrit}. 
	The proof of the smoothly critical case of Theorem \ref{thm: fewcrit} is similar. Finally, we note that the existence of such a decomposition does not contradict the above mentioned fact that certain Weinstein domains require more than one generator for their wrapped Fukaya category. This is because the flexible co-cores of the flexible domain become non-flexible once the last handle is attached.

	Flexible Weinstein domains are defined only for $n \ge 3$. The analog of these domains for $n = 2$ are Weinstein domains whose index 2 handles are attached along stabilized Legendrians; we will call these \textit{stabilized domains}. However, neither stabilized Legendrians nor  stabilized domains satisfy an h-principle and so we do not know whether Theorem \ref{thm: fewcrit} or Corollary \ref{cor: generators_fukaya_cat} hold for $n = 2$. 
	However  versions of some of our results, like Theorem \ref{thm: flexible_subdomain}, Theorem \ref{thm: flexdomainhandle}, and Corollary \ref{cor: non_loose_link}, continue to hold for $n = 2$ if we replace flexible domains, loose Legendrians with these analogous domains, Legendrians respectively. For example, we have the following version of Theorem \ref{thm: flexible_subdomain}. 
	\begin{theorem}\label{thm: dim4_sh}
		Any Weinstein domain $W^4$ can be  Weinstein homotoped to $V^4 \cup H^2$, where $V^4$ is a stabilized domain that is simply homotopy equivalent to $W^4 \cup H^1$.
	\end{theorem}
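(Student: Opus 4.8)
The plan is to run the same construction that proves Theorem \ref{thm: flexible_subdomain}, but to stop exactly at the point where that argument invokes Murphy's h-principle. Since loose Legendrians and flexible domains are unavailable for $n = 2$, the output is only a \emph{stabilized} domain $V^4$ rather than a flexible one, and it can be controlled up to simple homotopy equivalence rather than up to diffeomorphism. This is precisely the part of the $n \ge 3$ argument that is purely handle-theoretic and does not use the classification of loose Legendrians, so in principle it should survive verbatim once ``flexible/loose'' is everywhere replaced by ``stabilized.''

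First I would fix a Weinstein Morse function on $W^4$, giving a handle decomposition with handles of index $0,1,2$, and let $\Lambda_1, \dots, \Lambda_m \subset \partial X$ be the attaching Legendrian knots of the index-$2$ handles, where $X$ is the subcritical (index $0,1$) part. Because inserting a geometrically cancelling pair of Weinstein handles is a Weinstein homotopy, I would add one auxiliary index-$1$ handle $H^1$ together with a compensating index-$2$ handle attached along a small Legendrian unknot that cancels it; this is the $H^1$ appearing in the statement, and the compensating $2$-handle is the one destined to become the final $H^2$. I would then stabilize each $\Lambda_i$ so that every index-$2$ handle of the modified domain is attached along a stabilized Legendrian, which by definition makes the part $V^4$ below the last handle a stabilized domain.

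The difficulty is that a single Legendrian stabilization lowers the Thurston--Bennequin invariant by one and hence changes the smooth framing of the corresponding $2$-handle, so naive stabilization changes the diffeomorphism type. The compensation step is to slide the stabilized attaching knots over the auxiliary $1$-handle $H^1$, using the extra $H_1$-class it provides to absorb the framing shifts, and to re-cancel against the compensating $2$-handle $H^2$ after the choice of its unknot's Thurston--Bennequin invariant is adjusted. Tracking the effect of these handle slides and of the cancelling pair on the cellular chain complex should show that $V^4$ has the cell structure of $W^4$ together with one additional $1$-cell, that $V^4 \cup H^2$ is Weinstein homotopic to $W^4$, and that the intervening moves are elementary expansions, collapses, and simple handle slides, so that $V^4$ is \emph{simply} homotopy equivalent to $W^4 \cup H^1$.

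I expect the main obstacle to be exactly this framing and Whitehead-torsion bookkeeping in the absence of an h-principle. In dimension four one cannot freely prescribe the smooth isotopy and framing data of a stabilized Legendrian in the way Murphy's theorem allows for loose Legendrians in higher dimensions, so the compensation for the framing changes introduced by stabilization must be carried out geometrically, via explicit slides over $H^1$ and the choice of $H^2$, rather than abstractly. Ensuring that these slides are \emph{simple}, i.e. contribute trivial Whitehead torsion, is what upgrades the conclusion from a bare homotopy equivalence to the simple homotopy equivalence between $V^4$ and $W^4 \cup H^1$, and verifying this is the technical heart of the argument.
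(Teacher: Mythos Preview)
Your high-level plan---rerun the construction from Theorem~\ref{thm: flexible_subdomain} and stop before invoking Murphy's h-principle---is exactly right, and is what the paper does. But your implementation of that plan deviates from the actual construction in a way that creates a genuine gap.

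The key mechanism in the proof of Theorem~\ref{thm: flexible_subdomain} is not ``stabilize each $\Lambda_i$ and then compensate.'' It is to handle-slide each $\Lambda_i$ \emph{twice over the auxiliary attaching knot $\Lambda_0$} of the cancelling $2$-handle (with opposite orientations), producing $h^2_{\Lambda_0}(\Lambda_i)$. The point is that a Legendrian handle-slide is already a Weinstein homotopy, so the ambient domain is preserved automatically and there is no framing discrepancy to repair; the zigzag (stabilization) appears as a byproduct of the explicit front-projection picture of the handle-slide. Your proposed route---first stabilize $\Lambda_i$ directly, lowering $tb$ by one, and then ``absorb the framing shifts'' by sliding over the $1$-handle $H^1$---does not work: sliding a $2$-handle attaching circle over a $1$-handle changes its homology class in $H_1$, not its contact framing relative to the Seifert framing, so it cannot undo the $tb$-drop. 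The entire framing bookkeeping you identify as ``the technical heart of the argument'' is a problem you have created by not using the handle-slide mechanism; in the paper's approach it simply does not arise.

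For the simple homotopy equivalence, the paper also takes a different and much shorter route than your proposed torsion bookkeeping. It observes that after crossing with $B^2$ the domain $V^4\times B^2$ is six-dimensional, and in that dimension the double-handle-slid curves $h^2_{\Lambda_0}(\Lambda_i)$ are visibly unknotted in the $B^6\cup H^1$ region and can be smoothly isotoped back to $\Lambda_i$. Hence $V^4\times B^2$ is diffeomorphic to $(W^4\cup H^1)\times B^2$, which immediately gives the simple homotopy equivalence $V^4\simeq_s W^4\cup H^1$ without any Whitehead-torsion computation. This dimension-raising trick also sidesteps the failure of the Whitney trick in the non-simply-connected region $B^4\cup H^1$.
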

    The notation $H^n_{\Lambda}$ denotes a Weinstein handle attached along an isotropic attaching sphere $\Lambda$
   and we write $H^n$ if we not specify the attaching sphere; see Section \ref{sec: background}. 
    We note that the stronger version of Theorem \ref{thm: flexible_subdomain} is false for $n =2$: in general, there may not exist a stabilized $V^4$ that is \textit{diffeomorphic} to $W^4 \cup H^1$. 
	For example, there is a unique Weinstein structure on $T^*T^2$ and it has non-vanishing symplectic homology \cite{eliashberg_3torus, wendl_fillings}; the same holds for $T^*T^2 \cup H^1$ \cite{CE12}.
	On the other hand, stabilized domains have vanishing symplectic homology and so  $T^*T^2 \cup H^1$ does not admit a stabilized Weinstein structure.

	Theorem \ref{thm: flexible_subdomain} shows that any Weinstein domain $W^{2n}, n \ge 3$, can be presented as a flexible domain $W_{flex}^{2n} \cup H^{n-1}$ plus a single critical handle. In fact, the proof of Theorem \ref{thm: flexdomainhandle} is a bit more explicit about the single extra handle.  
	\begin{corollary}\label{cor: non_loose_link}
		Every Weinstein domain $W^{2n}, n \ge 3$, can be Weinstein homotoped to a subcritical domain $V_{sub}$ with handles attached to the Legendrian link 
		$\Lambda_1 \coprod \cdots \coprod \Lambda_{k-1} \coprod  \Lambda_k \subset \partial V_{sub}$ such that $\Lambda_1 \coprod \cdots \coprod \Lambda_{k-1}$ is a loose link and $\Lambda_k$ is a loose Legendrian. 
	\end{corollary}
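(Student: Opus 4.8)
The plan is to read the statement off the refined form of Theorem~\ref{thm: flexible_subdomain} used in the proof of Theorem~\ref{thm: flexdomainhandle}, which presents $W$ as $(W_{flex}^{2n}\cup H^{n-1})\cup H^n_{\Lambda_k}$, combined with the definition of flexibility and a handle‑reordering argument. First I would unpack the two pieces. Since $W_{flex}^{2n}$ is flexible, it admits a presentation $W_{flex,sub}\cup(\text{critical handles along }\Lambda_1\coprod\cdots\coprod\Lambda_{k-1})$ in which $W_{flex,sub}$ is subcritical and $\Lambda_1\coprod\cdots\coprod\Lambda_{k-1}\subset\partial W_{flex,sub}$ is a loose link; this is exactly the defining property of a flexible structure. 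On the other hand, the explicit construction of the smoothly trivial cobordism $H^{n-1}\cup H^n_{\Lambda_k}$ in the proof of Theorem~\ref{thm: flexdomainhandle} produces the attaching sphere $\Lambda_k$ of its single critical handle as a loose Legendrian. So the only remaining task is to absorb the subcritical handle $H^{n-1}$ into the subcritical part to form $V_{sub}$, while reorganizing all the index $n$ handles into a single Legendrian link in $\partial V_{sub}$, since critical handles of the same index and with disjoint attaching spheres may be attached simultaneously.

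Second I would carry out the handle reordering on the composite presentation
\[
W = W_{flex,sub}\cup(\Lambda_1,\dots,\Lambda_{k-1}\ \text{critical})\cup H^{n-1}\cup H^n_{\Lambda_k}.
\]
The isotropic attaching sphere of $H^{n-1}$ is an $(n-2)$-sphere in the contact boundary of dimension $2n-1$, while the belt spheres of the critical handles $H^n_{\Lambda_i}$ are $(n-1)$-spheres; since $(n-2)+(n-1)-(2n-1)=-2<0$, a generic such isotropic sphere misses them and can be isotoped into the part of the boundary coming from $\partial W_{flex,sub}$. Attaching $H^{n-1}$ first then yields the subcritical domain $V_{sub}:=W_{flex,sub}\cup H^{n-1}$, all of whose handles have index at most $n-1$. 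Applying the same count to $\Lambda_k$, which is an $(n-1)$-sphere, gives $(n-1)+(n-1)-(2n-1)=-1<0$, so $\Lambda_k$ can likewise be pushed off the belt spheres of the $H^n_{\Lambda_i}$ and slid down into $\partial V_{sub}$. This realizes $W$ as $V_{sub}$ with critical handles attached along the Legendrian link $\Lambda_1\coprod\cdots\coprod\Lambda_{k-1}\coprod\Lambda_k\subset\partial V_{sub}$.

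Finally I would verify that the two looseness properties survive the rearrangement. Both the individual looseness of a Legendrian and the loose‑link property are invariant under Legendrian isotopy and persist under subcritical surgery performed away from the relevant loose charts, and all the motions above take place inside the contact boundary. Hence $\Lambda_1\coprod\cdots\coprod\Lambda_{k-1}$ remains a loose link in $\partial V_{sub}$ and $\Lambda_k$ remains loose. I expect the main obstacle to be precisely this last point: one must keep the loose charts of the $\Lambda_i$ and of $\Lambda_k$ intact and mutually \emph{disjoint} throughout the commutation of $H^{n-1}$ and the slide of $\Lambda_k$ into $\partial V_{sub}$, so as not to destroy the loose link inherited from $W_{flex}$ and, crucially, not to merge $\Lambda_k$'s loose chart with those of the $\Lambda_i$ (which would wrongly upgrade the full link $\Lambda_1\coprod\cdots\coprod\Lambda_k$ to a loose link and render $W$ flexible). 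Maintaining disjointness of the loose charts during the reordering, together with the explicit description of $\Lambda_k$ from the proof of Theorem~\ref{thm: flexdomainhandle} for its individual looseness, is what makes the argument go through.
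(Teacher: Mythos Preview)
Your overall plan is sound, but the crucial assertion that $\Lambda_k$ is loose is not justified, and your citation for it is wrong. You attribute this to ``the explicit construction of the smoothly trivial cobordism $H^{n-1}\cup H^n_{\Lambda_k}$ in the proof of Theorem~\ref{thm: flexdomainhandle}'', but Theorem~\ref{thm: flexdomainhandle} contains no $H^{n-1}$: there the single distinguished $n$-handle is attached along the Legendrian that every other Legendrian is slid \emph{over}, and that Legendrian stays fixed throughout and is never made loose. (You also have the dependency reversed: Theorem~\ref{thm: flexdomainhandle} is the simpler statement, proven first, and Theorem~\ref{thm: flexible_subdomain} is its refinement.) So as written, your argument does not establish the individual looseness of $\Lambda_k$, which is the whole content of the corollary beyond Theorem~\ref{thm: flexdomainhandle}.

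The paper supplies this missing looseness differently: it invokes the smoothly critical case of Theorem~\ref{thm: fewcrit}, where one pulls a loose attaching sphere $\Lambda_0$ out of $W_{flex}$ and performs one \emph{additional} handle-slide of the remaining $\Lambda_1$ over $\Lambda_0$, yielding $W = W_{flex}' \cup H^{n-1} \cup H^n_{\Lambda_0} \cup H^n_{h_{\Lambda_0}(\Lambda_1)}$. The handle-slide mechanism from Theorem~\ref{thm: flexdomainhandle} then makes $h_{\Lambda_0}(\Lambda_1)$ loose; the loose link consists of the attaching spheres of $W_{flex}'$ together with $\Lambda_0$, and the extra loose Legendrian is $h_{\Lambda_0}(\Lambda_1)$. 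All of these already live in the boundary of the subcritical level, so no dimension-counting reordering is needed. Your route via Theorem~\ref{thm: flexible_subdomain} can also be completed, but you must actually say why $\Lambda_k$ is loose: in that proof $\Lambda_k = \phi(\Lambda_0)$, where $\Lambda_0$ is the attaching sphere of a symplectically cancelling pair and hence meets the belt sphere of $H^{n-1}$ exactly once, which forces $\Lambda_0$ (and therefore its contact-isotopic image $\phi(\Lambda_0)$) to be loose in $\partial(W_{sub}\cup H^{n-1})$. Note also that in the explicit construction every Legendrian already sits in $\partial(W_{sub}\cup H^{n-1})$, so your abstract reordering step and the attendant worry about transporting loose charts are unnecessary.
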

	Even though all of the Legendrians in Corollary \ref{cor: non_loose_link} are individually loose, the entire link $\Lambda_1 \coprod \cdots \coprod \Lambda_{k-1} \coprod \Lambda_k$ 
	may not be loose, i.e. the loose charts of $\Lambda_i$ intersect $\Lambda_k$ and loose chart of $\Lambda_k$ intersects $\Lambda_i$.  
	Otherwise all Weinstein domains would be flexible. 
	So the attaching Legendrians are themselves symplectically trivial but their linking is symplectically non-trivial, i.e. the symplectic topology of the domain is captured in this linking. Of course, $\Lambda_k$ becomes non-loose once we attach handles to $\Lambda_{1}, \cdots, \Lambda_{k-1}$ (and vice-versa).

	Now we present an example demonstrating Theorem \ref{thm: flexible_subdomain}. 
	\begin{examples}\label{ex: TS^n_decomp}
		Any Weinstein structure on $T^*S^n, n \ge 3,$ can be Weinstein homotoped to $T^*S^n_{flex} \cup H^{n-1} \cup H^n_\Lambda$ for some Legendrian $\Lambda$ in the contact manifold $\partial (B^{2n}_{std} \cup H^{n-1})$. A slightly modified version of Theorem \ref{thm: flexible_subdomain} shows that $T^*S^n$ can also be homotoped to $B^{2n}_{std} \cup H^n_{\Lambda}$; this is why we always have $WCrit(T^*S^n) = 2$ in Example \ref{ex: wcrit_cotangent}. We can reformulate this as follows. Let  $\mathfrak{Legendrian}((Y, \xi); \Lambda_0)$ denote parametrized Legendrians in the contact manifold $(Y, \xi)$, up to Legendrian isotopy, 
		that are in some fixed Legendrian formal isotopy class $\Lambda_0$. Let  $X^{2n}$ be an almost Weinstein domain, i.e. an almost complex domain with the homotopy type of an $n$-dimensional CW complex;  see Section \ref{sec: background}. Then let $\mathfrak{Weinstein}(X^{2n})$ denote Weinstein structures on $X^{2n}$ up to Weinstein homotopy. There is a natural map 
		\begin{equation}\label{eqn: leg_handle}
		\mathcal{H}_{crit}:\mathfrak{Legendrian}((S^{2n-1}, \xi_{std}); \Lambda_{unknot})
		\rightarrow 
		\mathfrak{Weinstein}(T^*S^n)
		\end{equation}
		taking a Legendrian $\Lambda \subset (S^{2n-1}, \xi_{std})= \partial B^{2n}_{std}$ which is formally isotopic to $\Lambda_{unknot}$   to the Weinstein structure $B^{2n}_{std} \cup H^n_{\Lambda}$ on $T^*S^n$.  The statement that  $WCrit = 2$ for any Weinstein structure on $T^*S^n$ implies that this map is surjective. In particular, the class of connected Legendrians is as complicated as the class of Weinstein structures. It is known that there are infinitely many Weinstein structures on $T^*S^n$ \cite{Abouzaid_Seidel, EGL, MM}, each distinguished by symplectic homology. Hence this reproves the result that there are infinitely many Legendrians in the same formal class as the Legendrian unknot; see Remark 4.11 in \cite{EGL}.
	\end{examples}

	Although our main result shows that Weinstein homotopy moves are more flexible than they might seem, there are limits to this flexibility. 
	For example,  Theorem \ref{thm: flexible_subdomain} shows that any Weinstein domain can be presented as a flexible domain plus a single extra handle, which is possibly non-flexible. As we now explain, it is crucial that the non-flexible critical handle is attached last and in general, it is impossible to first attach non-flexible handles and then attach flexible handles. So \textit{order} of flexilibity/non-flexibility matters, which is a sign of rigidity. As  expected, this rigidity ultimately comes from J-holomorphic curves.
	\begin{examples}\label{ex: TS^n_order}
		By Theorem \ref{thm: flexible_subdomain}, $T^*S^n_{std}$ is Weinstein homotopic to $T^*S^n_{flex} \cup H^{n-1} \cup H^n_{\Lambda} = (B^{2n}_{std} \cup H^n_{flex}) \cup H^{n-1} \cup H^n_{\Lambda} $ for some Legendrian $\Lambda$. In this case, we attach flexible handles first and then non-flexible handles. 
		However,  $T^*S^n_{std}$ cannot be presented as 
		$(B^{2n}_{std} \cup H^{n-1} \cup H^n_{\Lambda}) \cup H^n_{flex}$, where we first attach non-flexible handles and then flexible handles. This presentation is equivalent to a Weinstein structure  of the form $\Sigma^{2n}\cup H^n_{flex}$, for some exotic ball $\Sigma^{2n}$. We claim that $T^*S^n_{std}$ is not symplectomorphic to  $\Sigma^{2n}\cup H^n_{flex} $ for any $\Sigma^{2n}$.
		To see this, let $C \subset \Sigma^{2n}\cup H^n_{flex}$ be the Lagrangian co-core of $H^n_{flex}$. Since $H^n_{flex}$ is attached along a loose Legendrian in $\partial \Sigma^{2n}$,  the wrapped Floer homology $WH(C, C; T^*S^n_{std})$ vanishes. But $C$ generates $H_n(T^*S^n, \partial T^*S^n)  \cong \mathbb{Z}$ and so $C\cdot S^n = 1$, where $S^n \subset T^*S^n_{std}$ is the zero-section, a closed exact Lagrangian. But $WH(C, C; T^*S^n_{std}) = 0$ implies that $WH(C, S^n; T^*S^n_{std}) = 0$ and so $C \cdot S^n = \chi(WH(C, S^n; T^*S^n_{std}))= 0$, a contradiction. Another related way to see that $T^*S^n_{std}$ and $\Sigma^{2n}\cup H^n_{flex}= \Sigma^{2n} \natural T^*S^n_{flex}$ are not symplectomorphic is to note that the Grothendieck groups of their wrapped Fukaya categories are different: 	
		$K_0(\mathcal{W}(T^*S^n_{std})) \cong \mathbb{Z}$ while 
		$K_0(\mathcal{W}(T^*S^n_{flex} \natural \Sigma^{2n})) \cong 0$; see Section \ref{sec: fukaya_CE}.

		Since $T^*S^n_{std}$ is not of the form $\Sigma^{2n} \cup H^n_{flex}$, the map 
		\begin{equation}\label{eqn: non_surjective_loose_handle_attachment}
		\mathcal{H}_{loose}:\mathfrak{Weinstein}(B^{2n}) \rightarrow \mathfrak{Weinstein}(T^*S^n)
		\end{equation} 
		obtained by attaching a critical handle along a \textit{loose} Legendrian unknot to an exotic Weinstein ball is not surjective. This map is well-defined since any contact structure $\partial \Sigma^{2n}$ in the almost contact structure $(S^{2n-1}, J_{std})$ 
		has a unique loose Legendrian in the standard formal class. Furthermore, it has infinite image; for example, $\mathcal{H}_{loose}$ is injective on the exotic structures $\Sigma^{2n}_k$ constructed by McLean \cite{MM}. We contrast the non-surjectivity of $\mathcal{H}_{loose}$, a rigidity result, to the surjectivity of the map $\mathcal{H}_{crit}$ in Equation \ref{eqn: leg_handle}, a flexibility result. 
	\end{examples}

	Now we sketch the proof of Theorem \ref{thm: flexible_subdomain}, which implies the main result Theorem \ref{thm: fewcrit}.  The key idea is that certain  Weinstein homotopy moves called handle-slides can be used to make a Legendrian loose; see Section \ref{sec: background}. 
	More precisely, given two Legendrians and a local chart intersecting them, the handle-slide produces another Legendrian, which was described by Casals and Murphy \cite{Casals_Murphy_front}. We will show that there is a special choice of local chart such that the handle-slid Legendrian is loose (not all choices of charts result in loose Legendrians). For an arbitrary Weinstein domain, we  fix one Legendrian and handle-slide the rest of the Legendrians over that fixed Legendrian. For appropriate choices of local charts, the resulting Legendrians  form a loose link except for the fixed Legendrian which will in general intersect the loose charts of the other Legendrians; this is the content of Theorem \ref{thm: flexible_subdomain}.
	
	\subsection{Gradient trajectories and Reeb chords}\label{ssec: grad_traj}
	As mentioned before, one of our goals is to study to what extent the  simplification of smooth Morse functions holds in the Weinstein setting.
	As we explained before, the simplification in the smooth case was done by Smale \cite{smale_structure_manifolds} in the h-cobordism theorem, whose proof has two main steps. The first step is to apply handle-slides to make handles with consecutive indices cancel algebraically, i.e. for the belt sphere of a $k$ handle and the attaching sphere of a $k+1$ handle to have algebraic intersection number one. From the Morse theory point of view, the intersection of the belt sphere of a $k$-handle and the attaching sphere of a $k+1$ handle correspond to gradient trajectories between the associated index $k, k+1$ critical points. The second step is to use the Whitney trick to reduce the number of intersection points between algebraically cancelling handles to make them geometrically cancelling, i.e. have \textit{geometric} intersection number one. 
	
	Since Weinstein handles can be handle-slid in much the same way as smooth handles, the first step can be done in the Weinstein setting. However the second step necessarily fails since $WCrit(W) \ne Crit(W)$ in general but we can try attempt to perform it and see how far we get. 
	By Theorem \ref{thm: flexible_subdomain}, any smoothly trivial Weinstein cobordism $W$ can be Weinstein homotoped to have two Weinstein handles of index $n-1, n$ that cancel algebraically, i.e. $W = H^{n-1} \cup H^n_{\Lambda}$.  The Whitney trick shows that in this case, it is possible to \textit{smoothly} isotope the attaching sphere $\Lambda$ so it intersects the belt sphere of $H^{n-1}$ in exactly one point. However, if  $\Lambda$ intersects the belt sphere of $H^{n-1}$ in a single point, then it is loose \cite{CE12}, which implies that the Weinstein cobordism is flexible. Hence, in general it is impossible to realize this smooth isotopy by a Legendrian isotopy and to reduce the geometric intersection number to one. The minimal possible number is therefore three; it must be greater than one and must be odd since it agrees mod $2$ with the algebraic intersection number, which is  one. Although we do not know whether the geometric intersection number can always be reduced to three, in the following result we show that it is possible to reduce this number to some universal constant independent of the Weinstein structure. 
	So we can get uniformly close to realizing the second step of Smale's h-cobordism proof.  
	\begin{theorem}\label{thm: intersection points}
		There exists a constant $C_n \ge 3$ depending only on $n$ such that 
		any smoothly trivial Weinstein cobordism $W^{2n}, n \ge 3,$ can be Weinstein homotoped to a presentation with two handles of index $n-1, n$ such that the belt sphere of the $n-1$ handle and the attaching sphere of the $n$ handle intersect $C_n$ times. 
	\end{theorem}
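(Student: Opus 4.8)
The plan is to begin from the presentation furnished by Theorem \ref{thm: flexible_subdomain}: after a Weinstein homotopy I may assume $W = H^{n-1}\cup H^n_\Lambda$, where the attaching sphere $\Lambda$ of the top handle is a loose Legendrian and cancels the belt sphere $b$ of $H^{n-1}$ algebraically (this is precisely the first, handle-slide, step of Smale's procedure, which is available Weinstein-ly). Write $Y = \partial(B^{2n}_{std}\cup H^{n-1})$ for the contact manifold in which $\Lambda$ and $b$ live, and let $D$ be the co-core disk of $H^{n-1}$, so that $\partial D = b$ and the transverse intersection $\Lambda\cap b$ consists of isolated points (a dimension count $(n-1)+n-(2n-1)=0$) whose number I want to bound. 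Since $W$ is smoothly trivial and $n\ge 3$, the smooth Whitney trick produces a smooth isotopy carrying $\Lambda$ to a sphere meeting $b$ in a single point; equivalently, $\Lambda$ is smoothly isotopic to the standard cancelling model $\Lambda_{\mathrm{std}}$. The entire difficulty is that this smooth isotopy cannot be realized Legendrian-ly in general: a one-point intersection forces $\Lambda$ to be loose with flexible cobordism, which need not hold.

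The key idea is to separate the smooth data from the formal data. First I would fix a universal local model: a loose Legendrian that is smoothly the one-point cancelling configuration but which carries an auxiliary loose chart $U$ placed disjointly from $b$. Producing this model from $\Lambda_{\mathrm{std}}$ costs only a number of additional transverse intersections with $b$ that depends on $n$ alone, since it is a bounded local surgery (a standard zig-zag/stabilization region) into which nothing about the ambient Weinstein structure enters. Next, observing that $\Lambda$ determines, relative to the disk $D$, a formal Legendrian isotopy class measured by an element $\alpha$ of a homotopy group $\pi_{n-1}$ of the space of formal Legendrian data, I would install $\alpha$ on the model by inserting stabilizations supported entirely inside the chart $U$. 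Because $U$ is disjoint from $b$, these stabilizations alter the formal class without changing the intersection with $b$ at all, so an arbitrarily complicated $\alpha$ (even an infinite $\mathbb{Z}$-worth of classes) is absorbed at no cost to the intersection count. This yields a loose Legendrian $\tilde\Lambda$ that is smoothly isotopic to $\Lambda_{\mathrm{std}}$, has formal class $\alpha$, and meets $b$ in at most $C_n$ points, with $C_n$ depending only on $n$.

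Finally I would invoke Murphy's h-principle \cite{Murphy11}: $\tilde\Lambda$ and $\Lambda$ are both loose and, by construction, formally Legendrian isotopic in $Y$ (the same smooth sphere up to isotopy, carrying the same formal class $\alpha$), hence they are genuinely Legendrian isotopic. Since a Legendrian isotopy of an attaching sphere is a Weinstein homotopy of the cobordism, $W$ is Weinstein homotopic to $H^{n-1}\cup H^n_{\tilde\Lambda}$, in which the belt sphere of $H^{n-1}$ meets the attaching sphere in at most $C_n$ points. Adding cancelling pairs inside a Darboux chart, each raising the count by two, lets me arrange the number to be exactly the fixed odd value $C_n\ge 3$, matching the required parity (the algebraic intersection is one).

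I expect the essential obstacle to be the middle step: proving that the formal class $\alpha$ can be installed by local moves supported in a chart disjoint from $b$, so that the intersection count stays bounded by a constant genuinely independent of the Weinstein structure (and in particular of how large $\alpha$ is). This is exactly where rigidity must be confronted, since the one-point configuration is rigidly tied to the trivial formal class $\alpha_0$ — otherwise the cobordism would be flexible — so a positive amount of room, and hence at least the three forced intersection points, is unavoidable; the crux is to show that a \emph{universally bounded} amount of room already carries every formal class. Verifying that the stabilizations indeed generate the relevant formal group and that creating the loose chart $U$ off of $b$ costs only $O_n(1)$ intersections, using the explicit Casals--Murphy front description \cite{Casals_Murphy_front} of the handle-slides, is the technical heart of the argument.
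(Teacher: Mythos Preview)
Your argument has a fatal gap at the very first step. You claim that Theorem \ref{thm: flexible_subdomain} lets you assume the attaching sphere $\Lambda$ of the single $n$-handle is loose. It does not: that theorem says the \emph{complement} of the last handle is flexible, not that the last attaching sphere is loose. In fact $\Lambda\subset\partial(\partial_-W\cup H^{n-1})$ cannot be loose in general, because if it were, the two-handle cobordism $H^{n-1}\cup H^n_\Lambda$ would be flexible, hence Weinstein homotopic to the product, and every smoothly trivial Weinstein cobordism would be Weinstein trivial. This contradicts the existence of exotic Weinstein balls (Example \ref{ex: ball}). Put differently: if your starting hypothesis held, you would not need the elaborate ``install the formal class $\alpha$'' mechanism at all---Murphy's h-principle would directly isotope $\Lambda$ to the standard one-point model $\Lambda_{\mathrm{std}}$ (which is itself loose, as noted in Section \ref{ssec: grad_traj}), giving $C_n=1$. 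Your own discussion acknowledges this is impossible, so the hypothesis is self-defeating.

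The paper's proof sidesteps this by never trying to make the given $\Lambda_1$ loose in the two-handle cobordism. Instead it introduces an \emph{auxiliary} symplectically cancelling pair $H^{n-1}_0\cup H^n_{\Lambda_0}$ in a Darboux ball, handle-slides $\Lambda_1$ twice over $\Lambda_0$ to make $h^2_{\Lambda_0}(\Lambda_1)$ loose (at the cost of $\Lambda_0$ passing through its loose chart), and then uses the contact isotopy $\phi$ from the proof of Theorem \ref{thm: flexible_subdomain} to push $h^2_{\Lambda_0}(\Lambda_1)$ off the belt sphere of $H^{n-1}_0$. The crucial observation is that this entire local picture near $H^{n-1}_0\cup H^n_{\Lambda_0}$ is independent of $\Lambda_1$, so the number $C_n$ of intersections of $\phi(\Lambda_0)$ with the belt sphere of $H^{n-1}_0$ is universal. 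One then uses the looseness of $\phi(h^2_{\Lambda_0}(\Lambda_1))$ to Weinstein-cancel it against the \emph{original} $H^{n-1}_1$ (via a further isotopy $\psi$ and a handle-slide, both supported away from $H^{n-1}_0$), leaving $W=H^{n-1}_0\cup H^n_{\Lambda_0'}$ with the intersection count exactly $C_n$. The non-looseness---and hence all the rigidity of $W$---is carried by $\Lambda_0'$, whose ``far-away'' part encodes the original $\Lambda_1$ while its intersection pattern with the new belt sphere is fixed by the universal local model. Your idea of separating a universal local piece from a variable far-away piece is the right intuition, but it must be implemented with an auxiliary handle pair rather than by assuming looseness of the given attaching sphere.
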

	This is equivalent to having a Weinstein Morse function with two critical points of index $n-1, n$ such that there are $C_n$ gradient trajectories from the index $n$ to the index $n-1$ critical point. The proof of Theorem \ref{thm: intersection points} actually shows that it is possible in principle to compute $C_n$. However this seems to depend on having a good understanding of a certain (local) Legendrian isotopy which comes from an h-principle and is therefore not very explicit. 
	
	We also point out that Theorem \ref{thm: intersection points} can be interpreted as a decomposition of Legendrian attaching spheres for smoothly trivial Weinstein cobordisms. For example, suppose $\partial_-W^{2n} = (S^{2n-1}, \xi_{std})$ in Theorem \ref{thm: intersection points}. Up to Weinstein homotopy, we can assume that the attaching Legendrian has a standard part that intersects the belt sphere $C_n$ times and a variable part outside the belt sphere, i.e. in $(S^{2n-1}, \xi_{std}) \backslash Op(S^{n-2}) \subset (\mathbb{R}^{2n-1}, \xi_{std})$. So  all the interesting symplectic topology of the attaching Legendrian can be put outside the belt sphere, in $(\mathbb{R}^{2n-1}, \xi_{std})$
	(but of course it is important that the attaching Legendrian interact non-trivially with the belt sphere). Put another way, any Weinstein structure on $B^{2n}$ can be obtained from $B^{2n}_{std}$
	by attaching a single ``generalized" Weinstein handle along a singular Legendrian with pinwheel singularity with at most $C_n$ spokes on the pinwheel.

	Theorem \ref{thm: flexible_subdomain} shows that all the interesting symplectic topology of a Weinstein domain occurs in the interaction of two smoothly cancelling handles of index $n-1$ and $n$ and Theorem \ref{thm: intersection points} shows that it is possible to simplify this interaction.
	However in the presence of multiple $n-1$ handles, the attaching Legendrian for the $n$-handle might have to pass through \textit{all} $n-1$ handles, even when this is topologically unnecessary, showing that in general the situation is more complicated than these results might seem to indicate. Again we need to use J-holomorphic curve invariants for such a rigidity statement. 
	\begin{examples}
		Consider a subflexible Weinstein structure $W^{2n}$ on $B^{2n}\cup H^{n-1}$ that is not flexible. Such an example was constructed by Murphy and Siegel \cite{MS} and has zero symplectic homology $SH(W^{2n})$ but non-zero \textit{deformed} symplectic homology $SH^{\alpha}(W^{2n})$; 
		here $\alpha$ is the generator of $H^{n-1}(B^{2n}\cup H^{n-1})\cong \mathbb{Z}$.  
		So this domain is smoothly subcritical but is not symplectically subcritical and hence by Theorem \ref{thm: fewcrit} admits a Weinstein presentation with four handles: one of index 0, two of index n-1, and one of index n, i.e. $B^{2n}_{std} \cup H^{n-1}_1 \cup H^{n-1}_2 \cup H^n_{\Lambda}$.  Here $\Lambda$ has algebraic intersection number 1 with $H^{n-1}_1$ and 0 with $H^{n-1}_2$. However $\Lambda$ has geometric intersection number at least 3 with $H^{n-1}_1$ since otherwise $\Lambda$ would be loose. 
		Furthermore,  $\Lambda$ must have geometric intersection number at least 2 with $H^{n-1}_2$; so $\Lambda$ must interact with \textit{both} $H^{n-1}_1$ and $H^{n-1}_2$. 
		Otherwise, the domain would be of the form $(B^{2n}_{std}\cup H^{n-1}_1 \cup H^{n}_{\Lambda}) \cup H^{n-1}_2 = \Sigma^{2n} \cup H^{n-1}$, for some exotic structure $\Sigma^{2n}$ on $B^{2n}$.  However $\Sigma^{2n} \cup H^{n-1}$ has zero deformed symplectic homology as we now show. Since $H^{n-1}$ is a subcritical handle, the Viterbo transfer map $SH^{\alpha}(\Sigma^{2n} \cup H^{n-1}) \rightarrow SH^{i^*\alpha}(\Sigma^{2n})$ is an isomorphism, where $i^*: H^{n-1}(\Sigma^{2n} \cup H^{n-1}) \rightarrow H^{n-1}(\Sigma^{2n})$ is the induced map on cohomology. 
		Since $i^*\alpha  \in H^{n-1}(\Sigma^{2n}) = 0$, $SH^{i^*\alpha}(\Sigma^{2n})$ agrees with the undeformed symplectic homology $SH(\Sigma^{2n})$. Since $\Sigma^{2n}$ is a subdomain of $W^{2n}$, which has vanishing $SH$, and the Viterbo map is unital, $SH(\Sigma)$ also vanishes.  Therefore  $SH^{\alpha}(\Sigma^{2n} \cup H^{n-1})$ is also zero and so $\Sigma^{2n} \cup H^{n-1}$ cannot be Weinstein homotopic to $W^{2n}$. We note that Theorem \ref{thm: intersection points} shows that it is possible to choose $\Lambda$ so that it intersects the belt sphere of $H^{n-1}_1$ at most $C_n$ times (since $W^{2n} \backslash (B^{2n}_{std} \cup H^{n-1}_2)$ is smoothly trivial). We do not know whether there is an analogous bound on the intersection number between $\Lambda$ and $H^{n-1}_2$. 
		
		Since $W$ is not of the form $\Sigma^{2n} \cup H^{n-1}$ for any exotic Weinstein ball $\Sigma^{2n}$,  the map 
		\begin{equation}\label{eqn: non_surjective_subcritical_handle}
		\mathcal{H}_{sub}: \mathfrak{Weinstein}(B^{2n}) \rightarrow \mathfrak{Weinstein}(B^{2n} \cup H^{n-1})
		\end{equation}
		obtained by attaching a \textit{subcritical} handle to an exotic Weinstein ball is not surjective; see \cite{Ghiggini_subcritical_contact_surgery} for an analogous result in the contact case. This rigidity result is similar to the non-surjectivity of the map $\mathcal{H}_{loose}$ in Equation \ref{eqn: non_surjective_loose_handle_attachment} for flexible handle attachment and in contrast to the surjectivity of $\mathcal{H}_{crit}$ in Equation \ref{eqn: leg_handle} for critical handle attachment to the standard Weinstein ball. 
	\end{examples}
	
		\subsection{Results for the wrapped Fukaya category and the Chekanov-Eliashberg DGA}\label{sec: fukaya_CE} 
	We now give some applications of the flexibility results in Sections \ref{ssec: almost_minimal_subdomains}, \ref{ssec: flex_subdomains} 
to certain J-holomorphic curve invariants.
	  To a Weinstein (or Liouville) domain $X^{2n}$ (with a choice of grading data), one can associate the wrapped Fukaya category $\mathcal{W}(X)$ of $X$,  a certain $A_\infty$-category. 
	The objects of $\mathcal{W}(X)$ are (graded) exact Lagrangians in $X^{2n}$ that are closed or have Legendrian boundary in $\partial X^{2n}$; the morphisms are wrapped Floer cochains.
	In the context of homological mirror symmetry, one considers the derived Fukaya category $D^b\mathcal{W}(X)$, the cohomology category of twisted complexes over $\mathcal{W}(X)$, i.e. 
	$D^b\mathcal{W}(X): = H^0(Tw(\mathcal{W}(X))$.
	This is a triangulated category and Weinstein domains with symplectomorphic completions have exact equivalent derived Fukaya categories. 
	
	To obtain a more explicit description of the wrapped Fukaya category, 
	it is useful to find a set of \textit{generators}. 
	Since $D^b\mathcal{W}(X)$ is triangulated, one can take mapping cones on morphisms. A set of objects $G_i$ are  generators of $D^b\mathcal{W}(X)$  if every object of the category is isomorphic to an iterated mapping cone on them;  equivalently, every object is  isomorphic to a twisted complex on the generators. In this case, there is an exact equivalence between $D^b\mathcal{W}(X)$ and $H^0(Tw(\mathcal{G}))$, where $\mathcal{G}$ is the $A_\infty$-subcategory with objects $G_i$.
	Let $g(\mathcal{W}(X))$ denote the minimum number of generators for $D^b\mathcal{W}(X)$. 
	Many proofs of homological mirror symmetry involve finding some collection of generators for $D^b\mathcal{W}(X)$ and then showing that the endomorphism algebra of these generators is quasi-isomorphic to the endomorphism algebra of some generating coherent sheaves on the mirror.
	
	Theorem \ref{thm: fewcrit} can be used to bound the number of generators $g(\mathcal{W}(X))$ for  $D^b\mathcal{W}(X)$. The unstable manifold of an index $n$ critical point of a Weinstein Morse function, or \textit{co-core}, is a Lagrangian disk with Legendrian boundary and hence defines an object in $D^b\mathcal{W}(X)$.  As proven in \cite{chantraine_cocores_generate, ganatra_generation}, the co-cores of the index $n$ critical points of any Weinstein Morse function on $X$ generate  $D^b\mathcal{W}(X)$, i.e.
	$g(\mathcal{W}(X^{2n})) \le WCrit_n(X^{2n})$. 	
	Theorem \ref{thm: fewcrit} shows that there is a topological bound on $WCrit_n(X^{2n})$ and hence on the number of generators needed. For the following result, let $g(A)$ denote the minimum number of generators of an abelian group $A$. 
	\begin{corollary}\label{cor: generators_fukaya_cat}
		If $X^{2n}, n \ge 3$, is a Weinstein domain, then $g(\mathcal{W}(X)) \le \max\{1, g(H^n(X; \mathbb{Z})) \}$.
	\end{corollary}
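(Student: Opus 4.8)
The plan is to chain three facts: co-core generation, the index-refined form of Theorem~\ref{thm: fewcrit}, and a smooth computation of the minimal number of top-index critical points. I would start from the generation result of \cite{chantraine_cocores_generate, ganatra_generation}: for \emph{any} Weinstein Morse function on $X^{2n}$, the Lagrangian co-cores of its index-$n$ critical points generate $D^b\mathcal{W}(X)$. Applying this to a Weinstein Morse function realizing the minimal number of index-$n$ critical points gives immediately $g(\mathcal{W}(X)) \le WCrit_n(X)$. Note that it is irrelevant whether the same function also minimizes the total number of critical points; only the index-$n$ count enters.

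Next I would feed in the index-by-index strengthening of Theorem~\ref{thm: fewcrit} recorded just after its statement, namely $WCrit_n(X^{2n}) \le \max\{1, Crit_n(X^{2n})\}$. Everything then reduces to the purely smooth identity $Crit_n(X) = g(H^n(X;\mathbb{Z}))$ together with the bookkeeping of the $\max\{1,\,\cdot\,\}$. Since a Weinstein domain has the homotopy type of an $n$-dimensional CW complex, $H_n(X;\mathbb{Z})$ is free, and universal coefficients give $H^n(X;\mathbb{Z}) \cong \mathrm{Hom}(H_n(X;\mathbb{Z}),\mathbb{Z}) \oplus \mathrm{Ext}(H_{n-1}(X;\mathbb{Z}),\mathbb{Z})$; because the first summand is free and the second is finite, this yields $g(H^n(X;\mathbb{Z})) = \mathrm{rank}\,H_n(X;\mathbb{Z}) + g(\mathrm{Tor}\,H_{n-1}(X;\mathbb{Z}))$. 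The inequality $Crit_n(X) \ge g(H^n(X;\mathbb{Z}))$ is then just the torsion-refined Morse inequality: applying the field-coefficient bound $c_n \ge \dim_{\mathbb{F}_p} H_n(X;\mathbb{F}_p)$ and choosing a prime $p$ dividing the smallest invariant factor of $\mathrm{Tor}\,H_{n-1}$ recovers exactly $\mathrm{rank}\,H_n + g(\mathrm{Tor}\,H_{n-1})$. For the subcritical bookkeeping: if $Crit_n(X)=0$ there is a smooth Morse function with $C_n = 0$, forcing $H_n(X;\mathbb{Z})=0$ and $\mathrm{Tor}\,H_{n-1}(X;\mathbb{Z})=0$, hence $g(H^n(X;\mathbb{Z}))=0$, and conversely. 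So $\max\{1,Crit_n\} = \max\{1,g(H^n)\}$ once the realization below is in hand, and the corollary follows.

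The hard part will be the reverse (realization) inequality $Crit_n(X) \le g(H^n(X;\mathbb{Z}))$: simplifying an arbitrary smooth handle decomposition so that the number of index-$n$ handles attains the homological minimum. Starting from any decomposition with handles of index $\le n$ (available since $X$ is homotopy equivalent to an $n$-complex and $n \ge 3$), the surplus index-$n$ handles are $\mathbb{Z}$-homologically redundant, and I would remove them by cancelling each against an index-$(n-1)$ handle. This is exactly where the Whitney trick is applied, in the separating level set of dimension $2n-1 \ge 5$; this is the same stable-range input ($n \ge 3$) that drives Theorem~\ref{thm: fewcrit}, and it is why the dimension restriction is essential. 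The delicate point I would watch is the role of $\pi_1$: handle slides and cancellations are governed by the group ring $\mathbb{Z}[\pi_1(X)]$ rather than by $\mathbb{Z}$ alone, so in the simply-connected case this is Smale's cancellation theorem, while in general one must verify that the surplus top handles can indeed be traded down to attain the $\mathbb{Z}$-homological count. Granting this smooth realization, the three-step chain $g(\mathcal{W}(X)) \le WCrit_n(X) \le \max\{1,Crit_n(X)\} = \max\{1,g(H^n(X;\mathbb{Z}))\}$ closes the proof.
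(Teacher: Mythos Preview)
Your chain $g(\mathcal{W}(X)) \le WCrit_n(X) \le \max\{1, Crit_n(X)\}$ and the simply-connected identification $Crit_n(X) = g(H^n(X;\mathbb{Z}))$ via Smale both match the paper. The gap you yourself flag --- the realization $Crit_n(X) \le g(H^n(X;\mathbb{Z}))$ when $\pi_1(X) \ne 0$ --- is real, and you do not close it. Handle slides and cancellations in the separating level set are governed by $\mathbb{Z}[\pi_1]$, and bringing the number of top-index handles down to the $\mathbb{Z}$-homological minimum is not a consequence of the Whitney trick alone; at minimum it would require a handle-trading argument you do not supply, and ``granting this smooth realization'' leaves the proof incomplete precisely in the non-simply-connected case.

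The paper does not attempt this smooth statement at all. Instead it attaches Weinstein $2$-handles to $X$ to kill $\pi_1$, obtaining a simply-connected Weinstein domain $Y$. Since $n \ge 3$ these $2$-handles are subcritical, so $D^b\mathcal{W}(Y) \simeq D^b\mathcal{W}(X)$ by \cite{ganatra_generation}, whence $g(\mathcal{W}(X)) = g(\mathcal{W}(Y))$; and since $2 < n$ one also has $H^n(Y;\mathbb{Z}) \cong H^n(X;\mathbb{Z})$. The simply-connected case applied to $Y$ then yields the bound for $X$. The reduction therefore takes place at the level of the Fukaya category, not smooth Morse theory: invariance of $\mathcal{W}$ under subcritical handle attachment is what replaces the missing smooth realization step. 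This is the idea your proposal is missing.
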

	\begin{proof}
		The proof of Theorem \ref{thm: fewcrit} shows that $WCrit_n(X) \le \max\{1, Crit_n(X)\}$ for all $X^{2n}$. 
		Combining this with the result from \cite{chantraine_cocores_generate, ganatra_generation}, we get the inequality 
		$g(\mathcal{W}(X)) \le \max\{1, Crit_n(X) \}$.
		If $X^{2n}$ is simply-connected, then  Smale's h-cobordism theorem (which holds since $n \ge 3$) implies that 
		$Crit_n(X)  =  g(H^n(X; \mathbb{Z}))$, which proves the result in that case. If $X^{2n}$ is not simply-connected, we attach some $2$-handles to $X^{2n}$ to get a simply-connected Weinstein domain $Y^{2n}$. Since $n \ge 3$, we have 
		$H^n(Y^{2n}; \mathbb{Z}) \cong 
		H^n(X^{2n}; \mathbb{Z})$ and so 
		$g(H^n(Y^{2n}; \mathbb{Z})) = 
		g(H^n(X^{2n}; \mathbb{Z}))$.
		Furthermore, since $n \ge 3$, the $2$-handles are subcritical and hence 
		$D^b\mathcal{W}(Y)$ is exact equivalent to 
		$D^b\mathcal{W}(X)$ by \cite{ganatra_generation} and so $g(\mathcal{W}(X)) = g(\mathcal{W}(Y))$. Then the result for $Y^{2n}$, which is simply-connected, implies the result for $X^{2n}$. 	
	\end{proof}
	\begin{remark} 
		Since the Lagrangian co-cores are disks, they are graded objects for any grading of the wrapped Fukaya category. As noted in \cite{chantraine_cocores_generate}, generation by co-cores holds for any grading of the wrapped Fukaya category and therefore our results also hold for any grading. 
	\end{remark} 
	
	A related notion is that of \textit{split-generation}: a set of objects are split-generators if every objects of the category is a \textit{summand} of a twisted complex on these objects. This is a useful notion since there are closed symplectic manifolds whose Fukaya categories have finitely many split-generators but no finite collection of generators, e.g. the 2-torus. 	
	We emphasize that Corollary \ref{cor: generators_fukaya_cat} concerns generation, not split-generation.  
	Whenever there is a finite collection of generators (or split-generators), there is a single split-generator, namely the direct sum of all these objects. So the number of split-generators is not an interesting invariant. 
	
	The number of generators, on the other hand, is a meaningful invariant and in certain cases, the inequality in Corollary \ref{cor: generators_fukaya_cat} is sharp. For example, if $X^{2n}$ is a Weinstein ball, then Corollary \ref{cor: generators_fukaya_cat} shows that at most one generator is needed and if the Fukaya category of this ball is non-trivial (as is the case for the exotic structures constructed by McLean \cite{MM}), then at least one generator is needed. In certain cases, the number of generators needed for $\mathcal{W}(X)$ is greater than one. Since   $D^b\mathcal{W}(X)$ is a triangulated category, we can consider its Grothendieck group $K_0(\mathcal{W}(X)): = K_0(D^b\mathcal{W}(X))$.
	For any triangulated category, the minimum number of generators for the Grothendieck group
	gives a lower bound on the number of generators of the category. In particular, Corollary \ref{cor: generators_fukaya_cat} 
	implies 
	that  for any Weinstein domain $X^{2n}, n \ge 3,$ we have
	\begin{equation}\label{eqn:grot_inequality}
	g(K_0(\mathcal{W}(X))) \le g(\mathcal{W}(X)) \le  \max\{1, g(H^n(X^{2n}; \mathbb{Z})) \}
	\end{equation}
	There are Weinstein domains for which $g(K_0(\mathcal{W}(X)))$ is bigger than one.  For example, consider 
	the boundary connected sum $\natural^k T^*S^n$ of $k$ copies of $T^*S^n_{std}$. As explained to the author by Abouzaid,  $K_0(\mathcal{W}(\natural^k T^*S^n))$ has rank at least $k$. Namely, let $\phi_i: K_0(\mathcal{W}(\natural^k T^*S^n)) \rightarrow \mathbb{Z}$ be $\chi(HW(\_, S_i^n))$,
	the Euler characteristic of morphisms from the $i$th-zero section $S_i^n$. Then $(\phi_1, \cdots, \phi_k): K_0(\mathcal{W}(\natural^k T^*S^n)) \rightarrow \mathbb{Z}^k$ is surjective and so $g(K_0(\mathcal{W}(\natural^k T^*S^n))) \ge k$. On the other hand, $g(H^n(\natural^k T^*S^n; \mathbb{Z})) = k$ and so all the inequalities in Equation \ref{eqn:grot_inequality} are all actually equalities. 	
	If we consider generators for the Grothendieck group instead of for the  whole category, a stronger version of
	Corollary \ref{cor: generators_fukaya_cat} holds. 
	The following corollary of Equation \ref{eqn:grot_inequality} was explained to the author by Ivan Smith in the case when $X^{2n}$ is a ball.
	\begin{corollary}\label{cor: Grothendieck_group}
		If $X^{2n}, n \ge 3$ is a Weinstein domain, then $g(K_0(\mathcal{W}(X))) \le g(H^n(X; \mathbb{Z}))$. 
		In particular, if $H^n(X; \mathbb{Z}) = 0$, then $K_0(\mathcal{W}(X))  = 0$. 	
	\end{corollary}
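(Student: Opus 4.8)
The plan is to isolate the one case that Equation \ref{eqn:grot_inequality} does not already settle, namely $H^n(X;\mathbb{Z})=0$, and to show that there the Grothendieck group vanishes outright rather than merely being cyclic. First I would reduce to the simply-connected case exactly as in the proof of Corollary \ref{cor: generators_fukaya_cat}: attaching subcritical $2$-handles (possible since $n\ge 3$) produces a simply-connected Weinstein domain $Y$ with $H^n(Y;\mathbb{Z})\cong H^n(X;\mathbb{Z})$ and, by \cite{ganatra_generation}, an exact equivalence $D^b\mathcal{W}(Y)\simeq D^b\mathcal{W}(X)$, hence $K_0(\mathcal{W}(Y))\cong K_0(\mathcal{W}(X))$. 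So I may assume $X$ is simply-connected. If $H^n(X;\mathbb{Z})\ne 0$ then $\max\{1,g(H^n(X;\mathbb{Z}))\}=g(H^n(X;\mathbb{Z}))$, and Equation \ref{eqn:grot_inequality} immediately gives $g(K_0(\mathcal{W}(X)))\le g(H^n(X;\mathbb{Z}))$.

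The whole content is therefore the case $H^n(X;\mathbb{Z})=0$, the situation Ivan Smith treated for the ball. By the universal coefficient theorem, and because $X$ has the homotopy type of an $n$-dimensional complex (so $H_n(X;\mathbb{Z})$ is free), the vanishing $H^n(X;\mathbb{Z})=0$ is equivalent to $H_n(X;\mathbb{Z})=0$ together with $H_{n-1}(X;\mathbb{Z})$ being torsion-free; that is, $X$ is not smoothly critical and hence, being simply-connected, is smoothly subcritical. In particular $Crit_n(X)=0$, so the proof of Theorem \ref{thm: fewcrit} gives $WCrit_n(X)\le 1$. Since the index $n$ cocores generate $D^b\mathcal{W}(X)$ by \cite{chantraine_cocores_generate, ganatra_generation}, either there are no cocores and $\mathcal{W}(X)\simeq 0$ (so $K_0=0$ at once), or there is a single Lagrangian cocore $C$ and $K_0(\mathcal{W}(X))=\mathbb{Z}\cdot[C]$ is cyclic. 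The task is to prove that this generator is zero.

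To attack this I would introduce the fundamental class homomorphism $\psi\colon K_0(\mathcal{W}(X))\to H_n(X,\partial X;\mathbb{Z})\cong H^n(X;\mathbb{Z})$ sending a graded exact Lagrangian to its relative fundamental class; this descends to $K_0$ because the geometric mapping-cone operation underlying an exact triangle is a Lagrangian surgery, under which relative homology classes add. Since $H^n(X;\mathbb{Z})=0$ we get $\psi([C])=0$, i.e. $C$ is null-homologous in $(X,\partial X)$. The crux is to upgrade this homological vanishing to the statement $[C]=0$ in $K_0$, and here one must use symplectic rather than smooth input: one cannot simply make $C$ loose, for if the single critical handle were attached along a loose Legendrian then $X$ would be flexible and, being simply-connected and smoothly subcritical, Weinstein subcritical, contradicting the existence of a cocore. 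Thus $C$ is a genuinely nonzero object (indeed $HW(C,C)\ne 0$), and the relation $[C]=0$ must be invisible to $HW(C,C)$ alone.

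The main obstacle is precisely producing this relation. I would try to realize the null-homology of $C$ by an exact Lagrangian cobordism and feed it into the Biran--Cornea correspondence between Lagrangian cobordisms and exact triangles, so that $[C]$ is written as an alternating sum that telescopes to zero; equivalently, writing $\mathcal{W}(X)\simeq \mathrm{Perf}(A)$ for $A=CW^{*}(C,C)$, the goal becomes $[A]=0$ in $K_0(\mathrm{Perf}(A))$, i.e. that the Chekanov--Eliashberg algebra of the belt sphere carries no rank-type functional forcing $[C]\ne 0$. This is dual to the non-existence of finite-dimensional representations advertised in the abstract, and I expect it to be the only step demanding more than formal manipulation. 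Once it is in place, $K_0(\mathcal{W}(X))=\mathbb{Z}\cdot[C]=0$, which together with the two easy cases yields $g(K_0(\mathcal{W}(X)))\le g(H^n(X;\mathbb{Z}))$ and the stated vanishing when $H^n(X;\mathbb{Z})=0$.
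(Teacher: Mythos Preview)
Your proposal correctly isolates the case $H^n(X;\mathbb{Z})=0$ and reduces to showing that the single generator $[C]$ of the (at most) cyclic group $K_0(\mathcal{W}(X))$ vanishes. However, the crucial step---upgrading the homological vanishing $\psi([C])=0$ to $[C]=0$ in $K_0$---is left open, and the routes you sketch do not close it. A Biran--Cornea style argument would require an explicit exact Lagrangian cobordism from $C$ to the empty Lagrangian, and there is no reason such a cobordism should exist in an arbitrary (non-flexible) smoothly subcritical Weinstein domain. Your alternative suggestion, appealing to the non-existence of finite-dimensional representations of the Chekanov--Eliashberg algebra, would be circular: in the paper that statement (Corollary~\ref{cor: no_aug}) is \emph{deduced from} the present corollary, not used to prove it, and in any case the absence of such representations only rules out certain homomorphisms $K_0\to\mathbb{Z}$ rather than forcing $K_0=0$.

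The paper's proof bypasses this difficulty with a short self-amplification trick. Assuming for contradiction that $K_0(\mathcal{W}(X))\cong\mathbb{Z}/k\mathbb{Z}$ is nonzero, one forms the boundary connected sum $X\natural X$. The connecting $1$-handle is subcritical, so $D^b\mathcal{W}(X\natural X)\simeq D^b\mathcal{W}(X)\times D^b\mathcal{W}(X)$ and hence $K_0(\mathcal{W}(X\natural X))\cong\mathbb{Z}/k\mathbb{Z}\oplus\mathbb{Z}/k\mathbb{Z}$, which is not cyclic; thus $g(K_0(\mathcal{W}(X\natural X)))=2$. On the other hand $H^n(X\natural X;\mathbb{Z})\cong H^n(X;\mathbb{Z})\oplus H^n(X;\mathbb{Z})=0$, and Equation~\eqref{eqn:grot_inequality} applied to $X\natural X$ gives $g(K_0(\mathcal{W}(X\natural X)))\le 1$, a contradiction. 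No explicit relation $[C]=0$ is ever exhibited; the argument is purely formal once Equation~\eqref{eqn:grot_inequality} is in hand.
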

	\begin{proof}
		The case $	 g(H^n(X; \mathbb{Z})) \ge 1 $ is proven by Equation \ref{eqn:grot_inequality} so it suffices to do the case when  $g(H^n(X; \mathbb{Z})) = 0$. Then $g(K_0(\mathcal{W}(X)) ) \le 1$ by Equation \ref{eqn:grot_inequality} and if 
		$g(K_0(\mathcal{W}(X))) = 0$, we are done. Otherwise,
		$g(K_0(\mathcal{W}(X))) = 1$ and so 
		$K_0(\mathcal{W}(x)) \cong \mathbb{Z}/k \mathbb{Z}$ for some integer $k \ge 0$. 
		Now we take the boundary connected sum and form the new Weinstein domain $X\natural X$.
		Since $1$-handles are subcritical,  $D^b\mathcal{W}(X\natural X) \cong D^b\mathcal{W}(X \coprod X)$ by \cite{ganatra_generation} and  $D^b\mathcal{W}(X \coprod X) \cong  D^b\mathcal{W}(X) \prod
		D^b\mathcal{W}(X)$. 		 
		As a result, 	$K_0(\mathcal{W}(X\natural X)) \cong K_0(\mathcal{W}(X)) \oplus K_0(\mathcal{W}(X)) \cong \mathbb{Z}/k \mathbb{Z} \oplus \mathbb{Z}/k \mathbb{Z}$. This implies that
		$g(K_0(\mathcal{W}(X\natural X))) = 2$ since $\mathbb{Z}/k \mathbb{Z} \oplus \mathbb{Z}/k \mathbb{Z}$ is not a cyclic group. 
		On the other hand, we also have $H^n(X \natural X; \mathbb{Z}) \cong 
		H^n(X; \mathbb{Z})
		\oplus 
		H^n(X; \mathbb{Z}) \cong 0$ and so 
		$g(H^n(X \natural X; \mathbb{Z}))= 0$. 
		Again using the previous inequality, 
		we get that 
		$g(K_0(\mathcal{W}(X\natural X))) \le 1$, which contradicts 
		$g(K_0(\mathcal{W}(X\natural X))) = 2$. 
		Therefore, we must have that  $g(K_0(\mathcal{W}(X))) = 0$ and so 
		$K_0(\mathcal{W}(X)) = 0$ as desired. 
	\end{proof} 
	
	In particular, any Weinstein ball $\Sigma^{2n}$ must have 
	$K_0(\mathcal{W}(\Sigma)) = 0$. On the other hand, there are many exotic Weinstein balls $\Sigma^{2n}$ with non-zero symplectic homology \cite{MM}. So their wrapped Fukaya categories are examples of  triangulated categories with non-zero Hochschild cohomology but zero Grothendieck group. We also note that there are examples where the inequality in Corollary \ref{cor: Grothendieck_group} is sharp, e.g.   $\natural^k T^*S^n_{std}$. Conversely, for any integer $j \le k = g(H^n(\natural^k T^*S^n; \mathbb{Z}))$, there is a Weinstein structure $X_j^{2n}$ on $\natural^k T^*S^n$ such that 
	$g(K_0(\mathcal{W}(X_j))) = j$: take $X_j^{2n}$ to be the boundary connected sum of $j$ copies of the standard structure $T^*S^n_{std}$ and $n-j$ copies of the flexible structure $T^*S^n_{flex}$.

	One natural question is what triangulated categories can arise as the wrapped Fukaya category of Weinstein domains. For example, 
	the wrapped Fukaya category of a Weinstein domain is a smooth category with a non-compact Calabi-Yau structure \cite{chantraine_cocores_generate, Ganatra_thesis}.  
	Corollary \ref{cor: Grothendieck_group} further restricts which categories can arise as the Fukaya categories of Weinstein domains and shows that in general the answer depends on the smooth topology of the domain. For example, we have the following result. 
	\begin{corollary} 
		There is no Weinstein ball $\Sigma^{2n}$ such that $D^b(\mathcal{W}(\Sigma^{2n}))$ is exact equivalent  $D^b(\mathcal{W}(T^*S^n_{std}))$. There is no Weinstein  structure $X^{2n}$ on $T^*S^n$  such that 
		$D^b(\mathcal{W}(X^{2n}))$ is exact equivalent to 
		$D^b(\mathcal{W}(T^*S^n_{std} \natural T^*S^n_{std}))$. 
	\end{corollary}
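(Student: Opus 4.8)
The plan is to use the Grothendieck group $K_0$ as the distinguishing invariant, since it is manifestly preserved under exact equivalence of triangulated categories: an exact equivalence $D^b(\mathcal{W}(A)) \simeq D^b(\mathcal{W}(B))$ induces a group isomorphism $K_0(\mathcal{W}(A)) \cong K_0(\mathcal{W}(B))$, and in particular equates the minimal numbers of generators, $g(K_0(\mathcal{W}(A))) = g(K_0(\mathcal{W}(B)))$, since $g$ of an abelian group is an isomorphism invariant. So it suffices to compute $K_0$ (and its minimal generator count) on the two sides of each claimed equivalence and derive a contradiction using Corollary \ref{cor: Grothendieck_group}.

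For the first statement, suppose there were a Weinstein ball $\Sigma^{2n}$ with $D^b(\mathcal{W}(\Sigma^{2n}))$ exact equivalent to $D^b(\mathcal{W}(T^*S^n_{std}))$. Since $\Sigma^{2n}$ is a ball it is contractible, so $H^n(\Sigma^{2n}; \mathbb{Z}) = 0$, and Corollary \ref{cor: Grothendieck_group} gives $K_0(\mathcal{W}(\Sigma^{2n})) = 0$. On the other hand, as recorded in Example \ref{ex: TS^n_order}, $K_0(\mathcal{W}(T^*S^n_{std})) \cong \mathbb{Z}$ (equivalently, the inequalities of Equation \ref{eqn:grot_inequality} are equalities for $\natural^k T^*S^n$ when $k = 1$). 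An exact equivalence would force $0 \cong \mathbb{Z}$, a contradiction, so no such ball exists.

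For the second statement, suppose $X^{2n}$ is a Weinstein structure on $T^*S^n$ with $D^b(\mathcal{W}(X^{2n}))$ exact equivalent to $D^b(\mathcal{W}(T^*S^n_{std} \natural T^*S^n_{std}))$. Since $X^{2n}$ is diffeomorphic to $T^*S^n$, we have $H^n(X^{2n}; \mathbb{Z}) \cong H^n(S^n; \mathbb{Z}) \cong \mathbb{Z}$, so $g(H^n(X^{2n}; \mathbb{Z})) = 1$ and Corollary \ref{cor: Grothendieck_group} yields $g(K_0(\mathcal{W}(X^{2n}))) \le 1$; that is, $K_0(\mathcal{W}(X^{2n}))$ is cyclic. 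But the target is $\natural^2 T^*S^n$, for which the discussion following Equation \ref{eqn:grot_inequality} shows $g(K_0(\mathcal{W}(\natural^2 T^*S^n))) = 2$: the surjection $(\phi_1, \phi_2)$ onto $\mathbb{Z}^2$ forces $g \ge 2$, while Corollary \ref{cor: generators_fukaya_cat} forces $g \le 2$. A cyclic group cannot be isomorphic to one requiring two generators, so the claimed equivalence is impossible.

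The content here is entirely front-loaded into Corollary \ref{cor: Grothendieck_group} and the rank computation for $\natural^k T^*S^n$; the only thing to verify independently is the formal functoriality statement that exact equivalences of triangulated categories preserve $K_0$ together with its minimal generator count, which is immediate. So there is no serious obstacle beyond invoking these earlier results: the proof is a short Grothendieck-group obstruction argument, and its interest lies in exhibiting how the purely smooth-topological bound of Corollary \ref{cor: Grothendieck_group} rigidly constrains which derived wrapped Fukaya categories can be realized by a given diffeomorphism type.
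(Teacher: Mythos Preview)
Your proof is correct and follows essentially the same approach as the paper: both arguments use the Grothendieck group (via its minimal generator count $g(K_0)$) as the distinguishing invariant, invoking Corollary \ref{cor: Grothendieck_group} for the upper bound and the rank computation for $\natural^k T^*S^n_{std}$ for the lower bound. The only cosmetic difference is that for the first claim you compare $K_0$ directly ($0$ versus $\mathbb{Z}$) rather than $g(K_0)$ ($0$ versus $1$), which amounts to the same thing.
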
 
	\begin{proof}
		As noted above, $g(K_0(\mathcal{W}(T^*S^n_{std}))) = 1$ and 
		$g(K_0(\mathcal{W}(T^*S^n_{std} \natural T^*S^n_{sdtd}) ) ) =2$. However if $\Sigma^{2n}$ is a ball, 
		$g(K_0(\mathcal{W}(\Sigma^{2n}))) = 0$; if  $H^n(X; \mathbb{Z}) \cong \mathbb{Z}$,  $g(K_0(\mathcal{W}(X))) \le 1$. 
	\end{proof}
	
	On the other hand, for any Weinstein ball $\Sigma^{2n}$, the Weinstein domain $T^*S^n_{flex} \natural \Sigma^{2n}$ is a Weinstein structure on $T^*S^n$  with the same wrapped Fukaya category as $\Sigma^{2n}$. Hence the class of categories arising as Fukaya categories of Weinstein structures on $T^*S^n$ is genuinely larger than that for a ball $B^{2n}$. Similarly, for any Weinstein structure $X^{2n}$ on $T^*S^n$, the boundary connected sum $T^*S^n_{flex} \natural X^{2n}$ is a Weinstein structure on $T^*S^n \natural T^*S^n$ with the same wrapped Fukaya category as $X^{2n}$.

	Since Weinstein domains are constructed by attaching handles along Legendrians, Corollary \ref{cor: Grothendieck_group} has implications for J-holomorphic curve invariants of Legendrians.  Given a Legendrian sphere $\Lambda^{n-1}$ in a contact manifold $(Y^{2n-1}, \xi)$ with a Weinstein filling $W^{2n}$, there are (at least) two associated Legendrian isotopy invariants: the Chekanov-Eliashberg algebra $CE(\Lambda)$ of $\Lambda$ (augmented by the filling $W^{2n}$) and the wrapped Floer cochains $CW(C, C)$ of the co-core $C^n$ of the Weinstein $n$-handle $H^n_{\Lambda}$ in the Weinstein domain $W^{2n} \cup H^n_{\Lambda}$. For both invariants, we work over a common ground field $\mathbb{K}$. The former invariant is only rigorously defined when $(Y^{2n-1}, \xi)$ is  $P^{2n-2} \times \mathbb{R}$ for some exact symplectic manifold $P$ 	 \cite{EES}; the latter is always defined.	
	A proof was sketched in \cite{BEE12} that these two invariants are quasi-isomorphic and for the results in the rest of this section, we will assume this.
	
	\begin{remark}
		Alternatively, let $CF(D^n, D^n; (W, \Lambda))$ denote the Floer cochains of the linking disk $D^n$ of $\Lambda$ in the partially wrapped Fukaya category of $W^{2n}$ stopped at $\Lambda$; a proof was sketched in \cite{EL} that this is quasi-isomorphic to the version of $CE(\Lambda)$ with coefficients in $C(\Omega S^{n-1})$, chains on the based loop space of $S^{n-1}$. By \cite{ganatra_generation}, 
	$CF(D^n, D^n; (W, \Lambda)) \otimes_{C(\Omega S^{n-1})} \mathbb{K}$ quasi-isomorphic to $CW(C,C)$ and so this invariant can be considered as a rigorous replacement for $CE(\Lambda)$; using this alternative invariant, all our results have complete proofs. 
\end{remark}
	
	Certain geometric properties of a Legendrian have algebraic consequences for its Chekanov-Eliashberg DGA. For example, an exact Lagrangian filling of $\Lambda$ induces an \textit{augmentation} of $CE(\Lambda)$, i.e. a differential graded algebra (DGA) map
	$CE(\Lambda) \rightarrow \mathbb{K}$, 	
	where the latter has the zero differential  and is concentrated in degree zero \cite{Ekholm_Honda_Kalman_cob}. 
	However, not all augmentations come from exact Lagrangian fillings \cite{etnyre_ng_survey} and furthermore, there are examples of Legendrians such that $CE(\Lambda)$ is not acyclic but admits no augmentations. 
	More generally, we can consider $n$-dimensional \textit{representations} of $CE(\Lambda)$, i.e.  DGA maps $CE(\Lambda) \rightarrow \mbox{Mat}(n, \mathbb{K})$.
	There are examples \cite{DR_Golovko_estimating, Sivek_maximal} of  Legendrians for which $CE(\Lambda)$ has a 2-dimensional representation but no augmentations. 
	This is a useful notion since	Dimitroglou-Rizell and Golovko \cite{DR_Golovko_estimating} showed that Legendrians with finite-dimensional representations have an Arnold-type lower bound on the number of Reeb chords. On the other hand, they showed that for each $n \ge 1$, there is a Legendrian $\Lambda \subset  (\mathbb{R}^{2n-1}, \xi_{std})$ such that $CE(\Lambda)$ is not acyclic but has no finite-dimensional representations (although any non-acyclic DGA has an infinite-dimensional ``representation" to its characteristic algebra \cite{ng_computable} ). 	
	These examples are obtained by spinning a particular 1-dimensional Legendrian studied by Sivek \cite{Sivek_maximal}, who proved that it has no finite-dimensional representations  by explicit calculation. We now show that such Legendrians occur generally. 
	
	Consider a Legendrian sphere $\Lambda$ in $(S^{n-1} \times S^n, \xi_{std}) = \partial(B^{2n}_{std}\cup H^{n-1})$, $n \ge 3$. In this case, there is a unique $\mathbb{Z}$-grading on $CE(\Lambda)$. Suppose furthermore that $\Lambda$ has algebraic intersection number one with $\{p\}\times S^n$ for some $p \in S^{n-1}$, i.e. 
	$[\Lambda] = \pm 1 \in H_{n-1}(S^{n-1}\times S^n; \mathbb{Z}) \cong \mathbb{Z}$ is primitive in homology. 
	This implies that $[\Lambda] =1 \in H_{n-1}(B^{2n}_{std}\cup H^{n-1}; \mathbb{Z}) \cong \mathbb{Z}$  and hence $\Lambda$ has no exact Lagrangian fillings in $B^{2n}_{std}\cup H^{n-1}$ for purely topological reasons. Hence there are no augmentations of $CE(\Lambda)$ that come from fillings. Our next result shows that $CE(\Lambda)$ has no augmentations at all and in fact, no finite-dimensional representations. 	 
	\begin{corollary}\label{cor: no_aug}
		If $\Lambda^{n-1} \subset (S^{n-1} \times S^n, \xi_{std}), n \ge 3,$ is Legendrian sphere that is primitive in homology, 		 then  $CE(\Lambda)$ has no finite-dimensional representations and no DGA maps to a commutative ring. 
	\end{corollary}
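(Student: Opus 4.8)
The plan is to promote $\Lambda$ to the attaching sphere of a critical Weinstein handle, assemble a homology ball, and convert the vanishing of its Grothendieck group (Corollary \ref{cor: Grothendieck_group}) into an algebraic obstruction for $CE(\Lambda)$. Concretely, set $W^{2n} := (B^{2n}_{std} \cup H^{n-1}) \cup H^n_\Lambda$, the Weinstein domain obtained by attaching the critical handle $H^n_\Lambda$ along $\Lambda \subset (S^{n-1}\times S^n, \xi_{std}) = \partial(B^{2n}_{std}\cup H^{n-1})$. Since $\Lambda$ is primitive in $H_{n-1}$, the cellular boundary $C_n \to C_{n-1}$ of $W$ (built from one $0$-cell, one $(n-1)$-cell, one $n$-cell) is multiplication by $[\Lambda] = \pm 1$, hence an isomorphism. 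Therefore $W$ is an integral homology ball and in particular $H^n(W;\mathbb{Z}) = 0$, so Corollary \ref{cor: Grothendieck_group} gives $K_0(\mathcal{W}(W)) = 0$.

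Next I would identify $\mathcal{W}(W)$ with modules over $CE(\Lambda)$. The handle $H^n_\Lambda$ contributes the unique index $n$ critical point of $W$, so by the generation result of \cite{chantraine_cocores_generate, ganatra_generation} its co-core $C$ alone generates $D^b\mathcal{W}(W)$; this yields an exact equivalence $D^b\mathcal{W}(W) \simeq \operatorname{Perf}(CW(C,C))$ under which $C$ corresponds to the free rank-one module. Using the quasi-isomorphism $CW(C,C) \simeq CE(\Lambda)$ of \cite{BEE12} (or the alternative invariant of the preceding remark, which gives complete proofs), we get $\operatorname{Perf}(CE(\Lambda)) \simeq D^b\mathcal{W}(W)$ and hence $K_0(\operatorname{Perf}(CE(\Lambda))) = 0$. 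In particular the class $[CE(\Lambda)]$ of the free rank-one module, which equals $[C]$, vanishes.

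The last step turns a representation into a nonzero $K_0$-class, contradicting this vanishing. Given a DGA map $\rho \colon CE(\Lambda) \to \operatorname{Mat}(m,\mathbb{K})$ with $m \ge 1$, derived base change $- \otimes^{\mathbb{L}}_{CE(\Lambda)} \operatorname{Mat}(m,\mathbb{K})$ sends the generator $CE(\Lambda)$ to $\operatorname{Mat}(m,\mathbb{K})$, which is perfect over itself, and is triangulated, so it restricts to $\operatorname{Perf}(CE(\Lambda)) \to \operatorname{Perf}(\operatorname{Mat}(m,\mathbb{K}))$ and induces a homomorphism $K_0(\operatorname{Perf}(CE(\Lambda))) \to K_0(\operatorname{Perf}(\operatorname{Mat}(m,\mathbb{K})))$ carrying $[CE(\Lambda)] \mapsto [\operatorname{Mat}(m,\mathbb{K})]$. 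Composing with the $\mathbb{K}$-linear Euler characteristic $K_0(\operatorname{Perf}(\operatorname{Mat}(m,\mathbb{K}))) \to \mathbb{Z}$ produces a homomorphism sending $[CE(\Lambda)] \mapsto \dim_{\mathbb{K}}\operatorname{Mat}(m,\mathbb{K}) = m^2 \ne 0$, contradicting $[CE(\Lambda)] = 0$. The case of a DGA map to a nonzero commutative ring $R$ is identical: base change along $CE(\Lambda) \to R$ followed by the rank homomorphism $K_0(\operatorname{Perf}(R)) \to \mathbb{Z}$ at any prime sends $[CE(\Lambda)]$ to the rank of the free rank-one $R$-module, namely $1 \ne 0$, again contradicting $[CE(\Lambda)] = 0$.

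I expect the main obstacle to be the bookkeeping at the interface between the symplectic and algebraic sides, rather than the topology (which is immediate) or the $K_0$-vanishing (which is Corollary \ref{cor: Grothendieck_group}). Specifically, one must verify that under the generation equivalence the co-core $C$ is genuinely identified with the free rank-one $CE(\Lambda)$-module, so that $[C] = [CE(\Lambda)]$, and that the base-change functor is well defined on perfect complexes and computes the stated classes; these require care with units, gradings, and the precise $A_\infty$ versus DGA formulation of $CW(C,C) \simeq CE(\Lambda)$. Once this identification is pinned down, the contradiction is purely formal and uniform in $m$ (and covers augmentations as the case $m = 1$, $R = \mathbb{K}$).
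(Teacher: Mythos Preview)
Your proposal is correct and follows essentially the same route as the paper: form the Weinstein ball $X = B^{2n}_{std} \cup H^{n-1} \cup H^n_\Lambda$, invoke Corollary~\ref{cor: Grothendieck_group} to get $K_0(\mathcal{W}(X)) = 0$, identify $D^b\mathcal{W}(X)$ with twisted complexes over $CE(\Lambda)$ via co-core generation and $CW(C,C)\simeq CE(\Lambda)$, and then derive a contradiction from the induced $K_0$-map coming from a DGA map to $\mathrm{Mat}(m,\mathbb{K})$ or a commutative ring. The paper phrases the final step via the exact functor $H^0(Tw(CE(\Lambda)))\to D^b\mathrm{Proj}(R)$ rather than derived base change on $\mathrm{Perf}$, but this is the same argument.
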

	If $\Lambda$ intersects $\{p\} \times S^n$ \textit{geometrically} once, then $\Lambda$ is a loose Legendrian \cite{Casals_Murphy_front}; see Section \ref{ssec: grad_traj}. 	
	In this case,  $CE(\Lambda)$ is acyclic and hence has no finite-representations for trivial reasons. 
	Corollary \ref{cor: no_aug} generalizes this to the case of  \textit{algebraic} intersection one. 
	The proof of Corollary \ref{cor: no_aug} goes roughly as follows. We first form the Weinstein domain $X^{2n}:=B^{2n}_{std} \cup H^{n-1} \cup H^n_{\Lambda}$; since	$[\Lambda] = \pm 1 \in H_{n-1}(S^{n-1}\times S^n; \mathbb{Z}) \cong \mathbb{Z}$, $X^{2n}$ is smoothly a ball. We then show that if there were an $n$-dimensional representation of $CE(\Lambda)$, then there would be a map $K_0(\mathcal{W}(X)) \rightarrow  K_0(Mat(n,  \mathbb{K}))$ and that  $[Mat(n,\mathbb{K})] \in K_0(Mat(n, \mathbb{K}))$ is in the image of this map. Since 
	$[Mat(n,\mathbb{K})] \in K_0(Mat(n, \mathbb{K}))$ is non-zero,  $K_0(\mathcal{W}(X))$ is also non-zero, a contradiction; we give the full proof in Section \ref{sec: proofs}.
	In fact, our proof shows that there are no DGA maps from $CE(\Lambda)$ to a ring $R$		for which  $[R] \in K_0(R)$ is non-zero. In particular, there are no DGA maps to rings satisfying the invariant basis number property or rank property. 
	We also note that our proof holds if the standard contact structure on $S^{n-1} \times S^n$ is replaced by another contact structure that has a Weinstein filling $W^{2n}$ with $H^*(W^{2n}; \mathbb{Z}) \cong H^*(B^{2n} \cup H^{n-1}; \mathbb{Z})$. 
	Although our proof of Corollary \ref{cor: no_aug} holds only for $n \ge 3$, the $n =2$ case for augmentations is also true  and was proven by Leverson \cite{Leverson_s1s2} using a different approach. When $\Lambda^{n-1}$ is not a sphere, our proof breaks down since we cannot attach a standard $n$-handle along $\Lambda$. However we can attach a generalized handle and so we expect the following result to hold: for any manifold $M^n$ with boundary $\Lambda^{n-1}$, the version of $CE(\Lambda)$ with coefficients in $C_*(\Omega M)$, chains on the based loop space of $M$, has no finite-dimensional representations; see \cite{EL}. If $M^n = D^n$,  $C_*(\Omega M) \cong \mathbb{K}$ and we recover Corollary \ref{cor: no_aug}.
	Finally, we note that a homological condition is necessary since there are Legendrian spheres, like the Legendrian unknot, in $(S^{n-1} \times S^n, \xi_{std})$ that have Lagrangian fillings in $B^{2n}_{std} \cup H^{n-1}$ and hence their Chekanov-Eliashberg DGA's have augmentations; of course, such Legendrians are zero in homology.
	
	Corollary \ref{cor: no_aug} can be used to study the $C^0$-topology of the space of Legendrians. As part of the h-principle for loose Legendrians, Murphy \cite{Murphy11} proved that any Legendrian can be $C^0$-approximated by a loose Legendrian. On the other hand, Dimitroglou-Rizell and Sullivan \cite{Rizell_sullivan_c0} recently used persistent homology to show that loose Legendrians cannot be $C^0$-approximated by certain non-loose Legendrians. More precisely, they showed that if $\Lambda_{loose} \subset    (\mathbb{R}^{2n-1}, \xi_{std})$ is a loose Legendrian and $\Lambda \subset (\mathbb{R}^{2n-1}, \xi_{std})$ can be Legendrian isotoped into the standard contact neighborhood $N(\Lambda_{loose})$ of $\Lambda_{loose}$ 	
	such that the map $i_*: H_{n-1}(\Lambda; \mathbb{Z}/2) \rightarrow
	H_{n-1}(N(\Lambda_{loose}); \mathbb{Z}/2) \cong  H_{n-1}(\Lambda_{loose}; \mathbb{Z}/2)$ is non-zero, then $CE(\Lambda)$ has no augmentations.
	Using Corollary \ref{cor: no_aug}, we give a different proof of a slightly different result.
	\begin{corollary}\label{cor: c0-close}
		If $\Lambda \subset (S^{2n-1}, \xi_{std}), n \ge 3,$ is a Legendrian sphere that can be  Legendrian isotoped into a standard contact neighborhood of the loose Legendrian unknot $\Lambda_{loose}$ and is a primitive homology class in $H_{n-1}(\Lambda_{loose}; \mathbb{Z})$, then 
		$CE(\Lambda)$ has no finite-dimensional representations or DGA maps to a commutative ring. 	
	\end{corollary}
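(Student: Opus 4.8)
The plan is to reduce the statement to Corollary \ref{cor: no_aug} by transporting $\Lambda$ from a neighborhood of the loose unknot in $(S^{2n-1}, \xi_{std})$ into the contact manifold $(S^{n-1}\times S^n, \xi_{std})$, where it becomes a primitive Legendrian sphere. First I would invoke the Weinstein Legendrian neighborhood theorem, which says that the standard contact neighborhood of any Legendrian $(n-1)$-sphere is contactomorphic to the $1$-jet space $J^1(S^{n-1}) = T^*S^{n-1}\times \mathbb{R}$. In particular the neighborhood $N(\Lambda_{loose})$ of the loose unknot and a neighborhood $N(\Lambda_0)$ of a primitive Legendrian sphere $\Lambda_0 \subset (S^{n-1}\times S^n, \xi_{std})$ (for instance one homologous to $S^{n-1}\times \{p\}$, whose existence is exactly the subject of Corollary \ref{cor: no_aug}) are both contactomorphic to $J^1(S^{n-1})$. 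I would fix such a $\Lambda_0$ and a contactomorphism $\phi\colon N(\Lambda_{loose}) \to N(\Lambda_0)$ carrying $\Lambda_{loose}$ to $\Lambda_0$, and then, having isotoped $\Lambda$ into $N(\Lambda_{loose})$, set $\Lambda' := \phi(\Lambda) \subset (S^{n-1}\times S^n, \xi_{std})$.

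Next comes the homological bookkeeping. Since $\phi$ takes $\Lambda_{loose}$ to $\Lambda_0$, it identifies $H_{n-1}(N(\Lambda_{loose})) \cong H_{n-1}(\Lambda_{loose})$ with $H_{n-1}(N(\Lambda_0)) \cong H_{n-1}(\Lambda_0)$, and the latter maps isomorphically onto $H_{n-1}(S^{n-1}\times S^n; \mathbb{Z}) \cong \mathbb{Z}$ precisely because $\Lambda_0$ is primitive. Hence the hypothesis that $[\Lambda]$ is primitive in $H_{n-1}(\Lambda_{loose}; \mathbb{Z})$ translates into $[\Lambda']$ being primitive in $H_{n-1}(S^{n-1}\times S^n; \mathbb{Z})$, exactly the hypothesis of Corollary \ref{cor: no_aug}. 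Because $n \ge 3$, the sphere $S^{n-1}$ is simply connected and $c_1$ vanishes in both ambient manifolds, so the canonical $\mathbb{Z}$-grading on the Chekanov-Eliashberg DGA is unambiguous and is preserved by $\phi$.

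The heart of the argument, and the step I expect to require the most care, is the identification $CE(\Lambda) \cong CE(\Lambda')$ as differential graded algebras. Both $\Lambda$ and $\Lambda'$ lie in copies of $J^1(S^{n-1}) = P \times \mathbb{R}$ with $P = T^*S^{n-1}$, so their DGAs are rigorously defined in the sense of \cite{EES} and may be computed entirely within the jet neighborhood; the contactomorphism $\phi$ then induces a graded DGA isomorphism between the locally computed algebras. What must be verified is that the invariant $CE(\Lambda')$ attached to $\Lambda' \subset (S^{n-1}\times S^n, \xi_{std})$ used in the proof of Corollary \ref{cor: no_aug} — which via \cite{BEE12} is identified with the wrapped Floer cochains of the co-core of $H^n_{\Lambda'}$ — agrees with this locally computed DGA, i.e. that the relevant Reeb chords and holomorphic disks do not escape the standard neighborhood. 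This locality of the Chekanov-Eliashberg DGA inside a jet neighborhood is the essential point, and it is also what legitimizes comparing the invariant across the two different ambient manifolds $(S^{2n-1}, \xi_{std})$ and $(S^{n-1}\times S^n, \xi_{std})$.

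Granting this identification, the conclusion is immediate: Corollary \ref{cor: no_aug} applies verbatim to the primitive Legendrian sphere $\Lambda' \subset (S^{n-1}\times S^n, \xi_{std})$ and shows that $CE(\Lambda')$ admits no finite-dimensional representations and no DGA maps to a commutative ring. Since $CE(\Lambda) \cong CE(\Lambda')$, the same holds for $CE(\Lambda)$, as desired. I would remark that this route explains why the hypothesis is phrased in terms of primitivity in $H_{n-1}(\Lambda_{loose}; \mathbb{Z})$ rather than the mod-$2$ condition of \cite{Rizell_sullivan_c0}: the integral primitivity is precisely what is needed to make $\Lambda'$ a generator of $H_{n-1}(S^{n-1}\times S^n; \mathbb{Z})$ so that the domain $B^{2n}_{std}\cup H^{n-1}\cup H^n_{\Lambda'}$ of Corollary \ref{cor: no_aug} is smoothly a ball.
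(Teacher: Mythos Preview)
Your overall strategy --- transport $\Lambda$ into $(S^{n-1}\times S^n, \xi_{std})$ and invoke Corollary~\ref{cor: no_aug} --- matches the paper's. The gap is precisely the step you flag as the heart of the argument: the identification $CE(\Lambda) \cong CE(\Lambda')$ via locality in a jet neighborhood. This locality is false. The invariant at stake (which the paper takes to be $CW(C,C)$ via \cite{BEE12}) depends on the ambient contact manifold and its Weinstein filling, not merely on a contact neighborhood of the Legendrian. A contactomorphism of neighborhoods preserves only the contact \emph{structure}, not the contact form, so the ambient Reeb dynamics bear no relation to those of $J^1(S^{n-1})$. Concretely, take $\Lambda$ to be the zero section of $J^1(S^{n-1})$: the locally computed DGA has no generators and is just the ground ring, but when this is embedded as the standard unknot in $(S^{2n-1}, \xi_{std})$ one obtains $CW(C,C) \simeq C_*(\Omega S^n)$ in $T^*S^n_{std}$, which is far from trivial. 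So ``the relevant Reeb chords and holomorphic disks do not escape the standard neighborhood'' already fails in the most basic example, and an arbitrary identification of jet neighborhoods yields no relation between the filled invariants.

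The paper does not use locality. Instead it realizes $(S^{2n-1}, \xi_{std})$ as the result of contact surgery on $(S^{n-1}\times S^n, \xi_{std})$ along a specific loose, primitive Legendrian sphere $A$: since $B^{2n}_{std}\cup H^{n-1}\cup H^n_A$ is flexible it is Weinstein homotopic to $B^{2n}_{std}$. A stabilized Reeb push-off $B$ of $A$, together with its standard neighborhood $V$, survives the surgery and becomes the loose unknot $\Lambda_{loose}$ with neighborhood $V'$ in $(S^{2n-1}, \xi_{std})$. The hypothesis $\Lambda \subset V'$ then defines a corresponding $\Lambda_0 \subset V \subset (S^{n-1}\times S^n, \xi_{std})$, and the crucial point is that $A$ is loose \emph{in the complement of $\Lambda_0$} (its loose chart is disjoint from $V$ by construction). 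The quasi-isomorphism $CE(\Lambda_0) \simeq CE(\Lambda)$ then follows from the theorem \cite{BEE12, Lazarev_flexible_fillings} that handle attachment along a Legendrian which is loose in the complement of $\Lambda_0$ leaves its DGA unchanged. This invariance under loose surgery --- not any locality --- is the mechanism, and the hypothesis that $\Lambda$ lies near a \emph{loose} Legendrian is used exactly to guarantee that $\Lambda_0$ misses the loose chart of $A$.
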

	Hence the size of contact neighborhoods depends on  Legendrian isotopy class. In the proof of Corollary \ref{cor: c0-close}, the condition that $\Lambda$ is in a contact neighborhood of a loose Legendrian is used to show that a related Legendrian is  disjoint from the loose chart of another loose Legendrian. The homological condition is used to construct a Weinstein ball $X^{2n}$ and the fact that there is a disjoint loose chart in a particular Legendrian implies that 
	$D^b\mathcal{W}(X)$ is equivalent to $H^0(Tw(CE(\Lambda)))$; as in Corollary \ref{cor: no_aug}, this implies that $CE(\Lambda)$ has no finite-dimensional representations or DGA maps to a commutative ring. We note that some homology condition is  necessary
	since otherwise any Legendrian in $(S^{2n-1}, \xi_{std})$ can be isotoped into a neighborhood of any other Legendrian.

	Corollaries \ref{cor: no_aug}, \ref{cor: c0-close} place strong restrictions on the Chekanov-Eliashberg DGA's of certain Legendrians. Furthermore, if these Legendrians satisfy  stronger conditions, e.g. have \textit{geometric} intersection one with $\{p\} \times S^n$ instead of \textit{algebraic} intersection one, then they are loose, showing that there is not much room for interesting Legendrians. Nonetheless, we show that there are many examples of such Legendrians with non-trivial DGA's, essentially one for each exotic Weinstein ball; this shows that Corollaries \ref{cor: no_aug}, \ref{cor: c0-close} are  sharp.  
	\begin{corollary}\label{cor: exotic_leg_no_aug}
		For $n \ge 4$, there exist infinitely many different Legendrian spheres $\Lambda_k \subset (S^{n-1} \times S^n, \xi_{std})$ for which $CE(\Lambda_k)$ is not acyclic but has no  finite-dimensional representations. 
		The same holds for $(S^{2n-1}, \xi_{std}), n \ge 4$. Furthermore, these Legendrians are $C^0$-close to loose Legendrians $\Lambda_{loose}$ and are primitive in $H_{n-1}(\Lambda_{loose}; \mathbb{Z})$. 
	\end{corollary}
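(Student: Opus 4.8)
The plan is to realize the desired Legendrians as the attaching spheres of the \emph{last} handle in a minimal-handle presentation of an exotic Weinstein ball, and then to play the non-vanishing of symplectic homology against the vanishing of $K_0$ that underlies Corollary~\ref{cor: no_aug}. I begin with the $(S^{n-1}\times S^n,\xi_{std})$ case. For $n\ge 4$ there is a sequence of pairwise non-symplectomorphic exotic Weinstein balls $\Sigma^{2n}_k$ with $SH(\Sigma_k)\neq 0$, distinguished by symplectic homology, e.g.\ McLean's examples \cite{MM}; the dimension restriction $n\ge 4$ is exactly what the known constructions of infinitely many exotic balls require. By Example~\ref{ex: ball} each $\Sigma_k$ is Weinstein homotopic to a presentation $B^{2n}_{std}\cup H^{n-1}\cup H^n_{\Lambda_k}$ with critical points of index $0,n-1,n$, where $\Lambda_k\subset\partial(B^{2n}_{std}\cup H^{n-1})=(S^{n-1}\times S^n,\xi_{std})$ is a Legendrian sphere. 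Since $\Sigma_k$ is smoothly a ball, the top handle must cancel the subcritical handle homologically, so $[\Lambda_k]$ is primitive in $H_{n-1}(S^{n-1}\times S^n;\Z)\cong\Z$. Corollary~\ref{cor: no_aug} then immediately yields that $CE(\Lambda_k)$ has no finite-dimensional representations and no DGA maps to a commutative ring.

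The two remaining points are non-acyclicity and the fact that infinitely many of the $\Lambda_k$ are genuinely distinct. For non-acyclicity I would use that the co-core $C_k$ of the unique top-index handle $H^n_{\Lambda_k}$ generates $D^b\mathcal{W}(\Sigma_k)$ \cite{chantraine_cocores_generate, ganatra_generation} and that $CE(\Lambda_k)\simeq CW(C_k,C_k)$ \cite{BEE12}: if $CE(\Lambda_k)$ were acyclic then $C_k$ would be a zero object, $D^b\mathcal{W}(\Sigma_k)$ would be trivial, and hence $SH(\Sigma_k)=0$, contradicting the choice of $\Sigma_k$. For distinctness, a Legendrian isotopy $\Lambda_k\simeq\Lambda_j$ would induce a Weinstein homotopy between $B^{2n}_{std}\cup H^{n-1}\cup H^n_{\Lambda_k}$ and $B^{2n}_{std}\cup H^{n-1}\cup H^n_{\Lambda_j}$, i.e.\ a symplectomorphism $\Sigma_k\cong\Sigma_j$; since the $\Sigma_k$ have pairwise distinct $SH$, the $\Lambda_k$ fall into infinitely many Legendrian isotopy classes.

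For the $(S^{2n-1},\xi_{std})$ case I would feed the same balls into the $T^*S^n$ presentation of Example~\ref{ex: TS^n_decomp}. Set $W_k:=T^*S^n_{flex}\natural\Sigma_k$, a Weinstein structure on $T^*S^n$ with $\mathcal{W}(W_k)\simeq\mathcal{W}(\Sigma_k)$ (the flexible summand contributes a trivial factor) and with $SH(W_k)=SH(\Sigma_k)$, hence pairwise distinct and non-zero. By the modified form of Theorem~\ref{thm: flexible_subdomain} used in Example~\ref{ex: TS^n_decomp}, each $W_k$ is homotopic to $B^{2n}_{std}\cup H^n_{\tilde\Lambda_k}$ for a Legendrian sphere $\tilde\Lambda_k\subset(S^{2n-1},\xi_{std})$ in the formal class of the unknot. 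Because the non-flexible behaviour all comes from $\Sigma_k$ and can be localized in a small chart (this is the mechanism behind Theorem~\ref{thm: flexible_subdomain} and Corollary~\ref{cor: non_loose_link}), the handle $H^n_{\tilde\Lambda_k}$ may be taken attached along a Legendrian that agrees with a loose unknot $\Lambda_{loose}$ outside that chart; thus $\tilde\Lambda_k$ is isotopic into $N(\Lambda_{loose})$ and is primitive in $H_{n-1}(\Lambda_{loose};\Z)$. Corollary~\ref{cor: c0-close} then gives the absence of finite-dimensional representations, while non-acyclicity (again via the single generating co-core and $SH(W_k)\neq 0$) and distinctness (via the distinct $SH(W_k)$) follow exactly as before; the $C^0$-closeness and primitivity asserted in the statement are built into the construction.

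The main obstacle is the non-acyclicity step together with its compatibility with the presentation changes: everything rests on inputs whose wrapped category is genuinely non-trivial, and one must ensure that this non-triviality (witnessed by $SH\neq 0$) survives both the Weinstein homotopy to the three-handle presentation and, in the sphere case, the passage through the $T^*S^n$ presentation. A secondary technical point, which I expect to require the most care, is the explicit realization in the sphere case: one must verify that the single non-flexible handle can truly be attached along a Legendrian localized inside $N(\Lambda_{loose})$ while keeping $[\tilde\Lambda_k]$ primitive, so that the hypotheses of Corollary~\ref{cor: c0-close} are literally met. Once these geometric facts are in place, the representation-theoretic conclusions are formal consequences of Corollaries~\ref{cor: no_aug} and~\ref{cor: c0-close}.
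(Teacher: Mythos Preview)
Your argument for the $(S^{n-1}\times S^n,\xi_{std})$ case --- exotic balls $\Sigma_k$, three-handle presentation from Example~\ref{ex: ball}, primitivity, Corollary~\ref{cor: no_aug}, non-acyclicity and distinctness via $SH(\Sigma_k)$ --- is essentially the paper's proof, with your $CW(C_k,C_k)$ argument standing in for the paper's equivalent Hochschild-homology formulation. However, you omit entirely the $C^0$-closeness claim for this case. It is not automatic: the paper proves it by a separate transversality argument, using that $(S^{n-1}\times S^n,\xi_{std})=\partial(D^*S^{n-1}\times D^2)$, that any $(n-1)$-dimensional Legendrian can be isotoped off a single fibre $T^*S^{n-1}\times\{p\}$ by a dimension count, and that a contact isotopy then pushes the complement into a neighbourhood of another fibre $S^{n-1}\times\{q\}$, which is loose. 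Nothing in your construction supplies this.

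For the $(S^{2n-1},\xi_{std})$ case your route genuinely differs from the paper's and carries a real gap. The paper does \emph{not} pass through $T^*S^n_{flex}\natural\Sigma_k$ and the surjectivity of $\mathcal{H}_{crit}$; it keeps the $\Lambda_k\subset(S^{n-1}\times S^n,\xi_{std})$ already built, picks a loose sphere $A$ that is loose in the complement of $\Lambda_k$, and attaches $H^n_A$ to reach $(S^{2n-1},\xi_{std})$. The extension $\Lambda_k'$ then satisfies $CE(\Lambda_k')\simeq CE(\Lambda_k)$ (loose handle attachment leaves $CE$ unchanged for Legendrians disjoint from the loose chart), and $C^0$-closeness is inherited because $\Lambda_k$ already sat in a neighbourhood of the stabilized Reeb push-off $B$ of $A$, whose extension $B'$ is the loose unknot. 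Your proposed mechanism --- ``$\tilde\Lambda_k$ agrees with $\Lambda_{loose}$ outside a small chart, hence lies in $N(\Lambda_{loose})$'' --- is not a valid implication: agreeing with a Legendrian outside a Darboux ball says nothing about being Legendrian isotopic into its standard neighbourhood (any two Legendrians in the same formal class can be made to agree outside a ball). Without this you cannot invoke Corollary~\ref{cor: c0-close}, and you have no independent proof of the $C^0$-closeness. You could rescue the ``no finite-dimensional representations'' conclusion by bypassing Corollary~\ref{cor: c0-close} entirely --- noting $K_0(\mathcal{W}(W_k))\cong K_0(\mathcal{W}(\Sigma_k))=0$ and rerunning the proof of Corollary~\ref{cor: no_aug} --- but the $C^0$-closeness assertion would remain unproven.
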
 
	The restriction $n \ge 4$ comes from the fact that we currently have examples of exotic Weinstein balls only in such dimensions \cite{MM}.
	The Legendrians $\Lambda_k$ are distinguished by the Hochschild homology of $CE(\Lambda_k)$, which is related to invariants of these Weinstein balls. 	
\\

	Now we give an outline of the rest of the paper. In Section \ref{sec: background}, we provide some background material on Weinstein domains, loose Legendrians, and handle-slides. In Section \ref{sec: proofs}, we give proofs of the results stated in the Introduction.

	\section*{Acknowledgements} 
	We thank Mohammed Abouzaid, Roger Casals, Emmy Murphy, Kyler Siegel, Semon Rezchikov, and Ivan Smith for many helpful discussions. This work was partially supported by an NSF postdoc fellowship. 
	
	\section{Background}\label{sec: background}
	
	In this section, we present some background material, including necessary definitions and theorems that were assumed in the Introduction. 
	
	\subsection{Liouville and Weinstein domains}\label{ssec: domains}

	\subsubsection{Definitions}
	A \textit{Liouville domain} is a pair $(W^{2n}, \lambda)$ such that \begin{itemize}
		\item $W^{2n}$ is a compact manifold with boundary
		\item $d\lambda$ is a symplectic form on $W$ 
		\item the Liouville field $X_\lambda$, defined by $i_{X}d\lambda = \lambda$, is outward transverse along $\partial W$.
	\end{itemize}
	A \textit{Weinstein domain} is a triple $(W^{2n}, \lambda, \phi)$ such that
	\begin{itemize}
		\item $(W, \lambda)$ is a Liouville domain
		\item $\phi: W \rightarrow \mathbb{R}$ is a Morse function with maximal level set $\partial W$ 
		\item $X_\lambda$ is a gradient-like vector field for $\phi$.
	\end{itemize}
	Liouville and Weinstein \text{cobordisms} are defined similarly.  
	
	Since $W$ is compact and $\phi$ is a Morse function with maximal level set $\partial W$, $\phi$ has finitely many critical points. We will call $\phi$ a Weinstein Morse function. Note that for any regular value $c$, $W^c = \{\phi \le c\}$ is also a Weinstein domain and is called a Weinstein \textit{subdomain}.
	
	If $\Sigma^{2n-1} \subset (W^{2n}, \lambda)$ is a hypersurface such that $X_\lambda$ is transverse to $\Sigma$, then $\ker (\lambda|_\Sigma)$ is a contact structure on $\Sigma$. In the Weinstein case, a regular level set $Y^c = \phi^{-1}(c)$ of $\phi$ is such a hypersurface and so $(Y^c, \lambda|_{Y^c})$ is a contact manifold. In particular, the boundary $\partial W$ of Liouville or Weinstein domain $W$ has a natural contact structure given by $\xi = \ker (\lambda|_{\partial W})$. The \textit{completion} $\widehat{W}$ of $W$ is the non-compact, exact symplectic manifold obtained by attaching the symplecticization 
	$(\partial W \times [0, \infty), d(e^t \lambda|_{\partial W}))$ of $(\partial W, \xi)$ to $W$. Whenever we speak of the symplectomorphism type of a Weinstein domain, we will mean the symplectomorphism type of its completion.

	\subsubsection{Weinstein handle attachment}
	
	A Weinstein structure yields a special handle-body decomposition for $W$. First, recall that $\lambda$ vanishes on the $X_\lambda$-stable disc $D_p$ of a critical point $p$; see \cite{CE12}. In particular, $D_p$ is isotropic with respect to $d\lambda$ and so all critical points of $\phi$ have index less than or equal to $n$. If all critical points of $\phi$ have index \textit{strictly less} than $n$, then the Weinstein domain is \textit{subcritical}.  
	
	Since $\lambda$ vanishes on $D_p$, then  $\Lambda_p: = D_p \cap Y^c \subset (Y^c, \lambda|_{Y^c})$ is an isotropic sphere, where $c = \phi(p) - \epsilon$ for sufficiently small $\epsilon$. Furthermore, $\Lambda_p$ comes with a parametrization and framing, i.e. a trivialization of its normal bundle. Note that a framing of $\Lambda_p$ is equivalent to the framing of the conformal symplectic normal bundle of $\Lambda_p$; see \cite{Gbook}. Hence parametrized Legendrians come with a canonical framing.
	
	Suppose that $c_1 < c_2$ are regular values of $\phi$ and 
	$W^{c_2} \setminus W^{c_1}$ contains a unique critical point $p$ of $\phi$. Then $W^{c_2} \backslash W^{c_1}$ is an elementary Weinstein cobordism between $Y^{c_1}$ and $Y^{c_2}$ and the symplectomorphism type of $W^{c_2}$ is determined by the symplectomorphism type of $W^{c_1}$ along with the framed isotopy class of the isotropic sphere $\Lambda_p \subset Y^{c_1}$.
	If  $\phi$ is an arbitrary Weinstein Morse function on $W$ with distinct critical values, then $W$ can be viewed as the concatenation of such elementary Weinstein cobordisms. 
	
	On the other hand, one can explicitly construct such elementary cobordisms and use them to modify Liouville domains. Given a Liouville domain $X$ and a framed isotropic sphere $\Lambda$ in its contact boundary $Y= \partial X$, we can attach an elementary Weinstein cobordism with critical point $p$ and $\Lambda_p = \Lambda$ to $X$ and obtain a new Liouville domain that we denote by $X_\Lambda$ or $X \cup H^k_\Lambda$, where $k = \mbox{ind } p = \dim \Lambda + 1 $.
	This operation is called \textit{Weinstein handle attachment} and $\Lambda$ is called the \textit{attaching sphere} of the Weinstein handle.
	If $X$ is Weinstein, then so is $X_\Lambda$. 
	If the dimension of $\Lambda \subset Y^{2n-1}$ is less than $n-1$, the handle attachment operation and $\Lambda$ itself are all called \textit{subcritical}.
	So any  (subcritical) Weinstein domain can be obtained by  attaching (subcritical) Weinstein handles to the standard Weinstein structure on $B^{2n}$. 
	
	The corresponding modification of contact manifolds by Weinstein handle attachment is called \textit{contact surgery}. If $\Lambda \subset (Y, \xi)$ is a framed isotropic sphere, then there exists an elementary Weinstein cobordism $W$ with $\partial_- W  =  (Y, \xi)$ and attaching sphere $\Lambda$. Then we say $\partial_+W$ is the result of contact surgery on $\Lambda$ and denote this by $Y_\Lambda$ or $Y \cup H^k_\Lambda$. 
	In particular, the contact boundary of any (subcritical) Weinstein domain can be obtained by doing (subcritical) contact surgery to $(S^{2n-1}, \xi_{std}) = \partial B^{2n}$.

	\subsubsection{Weinstein homotopies}\label{sssec: handleslides}
	
	The natural notion of equivalence between Weinstein structures
	$(W, \lambda_0, \phi_0),$ $(W, \lambda_1, \phi_1)$ on a fixed manifold $W$ is a 
	\textit{Weinstein homotopy}, i.e. a 1-parameter family
	of Weinstein structures $(W, \lambda_t, \phi_t)$, 
	$t\in [0, 1],$ connecting them, where $\phi_t$ is allowed to have birth-death critical points. Weinstein homotopic domains have exact  symplectomorphic completions \cite{CE12}. 
	
	We will prove our main result Theorem \ref{thm: flexdomainhandle} by starting with an arbitrary Weinstein domain and then applying a special Weinstein homotopy. As in the smooth setting, Weinstein homotopies consist of three elementary moves:  doing an isotopy of the attaching spheres through isotropic submanifolds, moving critical points that are not connected by gradient trajectories past each other, and sliding handles of the same index over each other. The only difference between the Weinstein and smooth setting is the first move: in the Weinstein case, the isotopies of attaching spheres must be through isotropics instead of arbitrary embedded spheres. Since subcritical handles satisfy an h-principle \cite{CE12}, Weinstein domains are essentially characterized by their index $n$ handles, in particular the Legendrian attaching spheres of these critical handles. Therefore, it suffices to see how these moves affect Legendrians. 
	
	The first move implies that if $\Lambda_1, \Lambda_2$ are isotopic Legendrians in $\partial W$, then $W\cup H_{\Lambda_1}^n$ and $W\cup H_{\Lambda_2}^n$ are Weinstein homotopic. 
	The second move implies that if $\Lambda_1, \Lambda_2$ are disjoint Legendrians in $\partial W$ (which is true by dimension reasons if they are in general position), then $(W\cup H_{\Lambda_1}^n) \cup H_{\Lambda_2}^n$ and 
	$(W\cup H_{\Lambda_2}^n) \cup H_{\Lambda_1}^n$ are Weinstein homotopic. In particular, we can write the resulting Weinstein domain as $W\cup H_{\Lambda_1}^n \cup H_{\Lambda_2}^n$ without any parentheses and it will be well-defined up to Weinstein-homotopy.
	
	We now discuss the last move, the handle-slide, which will be 
	the most important for us. We will study Legendrians via their front projection. If $\Lambda \subset (\mathbb{R}^{2n+1}, \xi_{std}) = \mathbb{R}^n \times \mathbb{R}^n \times \mathbb{R}^1$, the front projection of $\Lambda$ is the image of $\Lambda$ in 
	$\mathbb{R}^{n+1}$ under the projection to the first $\mathbb{R}^n$ and $\mathbb{R}^1$ components. Handles-slides were described in terms of front projections by Casals and Murphy \cite{Casals_Murphy_front}. 
	\begin{proposition}\label{prop: handleslide}[Proposition 2.4 of \cite{Casals_Murphy_front}]
		Let $(Y, \xi)$ be a contact manifold and $ \Lambda, \Sigma \subset (Y, \xi)$ be 
		two disjoint Legendrian submanifolds such that $\Lambda$ is a sphere. Suppose there exists a Darboux chart $U$ where the front projections of $\Sigma, \Lambda$ look as in the left-hand-side of Figure 1. Then for sufficiently small $\epsilon>0$, the Legendrians $\Sigma$ and $h_{\Lambda, \epsilon}(\Sigma)$ presented in Figure \ref{fig: handslide_model} are Legendrian isotopic in the surgered contact manifold $Y_{\Lambda}$.
		\begin{figure}
			\centering
			\includegraphics[scale=0.2]{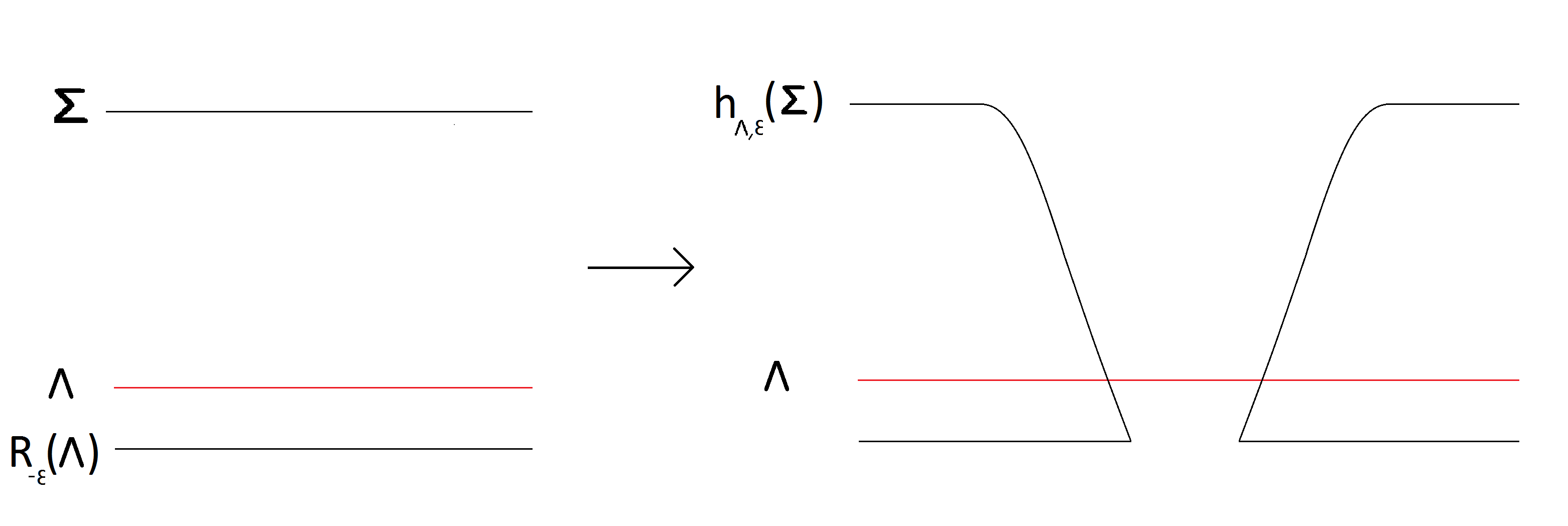}
			\caption{Front projection of handle-slide $h_{\Lambda}(\Sigma)$ of $\Sigma$ over $\Lambda$.}
			\label{fig: handslide_model}
		\end{figure}
	\end{proposition}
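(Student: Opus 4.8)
The plan is to realize the handle-slide as a Legendrian connected sum of $\Sigma$ with a parallel copy of $\Lambda$, and then to cancel that parallel copy inside the surgered manifold $Y_\Lambda$, where it bounds the core disk of the attached handle. First I would reduce to a local problem. Since $\Lambda$ and $\Sigma$ are disjoint, the contact surgery on $\Lambda$ is supported in a neighborhood of $\Lambda$, and by construction $\Sigma$ and $h_{\Lambda,\epsilon}(\Sigma)$ agree outside the Darboux chart $U$ of Figure \ref{fig: handslide_model}. Hence it suffices to work in a standard model: identify a neighborhood of $\Lambda$ with the attaching region of a Weinstein handle, so that $\Lambda$ is the standard Legendrian attaching sphere of $H^n_\Lambda$ (here we use that $\Lambda$ is a sphere, so the handle is a genuine critical handle with a disk core) and $\Sigma \cap U$ consists of the sheets passing through $U$ drawn on the left of Figure \ref{fig: handslide_model}.

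Next I would identify the model for $h_{\Lambda,\epsilon}(\Sigma)$ itself. Reading off the right-hand side of Figure \ref{fig: handslide_model}, the front of $h_{\Lambda,\epsilon}(\Sigma)$ is obtained from that of $\Sigma$ by attaching, along a thin neck of width $\epsilon$, a scaled copy of the front of $\Lambda$; that is, up to Legendrian isotopy $h_{\Lambda,\epsilon}(\Sigma)$ is the Legendrian connected sum $\Sigma \,\#\, \Lambda'$, where $\Lambda'$ is a Legendrian pushoff of $\Lambda$ taken with respect to its canonical framing. This is precisely the Legendrian analogue of the smooth handle-slide formula, in which the attaching sphere of the slid handle is replaced by its connected sum with a parallel copy of the attaching sphere of the handle it is slid over.

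The heart of the argument is then to show $\Sigma \,\#\, \Lambda' \simeq \Sigma$ as Legendrians in $Y_\Lambda$. Contact surgery on $\Lambda$ attaches the handle $H^n_\Lambda$, whose core is an isotropic disk $D$ with $\partial D = \Lambda$, so that after surgery the Legendrian pushoff $\Lambda'$, transported across the boundary of the handle, bounds a parallel copy of $D$ inside $Y_\Lambda$. Thus $\Lambda'$ becomes a small, standardly embedded Legendrian unknot sphere bounding an isotropic disk. Since such a standard unknot is the unit for the Legendrian connected sum, I would construct the explicit isotopy that shrinks the $\Lambda'$ summand along its bounding disk over the handle and retracts the neck, returning $\Sigma \,\#\, \Lambda'$ to $\Sigma$.

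The main obstacle I expect is this last step: producing the explicit Legendrian isotopy through the surgery region and checking that the front remains a bona fide Legendrian front throughout. In particular one must control the behavior near the cusp edges of the front of $\Lambda$ and track how the neck passes over the handle as $\epsilon \to 0$. Upgrading the statement ``$\Lambda'$ bounds the core disk'' from a smooth fact to a controlled family of \emph{Legendrian} fronts is where the real work lies, and is presumably the reason the conclusion is stated only for sufficiently small $\epsilon$.
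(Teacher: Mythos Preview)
The paper does not actually prove this proposition; it is quoted as Proposition~2.4 of Casals--Murphy \cite{Casals_Murphy_front} and used as a black box. The only indication the paper gives of the argument is the sentence in Remark~\ref{rem: link_handleslide}: ``The isotopy in Proposition~\ref{prop: handleslide} is local since it is obtained by pushing a small disk of $\Sigma_1$ (starting from the chart $U_1$) past the belt sphere of $\Lambda$ in $Y_{\Lambda}$.''

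Your proposal is correct and is essentially the same isotopy described from the other end. You write $h_{\Lambda,\epsilon}(\Sigma)\simeq \Sigma\,\#\,\Lambda'$ with $\Lambda'$ a Reeb pushoff, and then in $Y_\Lambda$ you shrink $\Lambda'$ across the parallel copy of the core disk it bounds, undoing the connected sum. The paper's one-line description instead starts from $\Sigma$ and pushes a small disk of it through the handle, across the belt sphere; as that disk emerges on the far side of the handle it sweeps out exactly the extra $\Lambda'$ copy, producing $h_{\Lambda,\epsilon}(\Sigma)$. These are the same Legendrian isotopy run in opposite directions. Your connected-sum viewpoint has the advantage of making transparent why the result is $\Sigma\,\#\,R_{-\epsilon}(\Lambda)$ in the front picture; the ``push past the belt sphere'' viewpoint makes the locality of the isotopy (used crucially in Remark~\ref{rem: link_handleslide} for links) immediate. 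The technical concern you flag at the end---controlling the front near cusps as the neck passes over the handle---is exactly the content one has to verify, and is handled in the cited source.
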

	Here $R_{-\epsilon}(\Lambda)$ is the image of $\Lambda$ under the negative time $\epsilon$ Reeb flow. We also note that the Legendrians in Figure \ref{fig: handslide_model} are extended by spherical symmetry out of the page. Furthermore, we note that the Darboux chart must have sufficient size so that front projections depicted in Figure \ref{fig: handslide_model} make sense; in particular, the size of the chart in the $y_i$ direction must be at least as big as the slope of the front projection of $h_{\Lambda, \epsilon}(\Lambda)$. 
	For us, the key implication of Proposition \ref{prop: handleslide} is that $W \cup H_\Lambda^n \cup H^n_\Sigma$  is Weinstein homotopic to 
	$W \cup H^n_\Lambda \cup  H_{h_\Sigma(\Lambda)}^n$ (and also to 
	$W \cup   H_{h_\Sigma(\Lambda)}^n \cup H^n_\Lambda$ by the above discussion).
	
	\begin{remark}\label{rem: link_handleslide}
		Proposition \ref{prop: handleslide} also holds if $\Sigma = \Sigma_1 \coprod \cdots \coprod \Sigma_k$ is a Legendrian link with several components. We inductively construct the new handle-slid link and show that it is isotopic to $\Sigma$ in $Y_\Lambda$. 
		We first take sufficiently small $\epsilon_1> 0$  so that $\Sigma$ is disjoint from an $\epsilon_1$-neighborhood of $\Lambda$ in $J^1(\Lambda) \subset Y$. We also take $U_1$ so that $\Sigma_1\cap U_1, \Lambda\cap U_1$ look as in the left-hand-side of Figure \ref{fig: handslide_model} and $\Sigma_i \cap U_1  = \emptyset$ for $i \ge 2$. Then we can handle-slide  $\Sigma_1$ over $\Lambda$ via $U_1$ and the resulting Legendrian $h_{\Lambda, \epsilon_1}(\Sigma_1)$ is isotopic to $\Sigma_1$ in $Y_{\Lambda}$ by Proposition \ref{prop: handleslide}. In fact, something stronger holds. The isotopy in Proposition \ref{prop: handleslide} is local since it is obtained by pushing a small disk of $\Sigma_1$ (starting from the chart $U_1$) past the belt sphere of $\Lambda$ in $Y_{\Lambda}$. Therefore since $\Sigma_2, \cdots, \Sigma_k$ are disjoint from an $\epsilon_1$-neighborhood of $\Lambda$ in $Y$ and the chart $U_1$,  the handle-slid Legendrian $h_{\Lambda, \epsilon_1}(\Sigma_1)$ is isotopic to $\Sigma_1$ in $Y_{\Lambda} \backslash (\Sigma_2 \coprod \cdots \coprod \Sigma_k)$, where we view $\Sigma_2, \cdots, \Sigma_k$ as Legendrians of $Y_{\Lambda}$. Hence the link 
		$h_{\Lambda, \epsilon_1}(\Sigma_1) \coprod \Sigma_2 \coprod \cdots \coprod \Sigma_k$ is isotopic to $\Sigma_1 \coprod \Sigma_2 \coprod \cdots \coprod \Sigma_k$ in $Y_{\Lambda}$. Now we build the rest of the handle-slid link by induction and show that it is isotopic to the original link $\Sigma$ at each stage. Namely, suppose we have constructed the $i$th link $h_{i}(\Sigma):=h_{\Lambda, \epsilon_1} (\Sigma_1) \coprod \cdots \coprod h_{\Lambda, \epsilon_i} (\Sigma_i) \coprod \Sigma_{i+1} \coprod \cdots \coprod \Sigma_k$ and proved that it is isotopic to $h_{i-1}(\Sigma)$ in $Y_{\Lambda}$. Next we construct $h_{i+1}(\Sigma):= h_{\Lambda, \epsilon_1} (\Sigma_1) \coprod \cdots \coprod h_{\Lambda, \epsilon_i} (\Sigma_i) \coprod h_{\Lambda, \epsilon_{i+1}} (\Sigma_{i+1}) \coprod \Sigma_{i+2} \coprod \cdots \coprod \Sigma_k$ by taking sufficiently small $\epsilon_{i+1} < \epsilon_j$ for all $j\le i$ 
		and a chart $U_{i+1}$ disjoint from $h_{i}(\Sigma) \backslash \Sigma_{i+1}$ such that $\Sigma_{i+1}, \Lambda$ appear in $U_{i+1}$ as in Figure \ref{fig: handslide_model}. As explained above, the new link 
		$h_{i+1}(\Sigma)$ is Legendrian isotopic to the previous link $h_{i}(\Sigma)$ in $Y_{\Lambda}$ since $h_{i}(\Sigma) \backslash \Sigma_{i+1}$ is disjoint from $U_{i+1}$ and $h_i(\Sigma)$ is disjoint from an $\epsilon_{i+1}$-neighborhood of $\Lambda$ (since the Legendrians in $h_{i}(\Sigma)$ are at most $\epsilon_{i}$-close to $\Lambda$), which proves the inductive $i+1$ case. For $i = k$, we get the desired Legendrian  $h_k(\Sigma)$ which is isotopic to $\Sigma$ in $Y_{\Lambda}$ by induction. 
		This implies that $W \cup H^n_{\Lambda} \cup H^n_{\Sigma_1} \cup \cdots \cup  H^n_{\Sigma_k}$ is Weinstein homotopic to 
		$W \cup H^n_{\Lambda} \cup H^n_{h_{\Lambda}(\Sigma_1)} \cup \cdots \cup  H^n_{h_{\Lambda}(\Sigma_k)}$, a fact that we will use repeatedly later. 
	\end{remark}

	We also note that the handle-slide depend on more than just the data of $\Sigma$ and $\Lambda$. The resulting Legendrian depend crucially on the choice of  chart $U$ where $\Lambda, \Sigma$ appear as in the left-hand-side of Figure \ref{fig: handslide_model}. In particular, different chart choices can result in Legendrians $h_{\Lambda, \epsilon}(\Sigma)$ that are not Legendrian isotopic in $Y$ while still being Legendrian isotopic in $Y_\Lambda$. 
	
	\begin{examples}
		We start with a Legendrian link consisting of two linked unknots in $(\mathbb{R}^{2n-1}, \xi_{std})$, with one Legendrian the Reeb push-off of the other Legendrian; see 
		Figure \ref{fig: handslide_examples}. The two light-blue boxes are the Darboux charts used in the handleslides. 
		In the top row, the handle-slide produces a linked pair of Legendrian unknots (which can be seen by doing a Legendrian Reidermeister move), i.e $h^{top}_{\Lambda_{unknot}}(\Lambda_{unknot}) = \Lambda_{unknot}$. 
		In the bottom row, the handle-slide results in a link where one of the Legendrians is loose, i.e. $h^{bottom}_{\Lambda_{unknot}}(\Lambda_{unknot}) = \Lambda_{loose}$.  
		The dark blue box is the loose chart of this Legendrian; see Section \ref{subsection: loose} for definition. Since the Legendrian unknot is not loose, the handle-slid Legendrians $h^{top}_{\Lambda_{unknot}}(\Lambda_{unknot}), h^{bottom}_{\Lambda_{unknot}}(\Lambda_{unknot})$ are not isotopic in the original contact manifold $(\mathbb{R}^{2n-1}, \xi_{std})$. Of course, these Legendrians
		are both isotopic in the surgered manifold $Y_{\Lambda_{unknot}}$ since they are both isotopic to the push-off of the attaching sphere there, i.e the image of $\Lambda_{unknot}$ in $Y_{\Lambda_{unknot}}$. 
	\end{examples}

	\begin{figure}
		\hspace*{-01.3cm}  
		\includegraphics[scale=0.19]{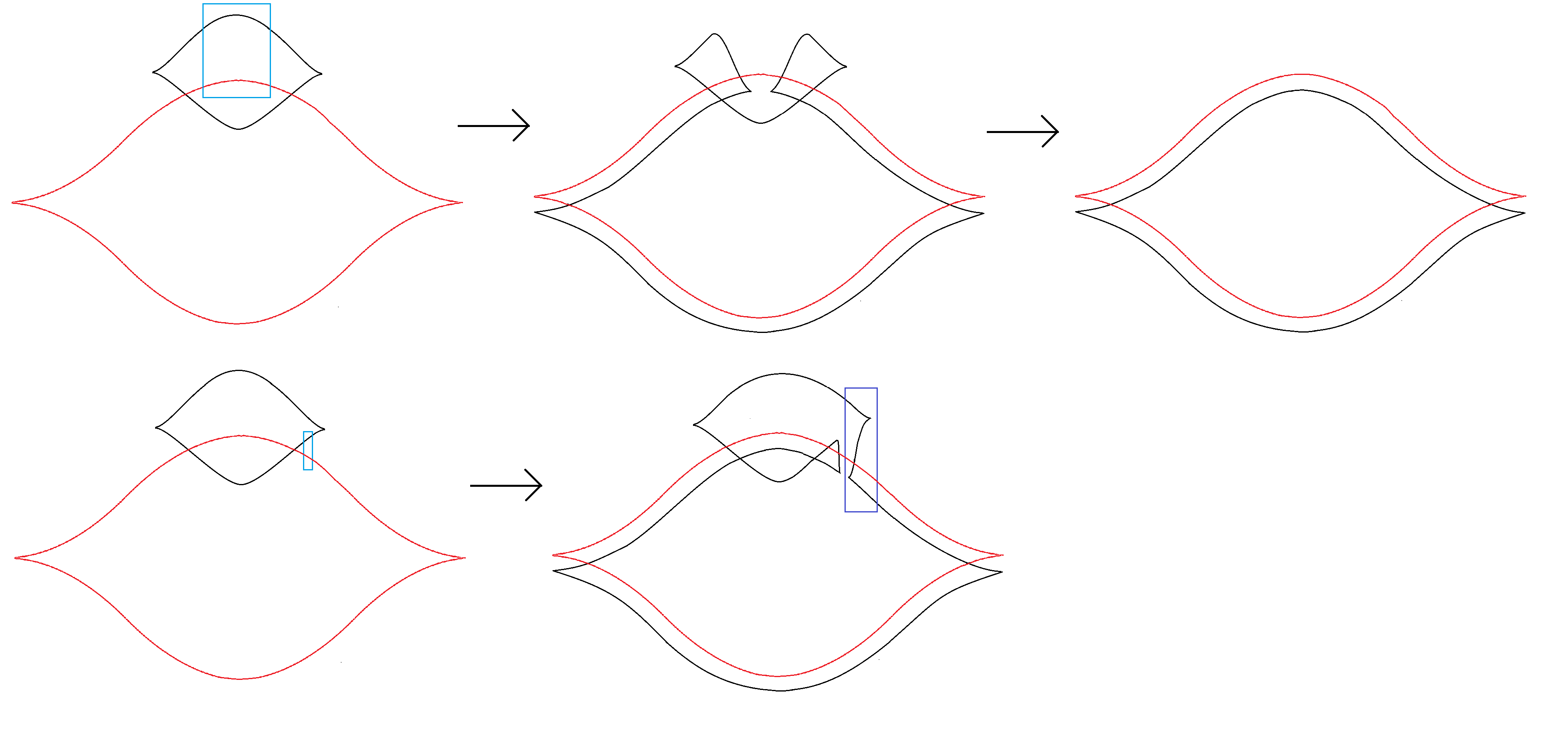}
		\caption{Handle-slides using different charts result in non-isotopic Legendrians.}              
		\label{fig: handslide_examples}
	\end{figure}

	\subsection{Loose Legendrians and flexible Weinstein domains}\label{subsection: loose}
	
	There exist many Legendrians with rich symplectic topology invisible from the point of view of algebraic topology. On the other hand, Murphy  \cite{Murphy11} showed that exists a certain class of \textit{loose} Legendrians which satisfy a h-principle and whose symplectic topology is governed by their underlying algebraic topology.
	There are several equivalent criteria for a Legendrian to be loose, all of which depend the existence of a certain local model inside this Legendrian. We will use the following local model from Section 2.1 of \cite{CMP}. 
	Let $B^3 \subset (\mathbb{R}^3, \xi_{std} = \ker \alpha_{std})$ be a unit ball and let $\Lambda_0$ be the 1-dimensional Legendrian whose front projection is shown in Figure \ref{fig: stabilizationgeo}. Let $Q^{n-2}, n \ge 3,$ be a closed manifold
	and $U$ a neighborhood of the zero-section $Q \subset T^*Q$. Then 
	$\Lambda_0 \times Q \subset (B^3 \times U, \mbox{ker}(\alpha_{std} + \lambda_{std}))$ is a Legendrian submanifold. 
	This Legendrian is the \textit{stabilization} over $Q$ of 
	the Legendrian $\{y = z = 0 \} \times Q \subset (B^3 \times U, \mbox{ker}(\alpha_{std} + \lambda_{std}))$.
	\begin{definition}\label{def: loose}
		A Legendrian $\Lambda^{n-1} \subset (Y^{2n-1}, \xi), n \ge 3,$ is \textit{loose} if there is a neighborhood $V\subset (Y, \xi)$ of $\Lambda$ such that $(V, V\cap \Lambda)$ is contactomorphic to $(B^3 \times U, \Lambda_0 \times Q)$.
	\end{definition}
	\begin{remark}\label{rem: looseleg}
		If $f: (U^{2n-1}, \xi_1) \rightarrow (V^{2n-1}, \xi_2)$ is an 
		equidimensional contact embedding and $\Lambda \subset (U, \xi_1)$ is loose, then $f(\Lambda) \subset (V, \xi_2)$ is also loose.
	\end{remark}

	\begin{figure}
		\centering
		\includegraphics[scale=0.15]{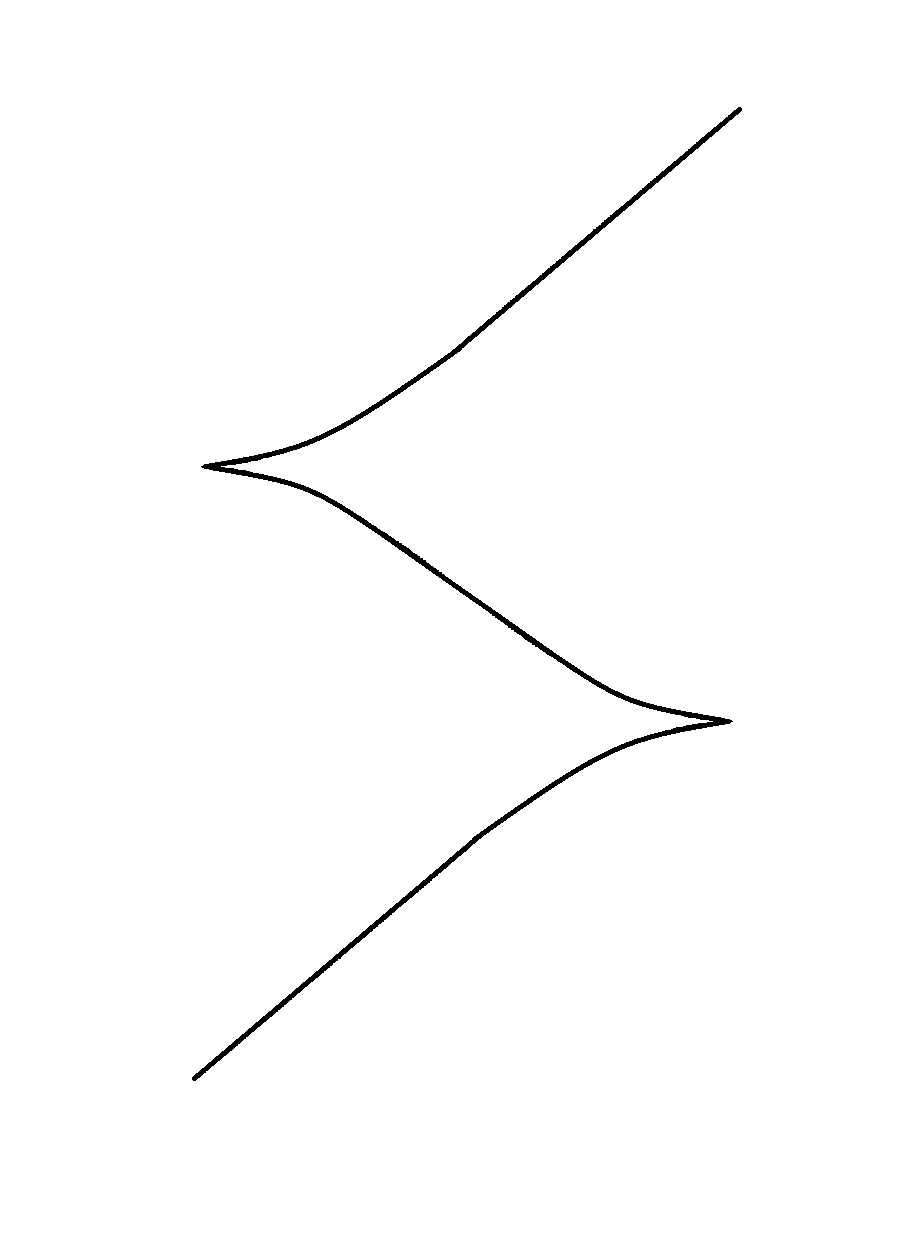}
		\caption{Front projection of $\Lambda_0$.}
		\label{fig: stabilizationgeo}
	\end{figure}

	A \textit{formal Legendrian embedding} is an embedding $f: \Lambda \rightarrow (Y, \xi)$ together with a homotopy of bundle monomorphisms $F_s: T\Lambda \rightarrow TY$ covering $f$ for all $s$ such that $F_0 = df$ and $F_1(T\Lambda)$ is a Lagrangian subspace of $\xi$ with its conformal symplectic structure.  A \textit{formal Legendrian isotopy} is an isotopy through formal Legendrian embeddings. Using these notions, we can state Murphy's h-principle \cite{Murphy11}, which has an existence and uniqueness part:
	\begin{itemize}
		\item any formal Legendrian of dimension at least two is formally Legendrian isotopic to a loose Legendrian
		\item any two loose Legendrians that are formally Legendrian isotopic are genuinely Legendrian isotopic. 
	\end{itemize}

	We now define a class of Weinstein domains introduced in \cite{CE12} that are constructed by iteratively attaching Weinstein handles along loose Legendrians.
	
	\begin{definition}\label{def: weinstein_flexible}
		A Weinstein domain $(W^{2n}, \lambda, \phi), n \ge 3,$ is  \textit{flexible} if there exist regular values $c_1, \cdots, c_{k}$ of $\phi$ such that $c_1 < \min \phi < c_2 < \cdots < c_{k-1} < \max \phi < c_{k}$ and for all $i = 1, \cdots, k-1$, $\{c_i \le \phi \le c_{i+1} \}$ is a Weinstein cobordism with a single critical point $p$ whose the attaching sphere $\Lambda_p$ is either subcritical or a loose Legendrian in $(Y^{c_i}, \lambda|_{Y^{c_i}})$.  
	\end{definition} 
	
	Flexible Weinstein \textit{cobordisms} are defined similarly. Also, Weinstein handle attachment or contact surgery is called flexible if the attaching Legendrian is loose. 
	So any flexible Weinstein domain can be constructed by iteratively attaching subcritical or flexible handles to $(B^{2n}, \omega_{std})$.  A Weinstein domain that is Weinstein homotopic to a Weinstein domain  satisfying Definition \ref{def: weinstein_flexible} will also be called flexible. Finally, we note that subcritical domains are automatically flexible.
	
	Our definition of flexible Weinstein domains is a bit different from the original definition in \cite{CE12}, where several critical points are allowed in  $\{c_i \le \phi \le c_{i+1} \}$. 
	There are no gradient trajectories between these critical points and their attaching spheres form a loose \textit{link} in $(Y^{c_i}, \lambda|_{Y^{c_i}})$, i.e each Legendrian is loose in the complement of the others. 
	These two definitions are the same up to Weinstein homotopy. Indeed if we have an ordered collection of Legendrians such that each one is loose in the complement of the previous ones, then we can use the loose Legendrian h-principle to move each Legendrian away from the loose charts of the previous ones so that all Legendrians are loose in the complement of each other.

	Since they are built using loose Legendrians, which satisfy an h-principle, flexible Weinstein domains also satisfy an h-principle as proven by Cieliebak and Eliashberg \cite{CE12}.
	Again, the h-principle has an existence and uniqueness part:
	\begin{itemize}
		\item any almost Weinstein domain of dimension at least six admits a flexible Weinstein structure in the same almost symplectic class
		\item any two flexible Weinstein domains that are almost symplectomorphic are Weinstein homotopic (and hence have exact symplectomorphic completions and contactomorphic boundaries).
	\end{itemize}

	\section{Proofs of Main Results}\label{sec: proofs}
	In this section, we prove the results described in the Introduction. 
	We first prove a simpler version of Theorem \ref{thm: flexible_subdomain} without as much control on the topology of the flexible subdomain. 
	
	\begin{theorem}\label{thm: flexdomainhandle}
		Any Weinstein domain $W^{2n}, n\ge 3,$ can be Weinstein homotoped to a Weinstein domain $V_{flex}^{2n} \cup H^n$ obtained by attaching a single $n$-handle to a flexible Weinstein domain $V_{flex}^{2n}$. 
	\end{theorem}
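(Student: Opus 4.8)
The plan is to reduce to a presentation in which all critical (index $n$) handles are attached to a single subcritical base, and then use handle-slides to render all but one of the attaching Legendrians loose. Concretely, starting from any Weinstein Morse function on $W^{2n}$, reorder the handles by index so that the handles of index $<n$ assemble into a subcritical (hence flexible) Weinstein subdomain $W_{sub}$, while the index $n$ handles are attached along a Legendrian link $\Lambda_1 \coprod \cdots \coprod \Lambda_k \subset \partial W_{sub}$. Since there are no gradient trajectories between critical points of the same index, the second elementary Weinstein homotopy move lets us treat these handles as attached simultaneously, so that $W$ is Weinstein homotopic to $W_{sub} \cup H^n_{\Lambda_1} \cup \cdots \cup H^n_{\Lambda_k}$. (If $k=0$, i.e. $W$ is subcritical, first create a canceling pair of index $n-1, n$ handles by a birth move; the $n$-handle is attached along a loose Legendrian meeting the belt sphere once, and absorbing the $(n-1)$-handle into the subcritical base gives the desired form.) Fix the distinguished Legendrian $\Lambda_k$; the goal is to make each of $\Lambda_1, \ldots, \Lambda_{k-1}$ loose by handle-sliding it over $\Lambda_k$.

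The mechanism is Proposition \ref{prop: handleslide} together with its link version, Remark \ref{rem: link_handleslide}. Taking $\Lambda = \Lambda_k$ (a Legendrian sphere, as required) and $\Sigma = \Lambda_1 \coprod \cdots \coprod \Lambda_{k-1}$, Remark \ref{rem: link_handleslide} produces, for a suitable choice of mutually disjoint Darboux charts $U_1, \ldots, U_{k-1}$ and decreasing parameters $\epsilon_1 > \cdots > \epsilon_{k-1}$, a Weinstein homotopy
\[
W_{sub} \cup H^n_{\Lambda_k} \cup H^n_{\Lambda_1} \cup \cdots \cup H^n_{\Lambda_{k-1}}
\;\simeq\;
W_{sub} \cup H^n_{\Lambda_k} \cup H^n_{\Sigma_1} \cup \cdots \cup H^n_{\Sigma_{k-1}},
\]
where $\Sigma_i = h_{\Lambda_k, \epsilon_i}(\Lambda_i)$. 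Reordering so that the handle on $\Lambda_k$ is attached last, this exhibits $W$ as $V_{flex}^{2n} \cup H^n_{\Lambda_k}$ with $V_{flex}^{2n} := W_{sub} \cup H^n_{\Sigma_1} \cup \cdots \cup H^n_{\Sigma_{k-1}}$. It then remains to guarantee that $V_{flex}$ is flexible, for which it suffices, by Definition \ref{def: weinstein_flexible} and the discussion following it, that the $\Sigma_i$ form a loose link in $\partial W_{sub}$, i.e. each $\Sigma_i$ is loose in the complement of the others.

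The heart of the argument, and the step I expect to be the main obstacle, is the construction of the charts $U_i$ so that each handle-slid Legendrian $\Sigma_i$ is loose. As emphasized in the Introduction, looseness is not automatic: different charts yield handle-slides that are non-isotopic in $\partial W_{sub}$ (though all isotopic after surgery on $\Lambda_k$), and only special charts produce loose Legendrians. I would construct each $U_i$ so that, in the front projection, pushing the relevant disk of $\Lambda_i$ past the belt sphere of $\Lambda_k$ forces the appearance of a zig-zag of exactly the shape of the stabilization model $\Lambda_0$ of Figure \ref{fig: stabilizationgeo}; extending this picture by the spherical symmetry inherent in the handle-slide (noted after Proposition \ref{prop: handleslide}) over an equatorial $S^{n-2} \subset \Lambda_i$ realizes a stabilization $\Lambda_0 \times S^{n-2}$, i.e. a loose chart in the sense of Definition \ref{def: loose}. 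This is precisely the behavior of the bottom row of Figure \ref{fig: handslide_examples} in the model case; the work is to carry it out for an arbitrary $\Lambda_i$ and to verify that the resulting local model is genuinely contactomorphic to $(B^3 \times U, \Lambda_0 \times Q)$.

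Finally, the loose \emph{link} condition is arranged by the same inductive, shrinking-$\epsilon$ scheme already used in Remark \ref{rem: link_handleslide}: choosing the $U_i$ pairwise disjoint and disjoint from the other handle-slid Legendrians, and taking each $\epsilon_i$ small enough that $\Sigma_i$ stays within an arbitrarily small neighborhood of $\Lambda_i \cup \Lambda_k$, keeps the loose chart of each $\Sigma_i$ disjoint from every $\Sigma_j$, $j \ne i$. Hence each $\Sigma_i$ is loose in the complement of the others, $V_{flex}$ is flexible, and $W \simeq V_{flex}^{2n} \cup H^n$, as desired. Note that $\Lambda_k$ itself need not---and in general cannot---be made loose in the complement of the $\Sigma_i$; this is exactly the residual symplectic rigidity that prevents $W$ from being globally flexible.
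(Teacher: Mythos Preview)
Your strategy matches the paper's: make the Weinstein function self-indexing, present $W$ as $W_{sub} \cup H^n_{\Lambda_1} \cup \cdots \cup H^n_{\Lambda_k}$, fix one Legendrian, and handle-slide the others over it so that they become a loose link. You correctly identify the crux---choosing the handle-slide chart so that the result is loose---but you only gesture at it, and the construction you outline is not quite the paper's. Rather than locating a special chart already containing pieces of both $\Lambda_i$ and the fixed Legendrian $\Lambda_k$, the paper first picks a fresh Darboux ball $U_i$ disjoint from \emph{all} the Legendrians, places in it a Legendrian unknot $S_i$ (with a Reidemeister move applied) together with a small negative Reeb push-off $T_i$, and then connect-sums $\Lambda_i$ with $S_i$ and the fixed Legendrian with $T_i$ along isotropic arcs disjoint from the remaining Legendrians. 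Since $S_i, T_i$ are unlinked unknots, the connect-summed link is Legendrian isotopic to the original. The handle-slide is then performed inside $U_i$, and the Reidemeister cusp on $S_i$ combines with the handle-slide cusp to produce the stabilization model $\Lambda_0 \times S^{n-2}$ explicitly in $U_i$. This is the ``work'' you anticipated; without the connect-sum trick it is not clear how to manufacture the required chart for an arbitrary pair $\Lambda_i, \Lambda_k$.

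The same construction also patches a genuine gap in your loose-link argument. You assert that shrinking the $\epsilon_i$ and taking the $U_i$ disjoint keeps the loose chart of each $\Sigma_i$ away from every other $\Sigma_j$, but in your setup each $U_i$ must contain a piece of $\Lambda_k$, and each $\Sigma_j$ contains the Reeb push-off $R_{-\epsilon_j}(\Lambda_k)$ of essentially all of $\Lambda_k$; indeed, the paper explicitly notes that the fixed Legendrian---hence its nearby push-offs---intersects every loose chart produced this way. The paper's iterative scheme avoids this: at stage $j$, the fresh ball $U_j$ is chosen disjoint from the already handle-slid Legendrians, so the new loose chart lying in $U_j$ is automatically disjoint from them. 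This yields only the ordered condition (each $\Sigma_j$ loose in the complement of $\Sigma_2, \ldots, \Sigma_{j-1}$), which then implies the full loose-link condition via the h-principle argument discussed after Definition~\ref{def: weinstein_flexible}.
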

	\begin{remark}
		Theorem \ref{thm: flexdomainhandle} also holds for Weinstein cobordisms.
	\end{remark}
	\begin{proof}[Proof of Theorem \ref{thm: flexdomainhandle}]
		
		Let $W^{2n} = (W^{2n}, \lambda, \phi), n \ge 3,$ be a Weinstein domain. By Lemma 12.20 of \cite{CE12}, we can Weinstein homotope $W$ so that $\phi$ is self-indexing, i.e. if $p$ is a critical point of index $k$, then $\phi(p) = k$. In particular, we can assume that $W$ is the result of attaching $k$ index $n$ handles to a subcritical Weinstein domain $W_{sub}$ along disjoint Legendrians $\Lambda_1,  \cdots, \Lambda_k$. 
		
		If $k = 0$, then $W = W_{sub}  = W_{sub} \cup H^{n-1} \cup H^n$, where $H^{n-1}, H^n$ are two cancelling handles of index $n-1$ and $n$; the domain $W_{sub}\cup H^{n-1}$ is subcritical and hence flexible. If $k = 1$, then $W= W_{sub} \cup H^n_{\Lambda_1}$; again $W_{sub}$ is subcritical and hence flexible. Therefore we can assume $W = W_{sub} \cup H^n_{\Lambda_1}\cup   \cdots \cup H^n_{\Lambda_k}$ for some $k \ge 2$. 
		
		The key step is to handle-slide $H_{\Lambda_2}, \cdots, H_{\Lambda_k}$ over $H_{\Lambda_1}$. We will do this by induction. More precisely, we will prove that for every $j$ with $2 \le j \le k$, $W$ is Weinstein homotopic to $W_{sub} \cup H^n_{\Lambda_1'} \cup  \cdots \cup H^n_{\Lambda_k'}$ for some Legendrian link $\coprod_{i=1}^k \Lambda_i'$ such that $\coprod_{i=2}^j \Lambda_i'$ is a loose link in $\partial W_{sub}$. Then the case $j = k$ completes the proof since then $W$ is Weinstein homotopic to the flexible domain $W_{sub} \cup H^n_{\Lambda_2'} \cup \cdots \cup H^n_{\Lambda_k'}$ with the single handle $H^n_{\Lambda_1'}$ attached. The proof shows that we can assume that $\Lambda_1$ actually stays fixed throughout. 
		
		We first prove the base case $j = 2$. We begin by 
		modifying $\Lambda_1, \Lambda_2$ by Legendrian isotopies that move only a small neighbhorhood of a single point, i.e. the resulting Legendrians are the Legendrian connected sum of $\Lambda_1, \Lambda_2$ with certain Legendrian unknots. More precisely, let $U_2$ be a Darboux ball in the contact manifold $\partial W_{sub}$ that is disjoint from $\Lambda_1 \cup \cdots \cup \Lambda_k$.  Let $S_2$ be a Legendrian unknot in $U_2$ and let $T_2$ be a negative Reeb push-off of $S_2$ also contained in $U_2$ so that $S_2, T_2$ are symplectically unlinked. We apply a Legendrian ``Reidemeister move" to $S_2$ so that it appears as in Figure \ref{fig: twounknots}; this move is a Legendrian isotopy which is contained in $U_2$ and the resulting Legendrian, which we also call $S_2$, is still symplectically unlinked with $T_2$. For one-dimensional Legendrians, this isotopy is the first Reidemeister move and in higher dimenions (as in our situation) it  results in a spun version of this Reidemeister move, although the isotopy is not obtained by spinning the one-dimensional isotopy; see \cite{Casals_Murphy_front} for details on this isotopy.

		Now we choose isotropic arcs $\gamma_1, \gamma_2$ connecting $\Lambda_1$ to $T_2$ and $\Lambda_2$ to $S_2$ respectively. Since these arcs are subcritical, we can assume that they are disjoint; furthermore, we can assume that $\gamma_1$ is disjoint from $\Lambda_i, i \ne 1$ and $\gamma_2$ is disjoint from $\Lambda_i, i \ne 2$. 
		We can also ensure that they intersect $U_2$ as depicted in the left-hand-side of Figure \ref{fig: twounknots}. Let $\Lambda_1' := \Lambda_1 \sharp T_2$ be the Legendrian connected sum of $\Lambda_1$ and $T_2$ along $\gamma_1$; see \cite{Riz} for details about the connected sum operation. Similarly, let $\Lambda_2' := \Lambda_2 \sharp S_2$ be the Legendrian connected sum of $\Lambda_2$ and $S_2$ along $\gamma_2$.  By choice of $\gamma_1, \gamma_2$, the Legendrians $\Lambda_1' \cap U_2, \Lambda_2' \cap U_2$ look as in right-hand-side of Figure \ref{fig: twounknots}. 
		Since $U_2$ is disjoint from $\Lambda_1$ and $T_2$ is a Legendrian unknot in $U_2$, $\Lambda_1'$ is isotopic to $\Lambda_1$; we pull the unknot $T_2$ to $\Lambda_1$ using the isotropic arc $\gamma_1$. 
		Similarly, $\Lambda_2'$ is Legendrian isotopic to $\Lambda_2$. In fact,  the whole Legendrian link $\Lambda_1' \coprod \Lambda_2' \coprod \Lambda_3 \coprod \cdots \coprod \Lambda_k$ is Legendrian isotopic to the link  $\Lambda_1 \coprod \Lambda_2 \coprod \Lambda_3 \coprod \cdots \coprod \Lambda_k$ because 
		$\gamma_1, \gamma_2$ are disjoint from  $\Lambda_3, \cdots, \Lambda_k$ and $S_2, T_2$ are symplectically unlinked in $U_2$. 
		
		\begin{figure}
			\centering
			\includegraphics[scale=0.2]{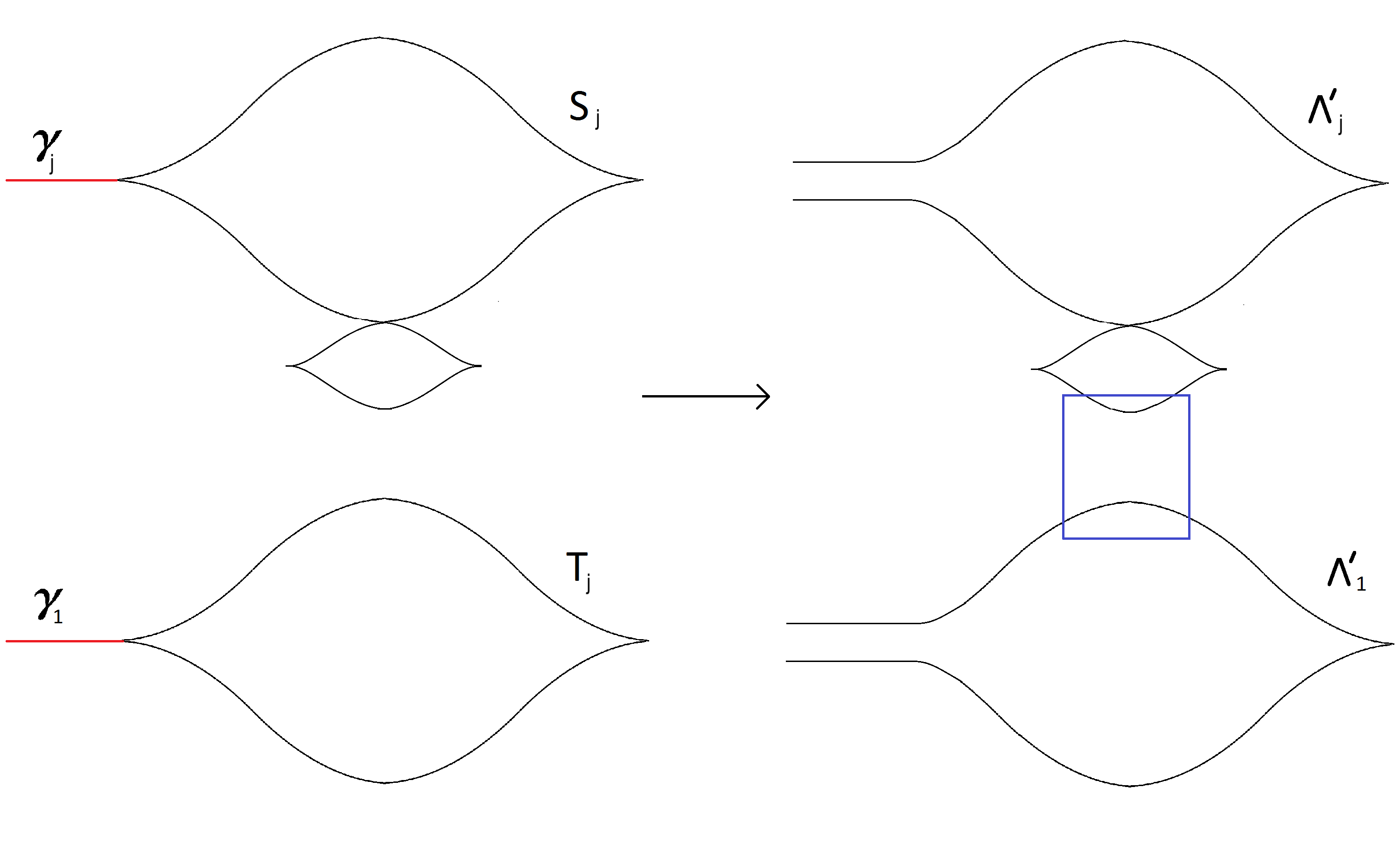}
			\caption{Left-hand figure: front projections of $S_j, T_j$, and isotropic arcs $\gamma_{1}, \gamma_{j}$ (in red) in $U_j$; right-hand figure: front projections of $\Lambda_j'$, the connected sum of $\Lambda_j, \Lambda_1$ and $S_j,T_j$  along $\gamma_j, \gamma_{1}$  respectively; the blue box is the chart we will use to handle-slide $\Lambda_j'$ over $\Lambda_1'$.}
			\label{fig: twounknots}
		\end{figure}

		Now we handle-slide $\Lambda_2'$ over $\Lambda_1'$. 
		We first take sufficiently small  $\epsilon_2 > 0$ so that
		an $\epsilon_2$-neighborhood of $\Lambda_1'$ is disjoint from all other Legendrians. 
		The ball $U_2$ contains a smaller chart $V_2$ where $\Lambda_1', \Lambda_2'$ look as in Figure \ref{fig: handslide_model}; see the blue box in the right-hand side of Figure \ref{fig: twounknots}. So we can use this chart to handle-slide $\Lambda_2'$ over $\Lambda_1'$ and produce $h_{\Lambda_1', \epsilon_2}(\Lambda_2')$; see the Legendrian in black in the right-hand-side of Figure \ref{fig: handleslide}. Then 
		$h_{\Lambda_1', \epsilon_2}(\Lambda_2')$ is isotopic to the Legendrian $\Lambda_2'$ in $\partial(W_{sub} \cup H^n_{\Lambda_1'})$; in fact, the whole link 
		$h_{\Lambda_1', \epsilon_2}(\Lambda_2') \coprod \Lambda_3 \coprod \cdots \coprod \Lambda_k$ is Legendrian isotopic to 
		$\Lambda_2' \coprod \Lambda_3 \coprod \cdots \coprod \Lambda_k$ in $\partial(W_{sub} \cup H^n_{\Lambda_1'})$ as explained in Remark \ref{rem: link_handleslide}.
		In particular,
		$W_{sub} \cup H^n_{\Lambda_1'} \cup H^n_{h_{\Lambda_1', \epsilon_2}(\Lambda_2')} \cup
		H^n_{\Lambda_3} \cup \cdots \cup H^n_{\Lambda_k}$ is Weinstein homotopic to 
		$W_{sub} \cup H^n_{\Lambda_1'} \cup H^n_{\Lambda_2'} \cup H^n_{\Lambda_3} \cup \cdots \cup H^n_{\Lambda_k}$ and hence to $W$. 
		Finally, we note that the size requirement of the Darboux chart for the handle-slide is satisfied in our situation. We can take the bottom branch of $S_2$ and the top branch of $T_2$ to be arbitrarily close so that the slope of the front projection of the handle-slid Legendrian is arbitrarily small; hence the $y_i$ coordinate of the chart can be arbitrarily small for our handle-slide. 
		
		We observe that $h_{\Lambda_1'}(\Lambda_2')$ is loose in $\partial W_{sub}$. The blue box in Figure \ref{fig: handleslide} is the loose chart of  $h_{\Lambda_1', \epsilon_2}(\Lambda_2')$ in $U_2$. Recall that we have spherical symmetry in the handle-slide region so it is loose with $Q^{n-2} = S^{n-2}$; see Definition \ref{def: loose}.
		However, $h_{\Lambda_1', \epsilon_2}(\Lambda_2')$ is not loose \textit{in the complement of} $\Lambda_1'$ since $\Lambda_1'$  intersects the loose chart of $h_{\Lambda_1', \epsilon_2}(\Lambda_2')$. This completes the case $j = 2$. Note that we can extend the Legendrian isotopy of $\Lambda_1'$ back to $\Lambda_1$ to an ambient contact isotopy and hence  assume that $\Lambda_1' = \Lambda_1$.
		
		Now suppose that the $j-1$ case holds for some $j \ge 3$. So we have Weinstein homotoped $W$ to 
		$W_{sub} \cup H^n_{\Lambda_1} \cup \cdots \cup H^n_{\Lambda_k}$ (relabeling the Legendrians) such that $\coprod_{i=2}^{j-1} \Lambda_i$ is a loose link (but not loose in the complement of $\Lambda_1$).
		Again we take a Darboux ball $U_j$ that is disjoint from all the Legendrians and unlinked Legendrian unknots $S_j, T_j \subset U_j$. Then we form $\Lambda_1' := \Lambda_1 \sharp S_j, \Lambda_j' := \Lambda_j \sharp T_j$ using arcs $\gamma_1, \gamma_j$ that are disjoint from the other Legendrians. 
		Then we take sufficiently small $\epsilon_j$ (smaller than the previous $\epsilon_{j-1}$) and use the chart in $U_j$ to handle-slide $\Lambda_j'$ over $\Lambda_1'$ and get a new Legendrian 
		$h_{\Lambda_1'}(\Lambda_j')$.  
		Then by Proposition \ref{prop: handleslide} (and Remark \ref{rem: link_handleslide}), 
		$W_{sub} \cup H^n_{\Lambda_1'} \cup
		H^n_{\Lambda_2} \cup \cdots \cup H^n_{\Lambda_{j-1}} \cup H^n_{h_{\Lambda_1'}(\Lambda_j')} \cup H^n_{\Lambda_{j+1}} \cup \cdots \cup H^n_{\Lambda_k}$ is Weinstein homotopic to 
		$W_{sub} \cup H^n_{\Lambda_1'} \cup
		H^n_{\Lambda_2} \cup \cdots \cup H^n_{\Lambda_{j-1}} \cup H^n_{\Lambda_j'} \cup H^n_{\Lambda_{j+1}} \cup \cdots \cup H^n_{\Lambda_k}$ and hence to $W$. 
		As before, we can see explicitly that $h_{\Lambda_1'}(\Lambda_{j}')$ is loose in $\partial W_{sub}$ (but not in the complement of $\Lambda_1'$ which intersects its loose chart).
		Most importantly the loose chart of  $h_{\Lambda_1'}(\Lambda_{j}')$ is contained in $U_{j}$, which is disjoint from $\Lambda_2, \cdots, \Lambda_{j-1}$. Therefore  $h_{\Lambda_1'}(\Lambda_{j}')$ is loose in the complement of these Legendrians, which form a loose link by the induction hypothesis. So 
		$\Lambda_2 \coprod \cdots \coprod \Lambda_{j-1} \coprod h_{\Lambda_1'}(\Lambda_{j}')$ is also a loose link, which proves the $j$th inductive case. Again by applying an ambient contact isotopy to all the Legendrians, we can assume that $\Lambda_1' = \Lambda_1$. 
		\begin{figure}
			\centering
			\includegraphics[scale=0.2]{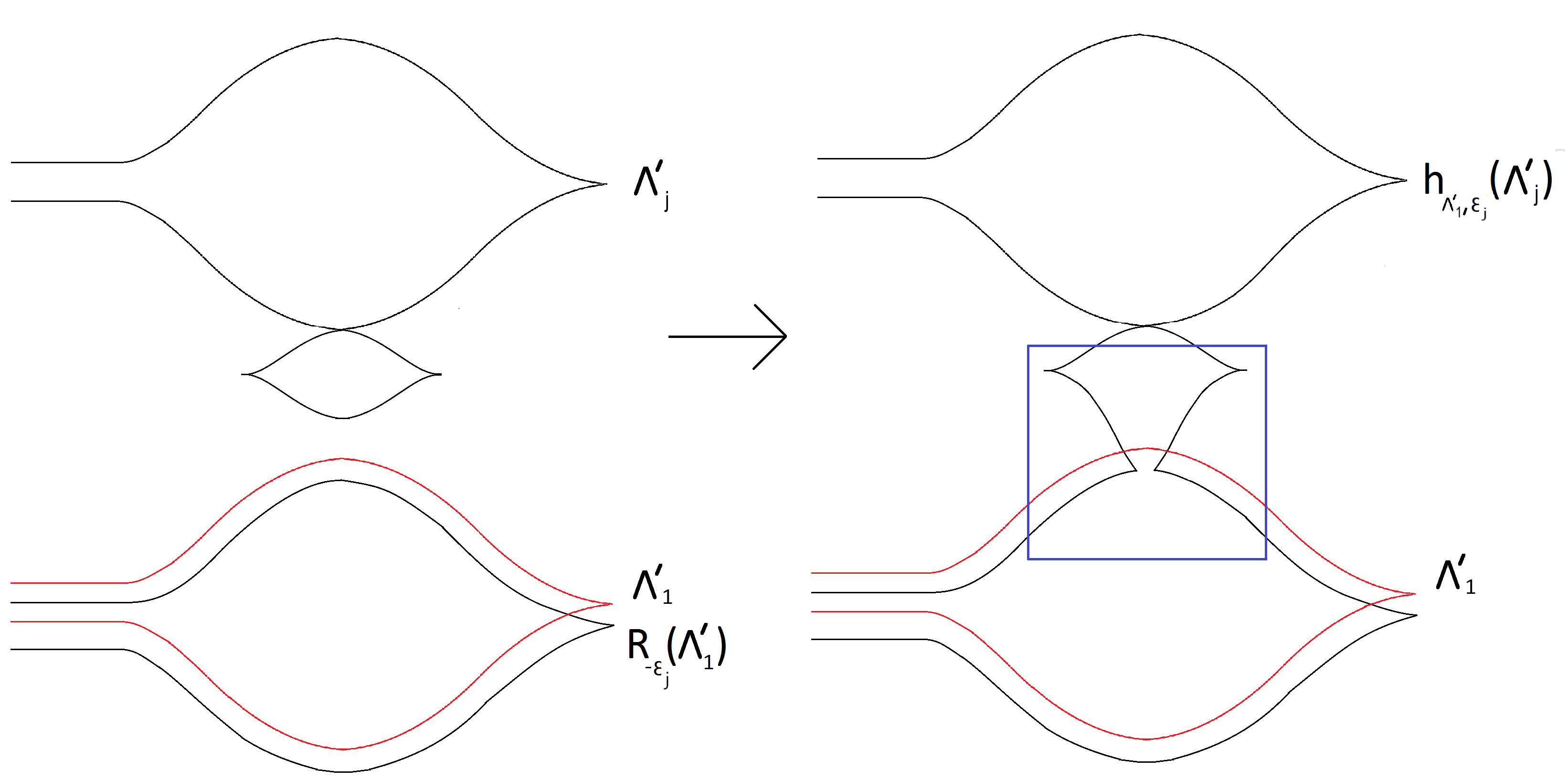}
			\caption{Left-hand figure: front projection of $\Lambda_j', \Lambda_{1}'$, and      $R_{-\epsilon_j}(\Lambda_1')$ in $U_j$; 
				right-hand figure:  front projection of $h_{\Lambda_1', \epsilon_j}(\Lambda_j')$ in $U_j$; the blue box is a loose chart of $h_{\Lambda_1', \epsilon_j}(\Lambda_j')$ in $U_j$.}
			\label{fig: handleslide}
		\end{figure}
	\end{proof}

	Now we give an example illustrating the entire procedure in Theorem \ref{thm: flexdomainhandle}.
	\begin{examples}
		The following example shows that $T^*S^n \natural T^*S^n \natural T^*S^n$, the boundary connected sum of three copies of $T^*S^n$, can be Weinstein homotoped to $W_{flex} \cup H^n$ for some flexible domain $W_{flex}$. We begin with the ``natural" presentation of $T^*S^n \natural T^*S^n \natural T^*S^n$ of the form
		$B^{2n} \cup H^n_{\Lambda_1} \cup H^n_{\Lambda_2} \cup H^n_{\Lambda_3}$, where $\Lambda_1, \Lambda_2, \Lambda_3$ are three unlinked Legendrian unknots in $(S^{2n-1}, \xi_{std})$. In Figure \ref{fig: example_three_connected_sum}, $\Lambda_1$ is in red, $\Lambda_2$ (and its image after handle-slides) is in black, and $\Lambda_3$ (and its image after handle-slides) is in blue. The top diagram in Figure \ref{fig: example_three_connected_sum} denotes the setup after one iteration of the construction; the Legendrians are now 
		$\Lambda_1, h_{\Lambda_1}(\Lambda_2), \Lambda_3$. The middle diagram in Figure \ref{fig: example_three_connected_sum} is the first part of the second iteration when we change $\Lambda_1$ to $\Lambda_1'$ and it bring it closer to $\Lambda_3$. The 
		bottom diagram in Figure \ref{fig: example_three_connected_sum} shows the three Legendrians $\Lambda_1, h_{\Lambda_1}(\Lambda_2), h_{\Lambda_1'}(\Lambda_3)$ after the second iteration of the construction, i.e. handle-sliding $\Lambda_3$ over $\Lambda_1'$.  Then $h_{\Lambda_1}(\Lambda_2), h_{\Lambda_1'}(\Lambda_3)$ form a loose link since $h_{\Lambda_1}(\Lambda_2)$ is a loose Legendrian and 
		$h_{\Lambda_1'}(\Lambda_3)$ is loose in the complement of $h_{\Lambda_1}(\Lambda_2)$. We take $W_{flex}$ to be $B^{2n} \cup H^n_{h_{\Lambda_1}(\Lambda_2)} \cup H^n_{h_{\Lambda_1'}(\Lambda_3)}$. So the original domain $T^*S^n \natural T^*S^n \natural T^*S^n$ is homotopic to $W_{flex} \cup H^n_{\Lambda_1'}$. Note that $h_{\Lambda_1}(\Lambda_2), h_{\Lambda_1'}(\Lambda_3)$ are not loose in the complement of $\Lambda_1'$, which intersects their loose charts. 
		For simplicity's sake,  $W_{flex}$ in this example is not actually $(T^*S^n\natural T^*S^n)_{flex}$; it will have the wrong intersection form (in some dimensions $n$) and so will not even be diffeomorphic to $T^*S^n\natural T^*S^n$. However it is possible to do the construction so that $W_{flex}$ is $(T^*S^n\natural T^*S^n)_{flex} \cup H^n$. 
		\begin{figure}
			\centering
			\includegraphics[scale=0.17]{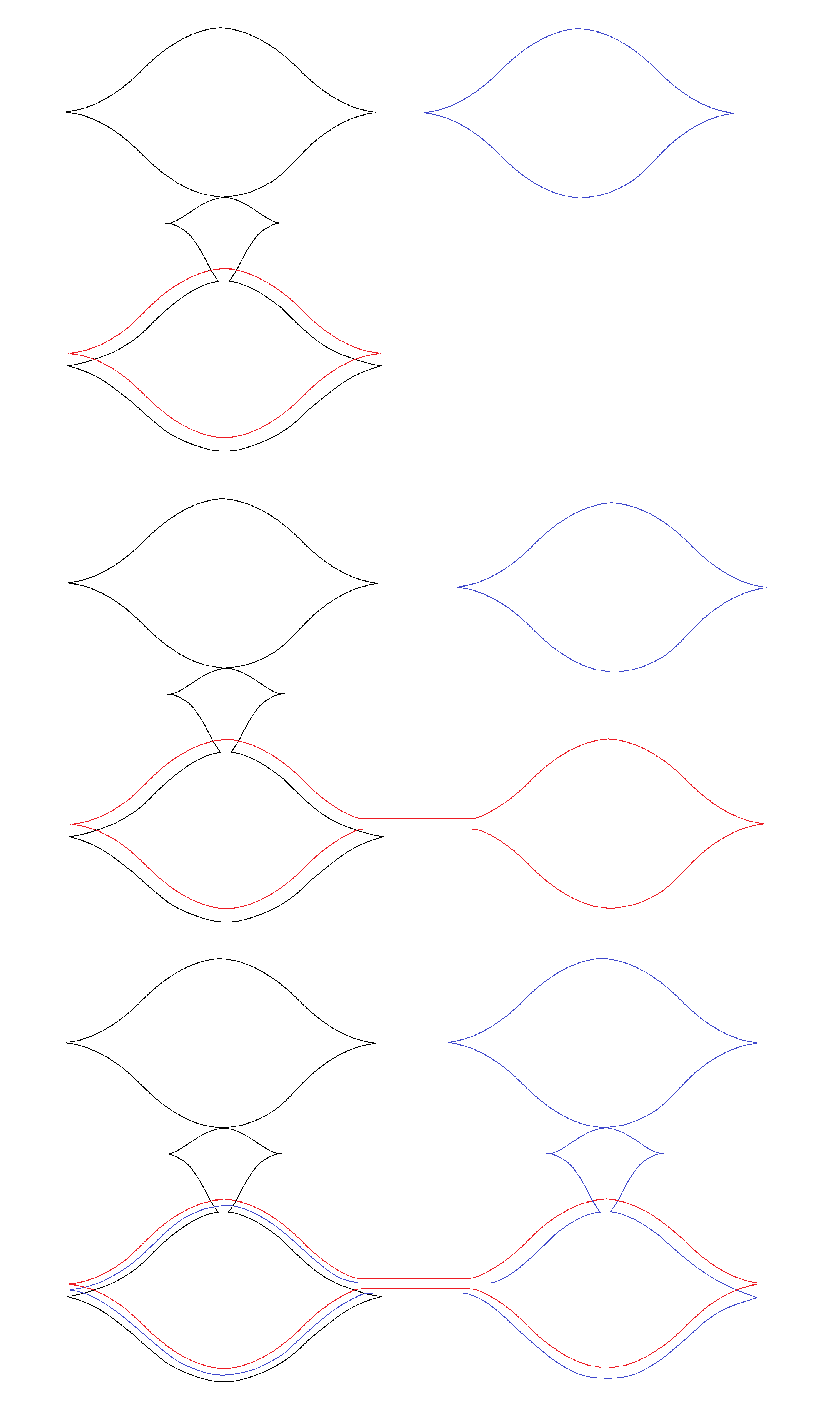}
			\caption{Theorem \ref{thm: flexdomainhandle} applied to  $T^*S^n_{std} \natural T^*S^n_{std} \natural T^*S^n_{std}$.}
			\label{fig: example_three_connected_sum}
		\end{figure}
		
		Although the order in which handles are attached does not affect
		the ambient domain (up to homotopy), it does affect which Weinstein subdomains are produced by a particular Weinstein presentation. To emphasize this, in Figure \ref{fig: cerf_diagram} we have depicted the Cerf diagram of the Weinstein homotopy for  $T^*S^n \natural T^*S^n \natural T^*S^n$ discussed above, i.e. the graph of critical values of the index n critical points of the Weinstein Morse functions $\phi_t$ over the parameter space $t \in [0,1]$. That is, if $p_i, i = 1,2,3,$ is the critical point with attaching sphere $\Lambda_i$  in the regular level set $(S^{2n-1}, \xi_{std})$, then the three line-graphs depict $\phi_t(p_i)$ for $t \in [0,1]$. In Figure \ref{fig: cerf_diagram}, we have labeled the graph of $\phi_t(p_i)$ by its attaching sphere. Handles are attached in order of the critical values of the corresponding critical points, from lowest to highest. At the beginning of the homotopy, $\phi_0(p_2), \phi_0(p_3)$ are greater than $\phi_0(p_1)$ since we need to handle-slide the $\Lambda_2, \Lambda_3$ handles over $\Lambda_1$. These handle-slide moments are depicted by the two vertical blue lines in Figure \ref{fig: cerf_diagram}. After the two handle-slides are performed, the attaching spheres of $p_2, p_3$ become $h_{\Lambda_1}(\Lambda_2), h_{\Lambda_1'}(\Lambda_3)$ respectively, as shown on the right-hand-side of Figure \ref{fig: cerf_diagram}. 
		Away from the handle-slide moments, the homotopy changes the Legendrian attaching spheres just by Legendrian isotopy. 
		Finally, the homotopy makes the critical value of $p_1$ greater than the critical values of $p_1, p_2$, which is possible by the second Weinstein homotopy move (see Section \ref{sssec: handleslides}). 
		As a result, the Weinstein domain $W_{flex}$ with attaching spheres $h_{\Lambda_1}(\Lambda_2), h_{\Lambda_1'}(\Lambda_3)$ is a sublevel set of $\phi_1$ and hence a Weinstein subdomain of  $T^*S^n \natural T^*S^n \natural T^*S^n$.

		\begin{figure}
			\centering
			\includegraphics[scale=0.27]{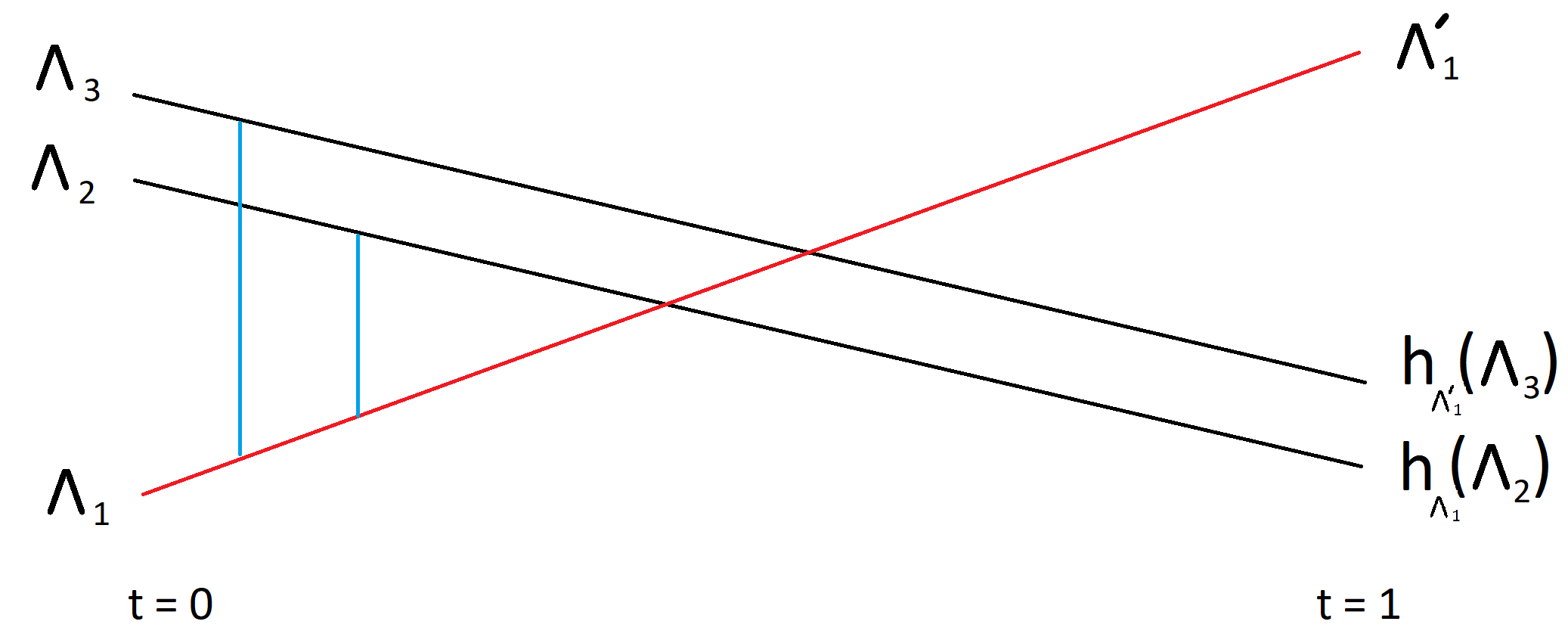}
			\caption{Cerf diagram of the Weinstein homotopy for $T^*S^n_{std} \natural T^*S^n_{std} \natural T^*S^n_{std}$.}
			\label{fig: cerf_diagram}
		\end{figure}
		
	\end{examples}

	Note that the Weinstein homotopy in Theorem \ref{thm: flexdomainhandle} involved just handle-slides. If we first create a pair of symplectically cancelling handles and then handle-slide, we can achieve better control over the topology of the flexible subdomain. This is the approach we will take in the following proof of Theorem \ref{thm: flexible_subdomain}, which shows that $W$ can be homotoped to $W_{flex} \cup C^{2n}$ for some smoothly trivial Weinstein cobordism $C^{2n}$ with two Weinstein handles. For example, this result shows that $T^*S^n \natural T^*S^n \natural T^*S^n$ can be Weinstein homotoped to $(T^*S^n\natural T^*S^n \natural T^*S^n)_{flex} \cup H^{n-1} \cup H^n_{\Lambda}$, where the last two handles are smoothly cancelling.

	\begin{proof}[Proof of Theorem \ref{thm: flexible_subdomain}]
		
		\begin{figure}
			\hspace*{-01.3cm}  
			\includegraphics[scale=0.2]{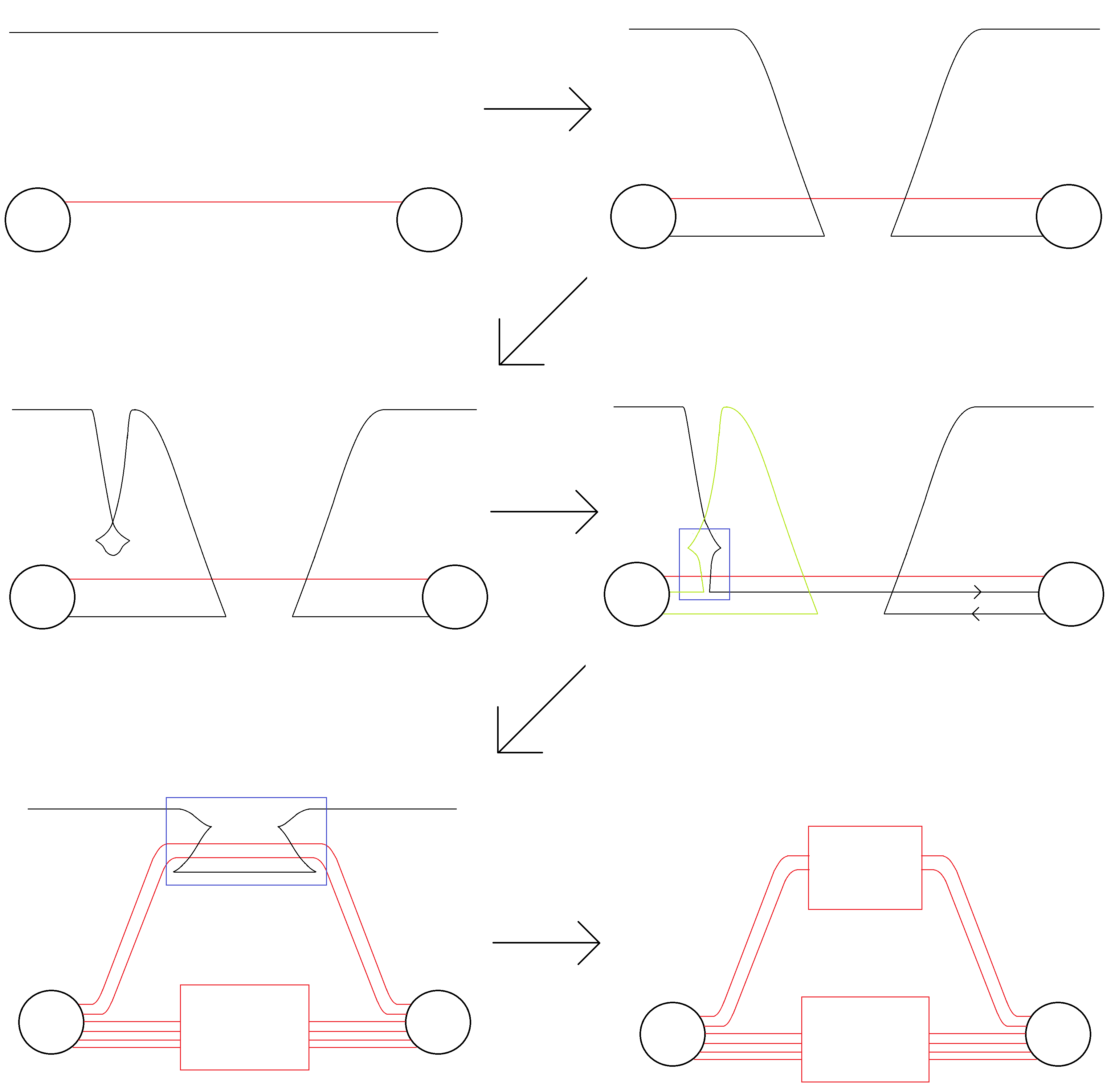}
			\caption{Front projections of $\Lambda_0$ (in red) and $\Lambda_i$ (in black) and their subsequent images under the moves in Theorems \ref{thm: flexible_subdomain} and \ref{thm: intersection points}; the blue box in the fourth, fifth diagrams is the loose chart of $h^2_{\Lambda_0}(\Lambda_i), \phi(h^2_{\Lambda_0}(\Lambda_i))$ respectively; the green portion of the Legendrian in the fourth diagram is the boundary of the Whitney disk between $h^2_{\Lambda_0}(\Lambda_i)$ and the belt sphere of $H^{n-1}$.}\label{fig: smoothisotopy}
		\end{figure}
		We will assume that $W = W_{sub} \cup 
		H^n_{\Lambda_1}\cup   \cdots \cup H^n_{\Lambda_k}$ for $k \ge 1$.  First, we attach a symplectically cancelling pair of index $n-1, n$ handles $H^{n-1}, H_{\Lambda_0}^n$ to $W$ in a small Darboux chart $B^{2n}$ so that $W = 
		W \natural (B^{2n} \cup H^{n-1} \cup H^n_{\Lambda_0}) = W_{sub}   \cup (H^{n-1} \cup H^n_{\Lambda_0}) \cup H^n_{\Lambda_1}\cup   \cdots \cup H^n_{\Lambda_k}$.
		Now we proceed as in the proof of Theorem \ref{thm: flexdomainhandle} with slight modifications. We first bring all the $\Lambda_i, i \ge 1,$ close to $\Lambda_0$ by taking $U_i$ in the proof of Theorem \ref{thm: flexdomainhandle} to be contained in $\partial B^{2n}$.
		The main difference from before is that now we do two handle-slides of $\Lambda_i, i \ge 1, $ over $\Lambda_0$, which produces the Legendrian $h_{\Lambda_0}^2(\Lambda_i)$. 
		Before doing the second handle-slide, we perform a Reidermeister move 
		which ensures that the second copy of $\Lambda_0$ that appears after this handle-slide crosses the belt sphere of $H^{n-1}$ in the opposite direction as the first copy. See the third and fourth diagrams in Figure \ref{fig: smoothisotopy} where $H^{n-1}$ is depicted as two circles as in the 1-dimensional case. Note that in the fourth diagram, the two branches of $h_{\Lambda_0}^2(\Lambda_i)$ enter the $n-1$ handle with opposite orientations, denoted by the arrows. In particular, $h_{\Lambda_0}^2(\Lambda_i)$ has algebraic intersection number zero with the belt sphere of $H^{n-1}$. 
		As in Theorem \ref{thm: flexdomainhandle},
		$h_{\Lambda_0}^2(\Lambda_1) \coprod \cdots \coprod h_{\Lambda_0}^2(\Lambda_k)$ form a loose link; 
		more precisely,  the $i$th Legendrian is loose in the complement of the previous $(i-1)$ Legendrians, which implies that the link is loose.   
		Hence $W' := W_{sub}\cup H^{n-1} \cup 
		H^n_{h^2_{\Lambda_0}(\Lambda_1)}  \cup \cdots \cup
		H^n_{h^2_{\Lambda_0}(\Lambda_k)}$  is flexible and 
		$W = W' \cup H^n_{\Lambda_0}$. 
		
		Since the algebraic intersection number of $h^2_{\Lambda_0}(\Lambda_i)$ with the belt sphere of $H^{n-1}$ is zero, $n \ge 3$, and $\pi_1(\partial(B^{2n} \cup H^{n-1})) = 0$, we can use the Whitney trick to smoothly isotope $h^2_{\Lambda_0}(\Lambda_i)$ away from this  belt sphere.
		In fact, we can assume that this smooth isotopy is supported in $\partial(B^{2n} \cup H^{n-1})$. To see this, note that we can take the boundary of the Whitney disk to lie in this region; see the green portion of Legendrian in the fourth diagram of Figure \ref{fig: smoothisotopy}. This region is simply-connected and hence the Whitney disk also lies in this 
		region; so the isotopy is also supported in this region. 
		Since $n \ge 3$,  the Whitney disks will be generically disjoint for different $i$ and so we can  smoothly isotope the whole link $h^2_{\Lambda_0}(\Lambda_1) \coprod \cdots \coprod h^2_{\Lambda_0}(\Lambda_k)$ off the belt sphere of $H^{n-1}$ (again via an isotopy supported in $\partial( B^{2n} \cup H^{n-1})$).
		
		The Legendrian link $h^2_{\Lambda_0}(\Lambda_1) \coprod \cdots \coprod h^2_{\Lambda_0}(\Lambda_k)$ is loose and so
		the smooth isotopy can be approximated by a Legendrian isotopy. Since the smooth isotopy is supported in $\partial( B^{2n} \cup H^{n-1})$ and the Legendrians are loose in this region, the Legendrian isotopy is also supported in this region. 
		Let $\phi_t$ be the ambient contact isotopy inducing this Legendrian isotopy and supported in a small neighborhood of the Legendrian isotopy; in particular $\phi_t$ is also supported in $\partial( B^{2n} \cup H^{n-1})$. 
		Since $h_{\Lambda_0}^2(\Lambda_1) \coprod \cdots \coprod h_{\Lambda_0}^2(\Lambda_k)$ is a loose link, so is
		$\phi(h_{\Lambda_0}^2(\Lambda_1)) \coprod \cdots \coprod \phi(h_{\Lambda_0}^2(\Lambda_k))$, where $\phi: = \phi_1$. 
		Furthermore, we can assume that this link is loose in the complement of $H^{n-1}$ and $\Lambda_0$ but not in the complement of $\phi(\Lambda_0)$. See the fifth diagram in Figure \ref{fig: smoothisotopy}. The upper Legendrian in black  is $\phi(h_{\Lambda_0}^2(\Lambda_i))$ and the blue box is its loose chart. The red Legendrian is $\phi(\Lambda_0)$. 
		This fifth diagram is purely schematic and is meant to demonstrate that $\phi(\Lambda_0)$ intersects the belt sphere of $H^{n-1}$ some number of times and is linked with $\phi(h^2_{\Lambda_0}(\Lambda_1))$ in some way such that 
		$\phi(\Lambda_0)$ intersects the loose chart of $\phi(h^2_{\Lambda_0}(\Lambda_i))$ (since $\Lambda_0$ intersected the loose chart of $h^2_{\Lambda_0}(\Lambda_i)$).

		Now we apply the contact isotopy $\phi$ to all attaching Legendrians; see the transition from the fourth to the fifth diagram in Figure \ref{fig: smoothisotopy}. 
		As a result, we get that 
		$W = W_{sub}\cup H^{n-1} \cup 
		H^n_{h^2_{\Lambda_0}(\Lambda_1)}  \cup \cdots \cup
		H^n_{h^2_{\Lambda_0}(\Lambda_k)} 
		\cup 
		H^n_{\Lambda_0}$ is Weinstein homotopic to 
		$W_{sub} \cup H^{n-1}  \cup 
		H^n_{\phi(h^2_{\Lambda_0}(\Lambda_1))}  \cup \cdots \cup
		H^n_{\phi(h^2_{\Lambda_0}(\Lambda_k))} 
		\cup H^n_{\phi(\Lambda_0)}$.
		The key point is that the latter presentation is Weinstein homotopic to 
		$W_{sub} \cup 
		H^n_{\phi(h^2_{\Lambda_0}(\Lambda_1))}  \cup \cdots \cup
		H^n_{\phi(h^2_{\Lambda_0}(\Lambda_k))} 
		\cup H^{n-1} \cup H^n_{\phi(\Lambda_0)}$ because we can attach the handles $H^n_{\phi(h^2_{\Lambda_0}(\Lambda_1))}  \cup \cdots \cup H^n_{\phi(h^2_{\Lambda_0}(\Lambda_k))}$ before $H^{n-1}$
		since $\phi(h^2_{\Lambda_0}(\Lambda_1))  \coprod \cdots \coprod \phi(h^2_{\Lambda_0}(\Lambda_k))$ is disjoint from the belt sphere of $H^{n-1}$.  
		Let $W''$ be the domain $W_{sub} \cup H^{n}_{\phi(h_{\Lambda_0}^2(\Lambda_1))}
		\cup \cdots \cup H^{n}_{\phi(h_{\Lambda_0}^2(\Lambda_k))}$ obtained by viewing $\phi(h^2_{\Lambda_0}(\Lambda_1))  \coprod \cdots \coprod \phi(h^2_{\Lambda_0}(\Lambda_k))$ as a Legendrian link in $\partial W_{sub}$. So $W$ is Weinstein homotopic to 
		$W'' \cup H^{n-1} \cup H^n_{\phi(\Lambda_0)}$. 
		We note that $W''$ is flexible since $\phi(h_{\Lambda_0}^2(\Lambda_1)) \coprod \cdots \coprod \phi(h_{\Lambda_0}^2(\Lambda_k))$ is loose in the complement of $H^{n-1}$. 
		
		Finally, we show that the Weinstein cobordism $W\backslash W'' =  H^{n-1}\cup H^n_{\phi(\Lambda_0)}$ is smoothly trivial. 
		Since $\phi$ is smoothly isotopic to the identity, $\phi(\Lambda_0)$ is smoothly isotopic to $\Lambda_0$ in $\partial (W_{sub} \cup H^{n-1})$.
		Since $\Lambda_0$ intersects the belt sphere of $H^{n-1}$ exactly once, this isotopy gives Whitney disks that cancel out all intersection points between $\phi(\Lambda_0)$ and the belt sphere of $H^{n-1}$ (except for one). Since $n \ge 3$, the Whitney disks will be generically disjoint from the link
		$\phi(h_{\Lambda_0}^2(\Lambda_1)) \coprod \cdots \coprod \phi(h_{\Lambda_0}^2(\Lambda_k))$. So  $\phi(\Lambda_0)$ can be smoothly isotoped in the complement of this link to a sphere that intersects the belt sphere of $H^{n-1}$  exactly once. This means that
		$\phi(\Lambda_0)$ can be smoothly isotoped in $\partial (W'' \cup H^{n-1})$ 
		to intersect this belt sphere  exactly once,  which proves that $W\backslash W'' = H^{n-1} \cup H^n_{\phi(\Lambda_0)}$ is smoothly trivial.

		Any almost symplectic structure on a smoothly trivial cobordism  can be deformed relative to the negative end to the product almost symplectic structure. In particular, $W, W''$ are almost symplectomorphic. Since $W''$ is flexible, by the uniqueness h-principle \cite{CE12} it is the flexibilization $W_{flex}$ of $W$. 
	\end{proof}
	Now we prove the 4-dimensional analog of Theorem \ref{thm: flexible_subdomain}. 
	\begin{proof}[Proof of Theorem \ref{thm: dim4_sh}]
		We take $V^4$ to be $W'$ from the proof of Theorem \ref{thm: flexible_subdomain} so that $W = V \cup H^2_{\Lambda_0}$. 
		Note that $V^4$ is obtained  by attaching a 1-handle and some 2-handles along $h^2_{\Lambda_0}(\Lambda_k)$  to $W_{sub}^4$. Each attaching knot for the 2-handles is stabilized in the complement of the previous ones; hence $V^4$ is a stabilized domain. 
		Finally, we note that $V^4$ is simply homotopy equivalent to $W^4 \cup H^1$. To see this, we consider the six-dimensional domain $V^4 \times B^2$; as can be seen explicitly, the attaching knots $h_{\Lambda_0}^2(\Lambda_k)$ are unknotted in the $B^6 \cup H^1$ region and hence can be smoothly isotoped to $\Lambda_k$. As a result,  this domain is diffeomorphic to $(W^4 \cup H^1) \times B^2$.  Here we do not use the Whitney trick directly since the region $B^6 \cup H^1$ is not simply-connected. 
	\end{proof}
	Using Theorem \ref{thm: flexible_subdomain}, we can prove Theorem \ref{thm: fewcrit}, our result relating $WCrit$ and $Crit$.  
	\begin{proof}[Proof of Theorem \ref{thm: fewcrit}]
		By Theorem \ref{thm: flexible_subdomain}, we can Weinstein homotope any Weinstein domain $W^{2n}, n \ge 3,$ to its flexiblization plus two smoothly cancelling handles of index $n-1, n$, i.e. to $W_{flex} \cup H^{n-1}\cup H^n_{\Lambda_1}$ where $\Lambda_1$ can be smoothly isotoped to intersect the belt sphere of $H^{n-1}$ exactly once. 
		For any smooth Morse function $f$ with critical points of index at most $n$ on $W$, there is a Weinstein homotopy of $W_{flex}$ to a Weinstein presentation with Weinstein Morse function $f$; see Theorem 14.1 of \cite{CE12}. Furthermore, if $f$ has $\partial W_{flex}$ as a regular level set, then this Weinstein homotopy is fixed on $\partial W_{flex}$ up to scaling. By Smale's handle-trading trick, 
		there exists such a smooth function on $W$ that minimizes the number of critical points, i.e. with $Crit(W)$ critical points, and so we can Weinstein homotope $W_{flex}$ to a Weinstein presentation with $Crit(W)$ critical points. Since this homotopy is fixed up to scaling on $\partial W_{flex}$, it extends to a Weinstein homotopy of $W_{flex} \cup H^{n-1} \cup H^n_{\Lambda_1}$, which is fixed up to scaling in $W\backslash W_{flex}$. In particular, this homotopy on $W_{flex} \cup H^{n-1} \cup H^n_{\Lambda_1}$ does not alter the number of critical points in $W\backslash W_{flex}$.   Combining the homotopy of $W$ to $W_{flex} \cup H^{n-1} \cup H^n_{\Lambda_1}$ and this second homotopy of $W_{flex} \cup H^{n-1} \cup H^n_{\Lambda_1}$ to a presentation with few critical points, we get a Weinstein homotopy of $W$ to a Weinstein presentation with $Crit(W) + 2$ critical points: $Crit(W)$ critical points in $W_{flex}$ and 2 critical points in $W\backslash W_{flex}$ due to the handles $H^{n-1}, H^{n}_{\Lambda_1}$. This proves the first claim in Theorem \ref{thm: fewcrit}.

		Now we prove the third claim in Theorem \ref{thm: fewcrit} about smoothly subcritical domains $W^{2n}$. If $W^{2n}$ is Weinstein subcritical, then $W^{2n}$ is flexible and so by the above discussion can be homotoped to a Weinstein presentation with $Crit(W)$ critical points, i.e. $WCrit(W) = Crit(W)$. 
		Conversely, suppose that $WCrit(W) = Crit(W)$ and $\pi_1(W) = 0$. 
		If $\pi_1(W) = 0$, the proof of Smale's h-cobordism theorem shows that $Crit(W)$ equals the number of generators and relations for integral homology; see Theorem 6.1 of \cite{smale_structure_manifolds}. Then any minimizing smooth Morse function on $W$ cannot have any critical points of index greater than $n-1$ since these critical points are algebraically unnecessarily; we can remove them and still have generators for integral homology since $H_n(W; \mathbb{Z}) = 0$ and $H_{n-1}(W; \mathbb{Z})$ is torsion-free for smoothly subcritical $W$. 
		Hence if $\pi_1(W) = 0$ and $WCrit(W) = Crit(W)$, then the minimal Weinstein presentation gives a minimal smooth presentation and so cannot have any critical points of index greater than $n-1$. Therefore $W$ is Weinstein subcritical. Finally, we note that if $WCrit(W^{2n}) \ne Crit(W^{2n})$, then $WCrit(W^{2n}) = Crit(W^{2n})+2$ since $WCrit(W^{2n}) \le Crit(W^{2n})+2$ by the first claim and  $WCrit(W^{2n}) \equiv Crit(W^{2n})+2 \mod 2$ by the Euler characteristic.
		
		Now we prove the smoothly critical case. Suppose that $\psi$ is a minimal smooth Morse function on $W$ with $k = Crit(W)$ critical points. By assumption, one of these critical points has index $n$ (and the rest of the critical points have index at most $n$). 
		By the previous discussion, we can assume that $\psi$ is a Weinstein Morse function on $W_{flex}$ and two other smoothly cancelling handles $H^{n-1}, H^n_{\Lambda_1}$ are attached to $W_{flex}$  to form $W$. The smooth isotopy from $\Lambda_1$ to cancelling position gives some number of Whitney disks in $\partial(W_{flex} \cup H^{n-1})$ pairing off all intersection points of $\Lambda_1$ and the belt sphere of $H^{n-1}$ (except for one intersection point). 
		
		We can suppose that the index $n$ critical point of $\psi$ on $W_{flex}$ is attached along a loose Legendrian $\Lambda_0$; so $W_{flex} = W_{flex}' \cup H^{n}_{\Lambda_0}$ and $W = W_{flex}' \cup H^{n-1}\cup H^n_{\Lambda_0} \cup H^n_{\Lambda_1}$. Note that $\Lambda_0$ is disjoint from the belt sphere of $H^{n-1}$ (since $H^{n-1}$ is attached after $H^n_{\Lambda_0}$). We view $\Lambda_1 \subset \partial (W_{flex}' \cup H^{n-1})$ by taking any Legendrian in $\partial (W_{flex}' \cup H^{n-1})$ that is isotopic to $\Lambda_1$ in $\partial (W_{flex}' \cup H^{n-1} \cup H^n_{\Lambda_0})$; in general, there will be many such Legendrians, which are non-isotopic in $\partial (W_{flex}' \cup H^{n-1})$. Since $n \ge 3$, we can assume that the Whitney disks of $\Lambda_1$ in 
		$\partial(W_{flex} \cup H^{n-1})$ are disjoint from the belt sphere of $H^n_{\Lambda_0}$ and hence lie in $\partial(W_{flex}' \cup H^{n-1})$. In particular, $\Lambda_1$ can be smoothly isotoped in $\partial(W_{flex}' \cup H^{n-1})$ to intersect the belt sphere of $H^{n-1}$ in a single point. Furthermore, since the Whitney disks are disjoint from $\Lambda_0$ (since they are disjoint from its belt sphere), we can assume that this isotopy is supported away from $\Lambda_0$.  We  can also assume that this smooth isotopy of $\Lambda_1$ is the identity in a neighborhood of some point $x$ in $\Lambda_1$. We take an isotropic path $\gamma$ from $x$ to $\Lambda_0$ and also assume that the isotopy is the identity in a neighborhood of this path.

		Now we handle-slide $\Lambda_1$ over $\Lambda_0$ using the path $\gamma$. More precisely, we take the Legendrian connected sum of $\Lambda_1$ with  a Legendrian unknot near $\Lambda_0$ via the isotropic arc $\gamma$ and then handle-slide using a chart near this Legendrian unknot as in Theorem \ref{thm: flexdomainhandle}. We also do the handleslide so that the resulting Legendrian $h_{\Lambda_0}(\Lambda_1)$ is loose in $\partial (W_{flex}' \cup H^{n-1})$ (but not in the complement of $\Lambda_0$). Now we note that $h_{\Lambda_0}(\Lambda_1)$ can also be smoothly isotoped in 
		$\partial (W_{flex}' \cup H^{n-1})$ to a cancelling sphere that intersects the belt sphere of $H^{n-1}$ once. Namely, we can use exactly the same smooth isotopy that takes $\Lambda_1$ to a cancelling sphere. This is because $h_{\Lambda_0}(\Lambda_1)$ is topologically the connected sum of $\Lambda_0$ and $\Lambda_1$. Since the previous isotopy is supported away from $\Lambda_0$ and the path $\gamma$ used for the connected sum, we can extend it to the connected sum. Furthermore, $\Lambda_0$ is disjoint from the belt sphere of $H^{n-1}$ and so after the smooth isotopy,  $h_{\Lambda_0}(\Lambda_1)$ intersects this belt sphere once. 
		
		Since $h_{\Lambda_0}(\Lambda_1)$ is loose in $\partial (W_{flex}' \cup H^{n-1})$ and smoothly cancels $H^{n-1}$, we can symplectically cancel $H^{n-1}$ and $H^{n}_{h_{\Lambda_0}(\Lambda_1)}$. 
		Therefore $W_{flex}' \cup H^{n-1} \cup H^n_{\Lambda_0} \cup H^n_{h_{\Lambda_0}(\Lambda_1)}$ is Weinstein homotopic to $W_{flex}' \cup H^n_{\Lambda_0'}$. Here $\Lambda_0'$ is the Legendrian obtained by handle-sliding $\Lambda_0$ off the cancelling pair $H^{n-1} \cup H^n_{h_{\Lambda_0}(\Lambda_1)}$, i.e. $\Lambda_0'$ is the image of $\Lambda_0$ in $W_{flex}' = W_{flex}' \cup H^{n-1} \cup H^n_{h_{\Lambda_0}(\Lambda_1)}$. Since $W_{flex}'$ has a Weinstein presentation with $k-1$ critical points, $W_{flex}' \cup H^{n}_{\Lambda_0'}$ has a presentation with $k = Crit(W)$ critical points. This completes the proof since 
		$W = W_{flex}' \cup H^{n-1} \cup H^n_{\Lambda_0} \cup H^n_{\Lambda_1}$ is Weinstein homotopic to $W_{flex}' \cup H^{n-1} \cup H^n_{\Lambda_0} \cup H^n_{h_{\Lambda_0}(\Lambda_1)}$, which is homotopic to  $W_{flex}' \cup H^n_{\Lambda_0'}$. 
	\end{proof}

	The proof of Theorem \ref{thm: fewcrit} can be used to prove Corollary \ref{cor: non_loose_link}: all Legendrians in our Legendrian link can be made individually loose. 
	\begin{proof}[Proof of Corollary \ref{cor: non_loose_link}]
		The proof of Theorem \ref{thm: fewcrit} in the smoothly critical case shows that
		$W = W_{flex}' \cup H^{n-1} \cup H^n_{\Lambda_0} 
		\cup H^n_{ h_{\Lambda_0}(\Lambda_1)}$ where $\Lambda_0, h_{\Lambda_0}(\Lambda_1)$ are both loose; 
		$\Lambda_0$ is loose by assumption and $h_{\Lambda_0}(\Lambda_1)$ is loose because of the handle-slide. Combining $\Lambda_0$ with the attaching spheres of the n-handles of $W_{flex}' \cup H^{n-1}$
		(which form a loose link for some presentation), we get the desired result. For general $W$, we first add a pair of symplectically cancelling handles to $W_{flex}$ and then proceed as in the smoothly critical case. 
	\end{proof}
	
	Next we prove Theorem \ref{thm: intersection points} about the number of intersection points between the belt and attaching spheres of smoothly cancelling handles. 
	
	\begin{proof}[Proof of Theorem \ref{thm: intersection points}]
		By Theorem \ref{thm: flexdomainhandle}, we can assume that the  smoothly trivial Weinstein cobordism $W$ consists of two smoothly cancelling handles $H^{n-1}_1, H^n_{\Lambda_1}$, i.e. $\Lambda_1$ is smoothly isotopic to a Legendrian that intersects the belt sphere of $H^{n-1}_1$ in a single point.
		Now we follow the proof of Theorem \ref{thm: flexible_subdomain}. We first attach two cancelling handles $H^{n-1}_0, H^n_{\Lambda_0}$ in a small Darboux ball and do two handle-slides (of opposite orientations) of $\Lambda_1$ over $\Lambda_0$ so that the resulting Legendrian $h^2_{\Lambda_0}(\Lambda_1)$  is loose. 
		Then we use the contact isotopy $\phi$ to isotope  $h^2_{\Lambda_0}(\Lambda_1)$ away from the belt sphere of $H^{n-1}_{0}$.  
		The result is $W = H^{n-1}_0 \cup H^{n-1}_1 \cup H^n_{\phi(h^2_{\Lambda_0}(\Lambda_1))} \cup 
		H^n_{\phi(\Lambda_0)}$; see the fifth diagram in Figure \ref{fig: smoothisotopy}. The key observation is that this local diagram is independent of $\Lambda_1$ since all isotopies were done near $H^{n-1}_0\cup H^n_{\Lambda_0}$. In particular, let $C_n$ be the number of times that  $\phi(\Lambda_0)$ intersects the belt sphere of $H^{n-1}_0$; in Figure \ref{fig: smoothisotopy}, this number is $5$ but since we do not compute this isotopy $\phi$ explicitly we do not know the exact number. 
		
		Next we note that Legendrian $\phi(h_{\Lambda_0}^2(\Lambda_1))$ is still smoothly isotopic to a Legendrian that intersects the belt sphere of $H^{n-1}_1$ in a single point. This is because  $\phi(h_{\Lambda_0}^2(\Lambda_1))$ is exactly the same as 
		$\Lambda_1$ except for a loose chart; see the blue box in the fifth diagram of Figure \ref{fig: smoothisotopy}. 
		Furthermore, we can assume that this smooth isotopy is supported away from $H^{n-1} \cup H^n_{\Lambda_0}$. Since $\phi(h_{\Lambda_0}^2(\Lambda_1))$ is loose, there is a contact isotopy $\psi$ taking it to a Legendrian that intersects the belt sphere of $H^n_1$ in one point; since
		$\phi(h_{\Lambda_0}^2(\Lambda_1))$ is loose away from $H^{n-1} \cup H^n_{\Lambda_0}$ and the smooth isotopy is supported away from this region, we can assume that this contact isotopy is also supported away from $H^{n-1}_0 \cup H^n_{\Lambda_0}$. In particular, $\psi(\phi( \Lambda_0))$ still intersects the belt sphere of $H^{n-1}_0$ in $C_n$ points. Finally, we handle-slide $\psi(\phi( \Lambda_0))$ over $\psi(\phi(h_{\Lambda_0}^2(\Lambda_1))$ and off $H^{n-1}_1$. This also does not change its geometric intersection number with  the belt sphere of $H^{n-1}_0$ since $\psi(\phi(h_{\Lambda_0}^2(\Lambda_1))$ is disjoint from this belt sphere. 
		We call the resulting Legendrian $\Lambda_0'$. Then $W = H^{n-1}_0 \cup H^n_{\Lambda_0'}$ and $\Lambda_0'$ intersects the belt sphere of $H^{n-1}_0$ exactly $C_n$ times as desired. 
		The Legendrian $\Lambda_0'$ is depicted in the sixth diagram of Figure \ref{fig: smoothisotopy}.  This diagram is also schematic and is mean to signify that $\Lambda_0'$ has an upper and lower part; the lower part of $\Lambda_0'$ is close to $H^{n-1}_0$ and is independent of $\Lambda_1$ while the upper part of $\Lambda_0'$ depends on $\Lambda_1$ (and hence on $W$). 
	\end{proof}
	
	Next we prove Corollary \ref{cor: no_aug} about the Chekanov-Eliashberg DGA's of certain Legendrians. We show that the vanishing of the Grothendieck group of any Weinstein ball, Corollary \ref{cor: Grothendieck_group}, 	 implies that the Chekanov-Eliashberg algebra of a Legendrian sphere $\Lambda^{n-1} \subset (S^{n-1}\times S^n, \xi_{std})$ that is primitive in homology has no finite-dimensional representations. 		
	\begin{proof}[Proof of Corollary \ref{cor: no_aug}]
		Let $X^{2n}:= B^{2n}_{std}\cup H^{n-1} \cup H^n_{\Lambda}$. Since $[\Lambda] =1 \in H_{n-1}(S^{n-1}\times S^n; \mathbb{Z}) \cong \mathbb{Z}$, $H^n(X^{2n}; \mathbb{Z}) = 0$ and so
		$K_0(\mathcal{W}(X)) = 0$ by Corollary \ref{cor: Grothendieck_group}. Let $C^n \subset X^{2n}$ be the co-core of $H^n_{\Lambda}$. 
		Since $C^n$ is the only index $n$ co-core for $X^{2n}$, $C^{n}$ generates $\mathcal{W}(X)$ and so 
		$D^b\mathcal{W}(X):= H^0(Tw(Fuk(X)))$ is equivalent to 
		$H^0(Tw(CW(C,C)))$, where we treat $CW(C,C)$ is an $A_\infty$-category with one object.
		By \cite{BEE12}, $CW(C,C)$ is quasi-isomorphic to $CE(\Lambda)$ and hence $D^b\mathcal{W}(X)$ is exact equivalent to $H^0(Tw(CE(\Lambda) ))$. 
		
		Suppose that $CE(\Lambda)$ has a DGA map to $\mbox{Mat}(n, \mathbb{K})$.
		Then there is an $A_\infty$-functor 
		$Tw(CE(\Lambda)) \rightarrow Tw(\mbox{Mat}(n, \mathbb{K}))$ and an exact functor
		$H^0(Tw(CE(\Lambda)) \rightarrow H^0(Tw(\mbox{Mat}(n, \mathbb{K})))$ taking 
		$CE(\Lambda)$ to $\mbox{Mat}(n, \mathbb{K})$ (considered as twisted complexes). 
		Let $D(Mat(n,\mathbb{K}))$ denote the classical derived category of $Mat(n, \mathbb{K})$-modules and 
		$D_\infty(Mat(n,\mathbb{K}))$ its $A_\infty$ analog, i.e. the homotopy category of $A_\infty$-modules over $Mat(n,\mathbb{K})$. 			 
		There is an embedding $D(Mat(n,\mathbb{K})) \rightarrow D_\infty(Mat(n,\mathbb{K}))$; see \cite{keller_infinity}. Since $H^0(Tw(\mbox{Mat}(n, \mathbb{K})))$ is equivalent to  the subcategory of  $D_\infty(Mat(n,\mathbb{K}))$ generated by the free module $Mat(n,\mathbb{K})$ and since the exact subcategory
		$D Mat(n,\mathbb{K})$ contains this free module, 		 $H^0(Tw(\mbox{Mat}(n, \mathbb{K})))$ is also equivalent to the subcategory of 
		$D Mat(n,\mathbb{K})$ generated by the free module $Mat(n,\mathbb{K})$. This subcategory is an exact subcategory of
		$D^b Proj(Mat(n,\mathbb{K}))$, the bounded derived category  of 
		projective 
		$Mat(n,\mathbb{K})$-modules. In summary, there is an exact functor
		$D^b\mathcal{W}(X) \rightarrow D^b Proj(Mat(n,\mathbb{K}))$ taking 
		the co-core $C^n$  to the free module $Mat(n,\mathbb{K})$. 
		This functor induces a map of Grothendieck groups 
		$K_0(\mathcal{W}(X) )\rightarrow 
		K_0(D^b(Proj(Mat(n,\mathbb{K})))$, and the latter is just the usual Grothendieck group
		$K_0(Mat(n,\mathbb{K}))$ of projective $Mat(n,\mathbb{K})$-modules.
		It is well-known that $[\mbox{Mat}(n, \mathbb{K})] \in K_0(\mbox{Mat}(n, \mathbb{K})) \cong \mathbb{Z}$ is non-zero. Therefore $K_0(\mathcal{W}(X) )$ is also non-zero, which contradicts Corollary \ref{cor: Grothendieck_group}.
		Similarly, there are no DGA maps from $CE(\Lambda)$ to a commutative ring $R$ since $[R] \in K_0(R)$ is non-zero for commutative rings. 
	\end{proof}

	Now we prove Corollary \ref{cor: c0-close} concerning Legendrians that can be isotoped into neighborhoods of loose Legendrians. 
	
	\begin{proof}\ref{cor: c0-close}
		Consider a loose Legendrian sphere $A \subset (S^{n-1} \times S^n, \xi_{std})$ that is primitive in $H_n(S^{n-1} \times S^n; \mathbb{Z})$.  Let $B
		\subset (S^{n-1} \times S^n, \xi_{std})$ 
		be the stabilization of $A$, followed by a small Reeb push-off so that $A,B$ are disjoint and form a loose link. The stabilization is done so that $A,B$ are formally isotopic (and hence Legendrian isotopic). We can also assume that exist disjoint contact neighborhoods $U, V$ of $A,B$ respectively so that $A,B$ are loose in the complement of $V,U$ respectively. 	
		
		Since $A$ is loose, 
		$B^{2n}_{std} \cup H^{n-1} \cup H^n_{A}$ is Weinstein homotopic to $B^{2n}_{std}$. By attaching the handle $H^{n}_{A}$ using a neighborhood of $A$ contained in $U$,  we can assume that $B$ and its neighborhood $V$ are disjoint from the attaching neighborhood and hence extend to a Legendrian $B' \subset (S^{2n-1}, \xi_{std}) = \partial B^{2n}_{std}$ and a contact neighborhood $V'$ of $B'$. 
		Since $B$ is loose in the complement of $U$, its loose chart extends to $(S^{2n-1}, \xi_{std})$ and so $B'$ is loose. The belt sphere of $H^n_{A}$ is the standard Legendrian unknot and so  $B'$ is formally isotopic to the Legendrian unknot. Since $B'$ is loose,  it is the loose Legendrian unknot $\Lambda_{unknot, loose}$. 
		
		As in the statement of this result, consider a Legendrian sphere $\Lambda \subset (S^{2n-1}, \xi_{std})$ that can isotoped into a neighborhood of $\Lambda_{unknot, loose} = B'$
		and is primitive in $H_{n-1}(\Lambda_{unknot,loose}; \mathbb{Z})$; we can assume that this neighborhood is $V'$. Using the identification between $V' \subset (S^{2n-1}, \xi_{std})$ and 	
		$V \subset (S^{n-1} \times S^n, \xi_{std})$, $\Lambda \subset V'$ defines a Legendrian $\Lambda_0 \subset V \subset  (S^{n-1} \times S^n, \xi_{std})$. In particular, $\Lambda \subset (S^{2n-1}, \xi_{std})$ is obtained by trivially extending $\Lambda_0 \subset  (S^{n-1} \times S^n, \xi_{std})$ through the Weinstein cobordism from $B^{2n}_{std} \cup H^{n-1}$ to $B^{2n}_{std} = B^{2n}_{std} \cup H^{n-1} \cup H^n_{A}$ given by handle attachement along $A \subset (S^{n-1} \times S^n, \xi_{std})$. 
		Since $\Lambda_0 \subset V$, $A \subset  (S^{n-1} \times S^n, \xi_{std})$ is loose in the complement of $\Lambda_0$. Handle attachment along the loose Legendrian $A$ does not change the Chekanov-Eliashberg algebras of Legendrians, like $\Lambda_0$, that are disjoint from the loose chart of $A$; see \cite{BEE12, Lazarev_flexible_fillings}. Hence $CE(\Lambda_0), CE(\Lambda)$ are quasi-isomorphic; this is the key point where we use the fact that $\Lambda$ is in a neighborhood of $\Lambda_{unknot, loose}=B'$, which implies that $\Lambda_0$ is disjoint from the loose chart of $A$.  Without this condition, $CE(\Lambda_0), CE(\Lambda)$ could be completely different and in fact, $CE(\Lambda_0)$ could be zero while $CE(\Lambda)$ is arbitrary.

		The fact that $\Lambda$ is primitive in $H_{n-1}(\Lambda_{unknot, loose}; \mathbb{Z})$ implies that 
		$\Lambda_0 \subset  (S^{n-1} \times S^n, \xi_{std})$ is primitive in $H_{n-1}(B; \mathbb{Z})$ and hence primitive in $H_{n-1}(S^{n-1} \times S^n; \mathbb{Z})$. 	So
		$H^0(Tw (CE(\Lambda_0))$ is equivalent to $D^b \mathcal{W}(X)$, where $X^{2n}$ is the Weinstein ball  $B^{2n}_{std}\cup H^{n-1} \cup H^n_{\Lambda_0}$. Then as in Corollary \ref{cor: no_aug}, $CE(\Lambda_0)$ has no finite-dimensional representations or DGA maps to commutative rings. 		
		Since $CE(\Lambda)$ is quasi-isomorphic to $CE(\Lambda_0)$ by the previous paragraph, $CE(\Lambda)$ also has no finite-dimensional representations or DGA maps to a commutative ring. More precisely, this quasi-isomorphism implies that $H^0(Tw (CE(\Lambda)))$ and $H^0(Tw (CE(\Lambda_0))$ are equivalent and the rest of the proof is as in Corollary \ref{cor: c0-close}.		
	\end{proof}
	
	Combining Corollary \ref{cor: no_aug} with the existence of infinitely many exotic Weinstein balls, we conclude that there are infinitely many Legendrian spheres in $(S^{n-1} \times S^n, \xi_{std})$ or $(S^{2n-1}, \xi_{std})$ with no finite-dimensional representations; these Legendrians are also in a contact neighborhood of loose Legendrians and are primitive in their homology. 
	\begin{proof}[Proof of Corollary \ref{cor: exotic_leg_no_aug}]			
		
		McLean \cite{MM} showed that there are infinitely many exotic Weinstein balls $\Sigma^{2n}_{k}$ for each $n \ge 4$, distinguished by symplectic cohomology. 
		As explained in Example \ref{ex: ball}, 
		$WCrit(\Sigma_k^{2n}) = 3$ and so $\Sigma_k^{2n}$ can be presented as  $B^{2n}_{std}\cup H^{n-1} \cup H^n_{\Lambda_k}$ for some Legendrian $\Lambda_k \subset (S^{n-1}\times S^n, \xi_{std})$. Since $\Sigma^{2n}_k$ is a ball, $\Lambda_k$ is primitive in homology and so by Corollary \ref{cor: no_aug}, $CE(\Lambda_k)$ has 
		no finite-dimensional representations. 
		By \cite{BEE12}, the symplectic cohomology of $\Sigma_k^{2n}$ is isomorphic to the Hochschild homology of $CE(\Lambda_k)$ and hence $CE(\Lambda_k)$ are not acyclic and are different for different $k$, as desired. 
		
		Next we show that the Legendrians $\Lambda_k$ can be isotoped into a contact neighborhood of a loose Legendrian and are primitive in its homology class. To do so,  we observe that in fact any closed connected Legendrian $\Lambda^{n-1} \subset (S^{n-1} \times S^n, \xi_{std} )$ can be Legendrian isotoped into a neighborhood of the loose Legendrian $S^{n-1} \times \{p\}$ for any $p \in S^n$ (but is not necessarily primitive in its homology).
		 Since $\Lambda_k$ and $S^{n-1} \times \{p\}$ are both primitive in $H_{n-1}(S^{n-1} \times S^n; \mathbb{Z})$, $\Lambda_k$ is primitive in $H_{n-1}(S^{n-1}\times \{p\}; \mathbb{Z})$, as desired. To see that any Legendrian can be isotoped in a neighborhood of  $S^{n-1} \times \{p\}$, 		
		note that $(S^{n-1} \times S^n, \xi_{std}) = \partial(D^*S^{n-1} \times D^2)$, where $D^*S^{n-1}$ is the unit disk cotangent bundle of $S^{n-1}$. Let $\pi: D^*S^{n-1} \times D^2 \rightarrow D^2$ be the projection map. 	 
		Since $\Lambda^{n-1}$ has dimension $n-1$ and $T^*S^{n-1}$ can be retracted to an $n-1$-dimensional space $S^{n-1}$ and the ambient space $S^{n-1} \times S^n$ has dimension $2n-1$, by Thom's transversality theorem, there is a Legendrian isotopy of $\Lambda$ making it disjoint from $T^*S^{n-1} \times (1,0) = \pi^{-1}(1,0) \subset S^{n-1} \times S^n$; see \cite{eliashberg_mishachev}. Since $\Lambda^{n-1}$ is closed and $\pi$ is a closed map,  $\Lambda$ is actually disjoint from $\pi^{-1}(Op(1,0)) \cap (S^{n-1} \times S^n)$ and hence contained in $(S^{n-1} \times S^n)\backslash 	\pi^{-1}(Op(1,0))$. 
	There is a contact isotopy taking $(S^{n-1} \times S^n) \backslash\pi^{-1}(Op(1,0))$ to $\pi^{-1}(Op(-1,0) ) \cap S^{n-1} \times S^n$, which is a contact neighborhood of the loose Legendrian $S^{n-1}\times (-1,0) \subset T^*S^{n-1} \times D^2$. So there is a Legendrian isotopy of $\Lambda$ into a neighborhood of this Legendrian. Note that $S^{n-1}\times (-1,0)$ is of the form $S^{n-1} \times \{p\}$, for the appropriate $p \in S^{n}$, which proves the claim.

		For the second part of this corollary about Legendrians in $(S^{2n-1}, \xi_{std})$, we essentially reverse the procedure in the proof of Corollary \ref{cor: c0-close}. Take a loose Legendrian $A \subset  (S^{n-1} \times S^n, \xi_{std})$ disjoint from $\Lambda_k$ and loose in the complement of $\Lambda_k$.  
		Then $B^{2n}_{std}\cup H^{n-1} \cup H^n_{A}$ is flexible and hence Weinstein homotopic to $B^{2n}_{std}$. Since $\Lambda_k$ is disjoint from $A$, $\Lambda_k$ defines a Legendrian sphere $\Lambda'_k$ in $(S^{2n-1}, \xi_{std}) = \partial B^{2n}_{std}$. Since $A$ is loose in the complement of $\Lambda_k$, $CE(\Lambda'_k)$ is quasi-isomorphic to 
		$CE(\Lambda_k)$ by \cite{BEE12, Lazarev_flexible_fillings}, as discussed in the proof of Corollary \ref{cor: c0-close}. Therefore, 	$H^0(Tw (CE(\Lambda_k)))$ is equivalent to 
		$H^0(Tw (CE(\Lambda_k')))$ and so $\Lambda_k' \subset (S^{2n-1}, \xi_{std})$ has the same properties as $\Lambda_k\subset (S^{n-1} \times S^n, \xi_{std})$, i.e. $CE(\Lambda_k')$ has no finite-dimensional representations or DGA maps to a commutative ring and their Hochschild homology are different for different $k$. 			
		Finally, we observe that $\Lambda_k'$ is in a contact neighborhood of a loose Legendrian in $(S^{2n-1}, \xi_{std})$ and is primitive in its homology. By the previous paragraph,  $\Lambda_k \subset (S^{n-1}\times S^n, \xi_{std})$ is in a contact neighborhood of the loose Legendrian $S^{n-1} \times \{p\} $ and is primitive in its homology. The Legendrian $S^{n-1} \times \{p\}$ is isotopic to the Legendrian $B$  obtained by stabilizing $A$ and taking a small Reeb push-off; so we assume from the start that $\Lambda_k$ is in a neighborhood of $B$, is primitive in $H_{n-1}(B; \mathbb{Z})$, and is disjoint from $A$. So the extension $\Lambda_k'$ of $\Lambda_k$ is in a neighborhood of the extension $B'$ of $B$ to $(S^{2n-1}, \xi_{std})$ and is primitive in $H_{n-1}(B'; \mathbb{Z})$. 		
		Since $B$ is loose in the complement of $A$, $B' \subset (S^{2n-1}, \xi_{std})$ is a loose Legendrian, in fact the loose Legendrian unknot, which proves the claim.  
	\end{proof}

	\bibliographystyle{abbrv}
	\bibliography{sources}

\end{document}